\definecolor{darkred}{rgb}{0.7,0.1,0.1}
\definecolor{darkblue}{rgb}{0.1,0.1,0.4}
\definecolor{darkgrey}{rgb}{0.5,0.5,0.5}
\numberwithin{equation}{section}
\theoremstyle{plain}
\newtheorem{thm}{Theorem}[section]
\newtheorem{lem}[thm]{Lemma}
\newtheorem{prop}[thm]{Proposition}
\newtheorem{cor}[thm]{Corollary}
\newtheorem{definition}[thm]{Definition}
\theoremstyle{remark}
\newtheorem{remark}[thm]{Remark}
\theoremstyle{plain}
\newcommand{\hyp}[1]{$C^{2}$-hypersurface as in Definition~\ref{definition_hypersurface}}
\DeclareMathOperator\ran{ran}
\newcommand{\dom}{\mathrm{dom}\,}
\begin{document}
\title[]{On the spectral properties of Dirac operators with electrostatic \boldmath{$\delta$}-shell interactions}
\author[J. Behrndt]{Jussi Behrndt}
\address{Institut f\"{u}r Numerische Mathematik\\
Technische Universit\"{a}t Graz\\
 Steyrergasse 30, 8010 Graz, Austria\\
E-mail: {\tt behrndt@tugraz.at}}

\author[P. Exner]{Pavel Exner}
\address{Doppler Institute for Mathematical Physics and Applied Mathematics\\ 
Czech Technical University in Prague\\ B\v{r}ehov\'{a} 7, 11519 Prague, Czech Republic,
{\rm and}
Department of Theoretical Physics\\
Nuclear Physics Institute, Czech Academy of Sciences, 
25068 \v{R}e\v{z}, Czech Republic\\
E-mail: {\tt exner@ujf.cas.cz}
}

\author[M. Holzmann]{Markus Holzmann}
\address{Institut f\"{u}r Numerische Mathematik\\
Technische Universit\"{a}t Graz\\
 Steyrergasse 30, 8010 Graz, Austria\\
E-mail: {\tt holzmann@math.tugraz.at}}

\author[V. Lotoreichik]{Vladimir Lotoreichik}
\address{
Department of Theoretical Physics\\
Nuclear Physics Institute, Czech Academy of Sciences, 
25068 \v{R}e\v{z}, Czech Republic\\
E-mail: {\tt lotoreichik@ujf.cas.cz}
}

\begin{abstract}
In this paper the spectral properties of Dirac operators $A_\eta$
with electrostatic $\delta$-shell interactions of constant strength $\eta $ 
supported on compact smooth surfaces in $\mathbb{R}^3$ are studied.
Making use of boundary triple techniques a Krein type resolvent formula and a Birman-Schwinger principle are obtained. 
With the help of these tools some spectral, scattering, and asymptotic
properties of $A_\eta$ are investigated. In particular, it turns out that the discrete spectrum
of $A_\eta$ inside the gap of the essential spectrum is finite, the difference of the third powers of the resolvents of 
$A_\eta$ and the free Dirac operator $A_0$
is trace class, and in the nonrelativistic limit $A_\eta$ converges  in the norm 
resolvent sense to a Schr\"odinger operator
with an electric $\delta$-potential of strength~$\eta$.
\end{abstract}

\keywords{Dirac operator; existence and completeness of wave operators; finite discrete spectrum;
nonrelativistic limit; quasi boundary triple; self-adjoint extension; shell interaction}

\subjclass[2010]{Primary 81Q10; Secondary 35Q40} 
\maketitle

\section{Introduction} \label{section_introduction}

Singular $\delta$-interactions are often used as idealized replacements 
for strongly localized electric potentials;
the spectral data, scattering properties, and the location of resonances 
for the original operator can be
deduced then approximately. While Schr\"odinger operators with
$\delta$-interactions supported on manifolds of small co-dimensions were investigated extensively, cf. the
monographs \cite{AGHH05, BK13, EK15}
and the review article \cite{E08},
much less attention was paid to Dirac operators with $\delta$-interactions.

Let us choose units such that $\hbar = 1$ and denote the speed of 
light by $c$. It is well-known that
the free Dirac operator 
\begin{equation*}
     A_0 := -i c \sum_{j = 1}^3 \alpha_j \partial_j + m c^2 \beta = -i c 
\alpha \cdot \nabla + m c^2 \beta,
     \qquad \dom A_0 = H^1(\mathbb{R}^3; \mathbb{C}^4),
\end{equation*}
where $m > 0$ and $\alpha = (\alpha_1, \alpha_2, \alpha_3)$ and $\beta$ 
denote the Dirac matrices \eqref{def_Dirac_matrices}, is self-adjoint in $L^2(\mathbb{R}^3; \mathbb{C}^4)$ and
that
\begin{equation*}
 \sigma(A_0)= (-\infty, -m c^2] 
\cup [m c^2, \infty).
\end{equation*}
The free Dirac operator
describes the motion of a spin-$\frac{1}{2}$ particle with mass $m$ 
in vacuum taking relativistic aspects into account;
cf. \cite{T92}. In the following let $\Sigma$ be the boundary of a bounded $C^\infty$-smooth domain 
$\Omega \subset \mathbb R^3$. 
Then the Dirac operator with an electrostatic $\delta$-shell interaction supported 
on $\Sigma$ with
constant interaction strength $\eta \in \mathbb{R}$ is formally given by
\begin{equation*}
   A_\eta = -i c \alpha \cdot \nabla + m c^2 \beta + \eta \delta_\Sigma,
\end{equation*}
where $\delta_\Sigma$ stands for the $\delta$-distribution supported on the surface $\Sigma$ acting as
\begin{equation*}
  \delta_\Sigma f = \frac{1}{2} \big( f_+|_\Sigma + f_-|_\Sigma \big); 
  \quad f_+ = f|_{\Omega}, ~f_-=f|_{\mathbb{R}^3 \setminus \overline{\Omega}}.
\end{equation*}
Note that $A_\eta$ is defined on functions that are weakly differentiable away from $\Sigma$, the 
$\delta$-interaction is then modeled, as usual, by a jump condition for these functions on~$\Sigma$.
It is the main objective of this paper to analyze the properties of Dirac operators with 
electrostatic $\delta$-shell interactions by applying the abstract 
technique of quasi boundary triples and their Weyl functions from extension theory of symmetric operators. Our investigations and some of our
results are inspired by the very recent contributions \cite{AMV14, AMV15, AMV16} in this area.

The mathematical study of Dirac operators with $\delta$-interactions started 
in the 1980s. One dimensional Dirac operators with singular point 
interactions were studied in
\cite{GS87}; cf. also \cite[Appendix~J]{AGHH05}, \cite{CMP13} and the 
references therein, and 
the first mathematically rigorous contribution on a Dirac operator in $\mathbb{R}^3$ with 
a $\delta$-shell interaction
supported on a sphere was \cite{DES89}.
Using a decomposition into spherical harmonics and the results on the 
one dimensional Dirac operator
with singular interactions
self-adjointness of $A_\eta$ and a number of spectral properties were shown.
The interest in the topic arose again with the discovery of a family of 
artificial
materials where the Dirac equation can be approximately deduced from 
Schr\"odinger's equation
\cite{WBSB14}. 
 From a mathematical point of view the investigation of Dirac operators 
with
$\delta$-interactions supported on more general surfaces in $\mathbb{R}^3$
was initiated recently in \cite{AMV14, AMV15, AMV16}.

Our motivation is to show how the concept of quasi boundary triples and their Weyl functions can 
be used to
introduce and study Dirac operators with electrostatic $\delta$-shell interactions. 
Quasi boundary triples are a slight generalization of the concept of (ordinary) boundary triples, which 
is a powerful tool in the analysis of self-adjoint extensions of symmetric operators 
\cite{B76, BGP08,DM91,GG91, K75}. Quasi boundary triples
were originally introduced in \cite{BL07} for the study of
elliptic partial differential operators, they were applied in the investigation of 
Schr\"odinger operators
with singular interactions in \cite{BLL13}, and they are easily applicable 
also to Dirac operators since in contrast to form methods no semi-boundedness is required.
In this context let us briefly explain our approach to define the Dirac operator 
$A_\eta$ with an electrostatic
$\delta$-shell interaction. Let $S$ be the restriction of the free Dirac operator $A_0$ to functions 
that vanish at~$\Sigma$ and let $S^*$ be its adjoint.
We then construct an operator $T$ which is dense in~$S^*$ and define
the $\delta$-operators $A_\eta$ as restrictions of $T$ to functions that 
satisfy certain jump conditions on $\Sigma$;
cf. Section \ref{section_def_A_eta} for details. 
For $\eta\not=\pm 2 c$ we conclude the self-adjointness of $A_\eta$ and a Krein type formula relating the
resolvent of $A_\eta$ with the resolvent of the free Dirac operator $A_0$ from the general theory of quasi boundary triples
and their Weyl functions. We remark that the self-adjointness of $A_\eta$ for $\eta\not=\pm 2 c$ is also proven in
\cite{AMV14} using another approach.


Let us describe the main results of this paper.
First, we discuss the spectral properties of the Dirac operator with an 
electrostatic $\delta$-shell interaction. Making use of some special properties of the Weyl function in the present situation 
the next result can be viewed as a consequence of the abstract resolvent formula and the corresponding Birman-Schwinger principle; for more details
and additional results see Theorem~\ref{theorem_Krein}.

\begin{thm} \label{theorem_1}
   Let $\eta \in \mathbb{R} \setminus \{ \pm 2 c \}$ and let $A_\eta$ be 
the Dirac operator with an electrostatic
   $\delta$-shell interaction of strength $\eta$. Then the essential 
spectrum is given by
   \begin{equation*}
     \sigma_{\mathrm{ess}}(A_\eta) = (-\infty, -m c^2] 
\cup [m c^2, \infty)
   \end{equation*}
   and the discrete spectrum in the gap $(-mc^2,mc^2)$ is finite, that is, 
   \begin{equation*}
    \sharp\bigl\{\sigma_{\rm d}(A_\eta)\cap(-mc^2,mc^2)\bigr\}<\infty.
   \end{equation*}
\end{thm}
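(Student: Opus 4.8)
The plan is to reduce the statement to a standard spectral analysis of the Birman--Schwinger operator associated with $A_\eta$. By the Krein type resolvent formula and the Birman--Schwinger principle (the contents of Theorem~\ref{theorem_Krein}, which we are free to invoke), a point $\lambda \in (-mc^2,mc^2) \subset \rho(A_0)$ belongs to $\sigma_{\mathrm{d}}(A_\eta)$ if and only if $0$ is an eigenvalue of the associated Weyl function $M(\lambda)$, or equivalently of an operator of the form $I + \eta \mathcal{C}_\lambda$ on $L^2(\Sigma;\mathbb{C}^4)$, where $\mathcal{C}_\lambda$ is built from (the trace/Cauchy-type operator associated with) the resolvent of the free Dirac operator. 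The essential spectrum claim $\sigma_{\mathrm{ess}}(A_\eta) = (-\infty,-mc^2]\cup[mc^2,\infty)$ should follow from the fact that the resolvent difference of $A_\eta$ and $A_0$ is compact (a by-product of the Krein formula, since the correction term factors through operators on the compact surface $\Sigma$ together with a smoothing), so that $\sigma_{\mathrm{ess}}(A_\eta)=\sigma_{\mathrm{ess}}(A_0)$; I would state this first and then concentrate on finiteness of the discrete spectrum.

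For the finiteness, the key step is an analyticity and limiting argument in the spectral parameter. First I would show that $\lambda \mapsto \mathcal{C}_\lambda$ is a holomorphic family of bounded (in fact compact) operators on $L^2(\Sigma;\mathbb{C}^4)$ for $\lambda$ in a complex neighborhood of the closed gap $[-mc^2,mc^2]$ --- compactness coming from the smoothing properties of the integral kernel of $(A_0-\lambda)^{-1}$ restricted to the smooth compact surface $\Sigma$ (the kernel has only a weak, integrable singularity on the diagonal, so the operator is Hilbert--Schmidt, or one uses mapping properties into $H^{1/2}(\Sigma)$ together with the compact embedding $H^{1/2}(\Sigma)\hookrightarrow L^2(\Sigma)$). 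Then on any compact subinterval $[-mc^2+\varepsilon, mc^2-\varepsilon]$, the eigenvalues of $A_\eta$ in that subinterval correspond to the zeros of $\lambda \mapsto \det$-type condition, i.e. to $\lambda$ where $I+\eta\mathcal{C}_\lambda$ is not boundedly invertible; by analytic Fredholm theory this set is discrete in the interior and, since it is also closed, it is finite on the compact subinterval --- provided $I+\eta\mathcal{C}_\lambda$ is invertible for at least one $\lambda$ (which holds, e.g., for $\lambda$ with large imaginary part, or because $A_\eta$ is self-adjoint so the set of such $\lambda$ cannot be all of a neighborhood).

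The remaining and genuinely delicate point --- the one I expect to be the main obstacle --- is the behaviour near the two endpoints $\pm mc^2$. Near $\lambda = \pm mc^2$ the free resolvent degenerates (one is at the edge of the essential spectrum of $A_0$), so one must check that no infinite sequence of eigenvalues of $A_\eta$ accumulates at $\pm mc^2$. The plan here is to show that $\mathcal{C}_\lambda$ extends continuously (even with some Hölder modulus) up to $\lambda = \pm mc^2$ as a compact operator on $L^2(\Sigma;\mathbb{C}^4)$; this should be feasible because the singular part of the Dirac resolvent kernel at the mass shell is milder in three dimensions than in the massless or Laplacian case, and the surface integration over the compact $\Sigma$ controls it. Granting the continuous (compact-operator-valued) extension, analytic Fredholm theory applies on a full open neighborhood of $[-mc^2,mc^2]$ (or at least the endpoint analysis is reduced to checking $I+\eta\mathcal{C}_{\pm mc^2}$ separately), and one concludes that the zeros form a discrete subset of a compact set, hence are finite in number. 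If the continuous extension up to the endpoints fails or is only conditional, the fallback is a two-sided estimate showing $\|\mathcal{C}_\lambda\| \to 0$ is false but $\mathcal{C}_\lambda$ stays uniformly bounded with a uniformly controlled tail of small singular values, so that $I + \eta \mathcal{C}_\lambda$ has a uniformly bounded number of eigenvalues crossing $-1/\eta$ as $\lambda\to\pm mc^2$; combining the interior count with this endpoint bound yields the finiteness asserted in the theorem.
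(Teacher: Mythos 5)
Your reduction of the statement to the behaviour of the Birman--Schwinger operator and your treatment of the essential spectrum are both in line with the paper: the resolvent difference is indeed compact because the $\gamma$-field factors through the compact embedding $H^{1/2}(\Sigma)\hookrightarrow L^2(\Sigma)$, so $\sigma_{\mathrm{ess}}(A_\eta)=\sigma_{\mathrm{ess}}(A_0)$. However, the core of your argument for finiteness of the discrete spectrum rests on a false premise, and the way you propose to handle the endpoints $\pm mc^2$ does not actually close the argument.

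The critical flaw is the claim that the Birman--Schwinger operator $\mathcal{C}_\lambda=M(\lambda)$ is a \emph{compact} operator on $L^2(\Sigma;\mathbb{C}^4)$. It is not. The integral kernel $G_\lambda(x-y)$ of the Dirac resolvent behaves like $|x-y|^{-2}$ on the diagonal, which is \emph{not} integrable with respect to surface measure on a two-dimensional hypersurface; $M(\lambda)$ is a genuinely singular (principal-value, Cauchy-type) integral operator. In fact Proposition~\ref{proposition_spectrum_Weyl_function}~(i) shows $M(\lambda)^2=\tfrac{1}{4c^2}I_4+K(\lambda)$ with $K(\lambda)$ compact, which forces $\pm\tfrac{1}{2c}\in\sigma_{\mathrm{ess}}(M(\lambda))$, so $M(\lambda)$ is bounded but non-compact. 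This means analytic Fredholm theory cannot be applied to $I+\eta M(\lambda)$ directly: the value $-1/\eta$ could in principle be an accumulation point of eigenvalues of $M(\lambda)$, and the ``discrete zero set'' conclusion you invoke is simply not available. (Compactness of $\gamma(\lambda)$ into $L^2(\mathbb{R}^3)$, which you correctly observe, does not transfer to $M(\lambda)=\Gamma_1\gamma(\lambda)$ acting on the boundary space.) Moreover, since $\sqrt{\lambda^2/c^2-(mc)^2}$ has branch points at $\pm mc^2$, there is no holomorphic continuation of $M(\cdot)$ past the gap, only a norm-continuous one, so even the interior-plus-endpoint version of your strategy needs repair.

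The paper instead exploits the explicit spectral structure of $M(\lambda)$: its spectrum is $\{\pm\tfrac{1}{2c}\}\cup\{\mu_n(\lambda)\}\cup\{-\tfrac{1}{4c^2\mu_n(\lambda)}\}$ with $\mu_n:[ -mc^2,mc^2]\to\bigl[\tfrac{1}{4c^2M_0},M_0\bigr]$ continuous and non-decreasing, and $\tfrac{1}{2c}$ is the \emph{only} possible accumulation point of $(\mu_n(\lambda))$ for fixed $\lambda$. Combined with the norm-continuous extension of $M(\cdot)$ to $[-mc^2,mc^2]$ and the monotonicity furnished by \eqref{bittesehr}, an accumulation of eigenvalues of $A_\eta$ at $\pm mc^2$ would produce infinitely many $\mu_{k(n)}$ pinned at $-1/\eta$, hence accumulation points of $\sigma(M(\pm mc^2))$ away from $\tfrac{1}{2c}$ unless $\eta=-2c$ --- a contradiction. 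Your ``fallback'' gestures towards this but does not supply the two decisive ingredients: the symmetry $\mu\in\sigma_{\mathrm{p}}(M(\lambda))\Leftrightarrow -\tfrac{1}{4c^2\mu}\in\sigma_{\mathrm{p}}(M(\lambda))$ (which localizes eigenvalues away from $\tfrac{1}{2c}$) and the strict monotonicity of the eigenvalue branches in $\lambda$. Without them, the endpoint accumulation cannot be excluded.
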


The next result on the trace class property of 
the difference of the third powers of the resolvents 
of $A_\eta$ and
$A_0$ has important consequences for mathematical
scattering theory. In particular, it follows that the wave operators 
for the
scattering system $\{ A_\eta, A_0 \}$ exist and are complete and that the 
absolutely continuous parts of $A_\eta$ and
$A_0$ are unitarily equivalent. For more details see Theorem~\ref{theorem_resolvent_power_difference}, where also
a trace formula in terms of the Weyl function and its derivatives is provided.

\begin{thm} \label{theorem_2}
   Let $\eta \in \mathbb{R} \setminus \{ \pm 2 c \}$, let $A_\eta$ be 
the Dirac operator with an electrostatic
   $\delta$-shell interaction and let $\lambda \in \rho(A_\eta)\cap\rho(A_0)$.
   Then the operator
   \begin{equation*}
     (A_\eta - \lambda)^{-3} - (A_0 - \lambda)^{-3}
   \end{equation*}
   belongs to the trace class ideal.
\end{thm}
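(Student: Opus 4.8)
The plan is to combine the Krein-type resolvent formula of Theorem~\ref{theorem_Krein} with a Schatten--von Neumann analysis of the objects appearing in it. By that theorem the resolvent difference has the factorized form
\begin{equation*}
  (A_\eta-\lambda)^{-1}-(A_0-\lambda)^{-1}=\gamma(\lambda)\,\Lambda_\eta(\lambda)\,\gamma(\bar\lambda)^*,\qquad \lambda\in\rho(A_\eta)\cap\rho(A_0),
\end{equation*}
where $\gamma$ is the $\gamma$-field of the underlying quasi boundary triple, acting between $L^2(\Sigma;\mathbb C^4)$ and $L^2(\mathbb R^3;\mathbb C^4)$ as a single-layer-type potential, and $\Lambda_\eta(\lambda)$ is a bounded and boundedly invertible operator in $L^2(\Sigma;\mathbb C^4)$ built from $\eta$ and the Weyl function $M(\lambda)$; in particular the right-hand side is holomorphic on $\rho(A_\eta)\cap\rho(A_0)$. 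Since $(A_\eta-\lambda)^{-3}-(A_0-\lambda)^{-3}=\tfrac12\,\partial_\lambda^2\bigl[(A_\eta-\lambda)^{-1}-(A_0-\lambda)^{-1}\bigr]$, I would differentiate the factorization twice. Using the standard identities $\partial_\lambda\gamma(\lambda)=(A_0-\lambda)^{-1}\gamma(\lambda)$, $\partial_\lambda\bigl(\gamma(\bar\lambda)^*\bigr)=\gamma(\bar\lambda)^*(A_0-\lambda)^{-1}$ and $M^{(k)}(\lambda)=k!\,\gamma(\bar\lambda)^*(A_0-\lambda)^{-(k-1)}\gamma(\lambda)$ for $k\ge1$, together with the fact that the $\lambda$-derivatives of $\Lambda_\eta$ are expressible through $\Lambda_\eta$ and derivatives of $M$, the Leibniz rule turns $\partial_\lambda^2$ of the factorization into a finite sum of terms of the form
\begin{equation*}
  (A_0-\lambda)^{-j}\,\gamma(\lambda)\;B_{j,k,\ell}(\lambda)\;\gamma(\bar\lambda)^*\,(A_0-\lambda)^{-\ell},\qquad j+k+\ell=2,
\end{equation*}
where $B_{j,k,\ell}(\lambda)$ is bounded in $L^2(\Sigma;\mathbb C^4)$ if $k=0$, and for $k\ge1$ is a product of copies of $\Lambda_\eta(\lambda)$ interspersed with derivatives of $M$ of total order $k$.

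The second, and principal, step is to quantify the smoothing of the three types of factors. Here one uses that the single-layer-type potential $\gamma(\lambda)$ maps $L^2(\Sigma;\mathbb C^4)$ into $H^{1/2}$ on each side of $\Sigma$, hence into $H^{1/2-\varepsilon}(\mathbb R^3;\mathbb C^4)$ for every $\varepsilon>0$; that $\gamma(\bar\lambda)^*$ factors as a boundary trace operator applied to $(A_0-\bar\lambda)^{-1}$ and therefore maps $H^t(\mathbb R^3;\mathbb C^4)$ into $H^{t+1/2}(\Sigma;\mathbb C^4)$; that $(A_0-\lambda)^{-1}$ maps $H^t(\mathbb R^3;\mathbb C^4)$ into $H^{t+1}(\mathbb R^3;\mathbb C^4)$; and, combining these, that $M^{(k)}(\lambda)$ maps $L^2(\Sigma;\mathbb C^4)$ into $H^{k-\varepsilon}(\Sigma;\mathbb C^4)$ for $k\ge1$. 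Since $\Sigma$ is a compact $C^\infty$ surface of dimension two, the embedding $H^s(\Sigma;\mathbb C^4)\hookrightarrow L^2(\Sigma;\mathbb C^4)$ belongs to $\mathfrak{S}_p$ whenever $ps>2$; testing $(A_0-\lambda)^{-j}\gamma(\lambda)$ and $\gamma(\bar\lambda)^*(A_0-\lambda)^{-\ell}$ against their adjoints --- which reduces matters to operators on $L^2(\Sigma;\mathbb C^4)$ smoothing by essentially $1+2j$, resp.\ $1+2\ell$, orders --- then yields $(A_0-\lambda)^{-j}\gamma(\lambda)\in\mathfrak{S}_q$ and $\gamma(\bar\lambda)^*(A_0-\lambda)^{-\ell}\in\mathfrak{S}_{q'}$ for all $q>4/(1+2j)$ and $q'>4/(1+2\ell)$, whereas $B_{j,k,\ell}(\lambda)$ is bounded for $k=0$ and lies in $\mathfrak{S}_p$ for every $p>2/k$ when $k\ge1$.

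Combining these facts by the H\"older inequality for Schatten ideals, each of the finitely many terms lies in $\mathfrak{S}_r$ with
\begin{equation*}
  \frac1r<\frac{1+2j}{4}+\frac k2+\frac{1+2\ell}{4}=\frac{2+2(j+k+\ell)}{4}=\frac32,
\end{equation*}
the bound being independent of how the two derivatives are distributed among the factors; choosing the individual Schatten exponents close enough to their infima one may take $r<1$, and hence every term --- and therefore $(A_\eta-\lambda)^{-3}-(A_0-\lambda)^{-3}$ itself --- belongs to the trace class $\mathfrak{S}_1$. Running the same count for the difference of $n$-th powers gives a sum over $j+k+\ell=n-1$ and the bound $1/r<n/2$; this is precisely why the third power is needed, since for $n=1$ one only reaches the Hilbert--Schmidt class and $n=2$ is the borderline case. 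The trace formula stated in Theorem~\ref{theorem_resolvent_power_difference} then follows by taking the trace in the above decomposition and using cyclicity.

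The step I expect to be the actual obstacle is the second one: pinning down the precise smoothing orders of the single-layer potential, of its $\lambda$-derivatives, and of the derivatives $M^{(k)}$ of the Weyl function, and in particular verifying that powers of the free resolvent on $\mathbb R^3$ genuinely improve the Schatten class of operators that are ``anchored'' on the two-dimensional surface $\Sigma$ --- this forces one to pass through the global Sobolev scale on $\mathbb R^3$ and back to $\Sigma$ via the trace, all while keeping track of the matrix, first-order structure of the Dirac operator. Everything after that is bookkeeping with the H\"older inequality for Schatten norms and the Weyl asymptotics on $\Sigma$.
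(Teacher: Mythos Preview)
Your proposal is correct and follows essentially the same route as the paper: differentiate the Krein resolvent formula twice via the Leibniz rule, establish Schatten estimates for the derivatives of $\gamma(\lambda)$, $\gamma(\bar\lambda)^*$ and $M(\lambda)$ based on their Sobolev smoothing on the two-dimensional surface $\Sigma$, and then combine them multiplicatively. The paper packages the key estimates as $\frac{d^k}{d\lambda^k}\gamma(\lambda)\in\mathfrak{S}_{4/(2k+1),\infty}$ and $\frac{d^k}{d\lambda^k}M(\lambda)\in\mathfrak{S}_{2/k,\infty}$ in the \emph{weak} Schatten ideals (citing \cite[Lemma~4.7]{BLL13_1} for the embedding $H^s(\Sigma)\hookrightarrow L^2(\Sigma)$), and then uses the exact product rule $\mathfrak{S}_{1/x,\infty}\cdot\mathfrak{S}_{1/y,\infty}\subset\mathfrak{S}_{1/(x+y),\infty}$ to land each term in $\mathfrak{S}_{2/3,\infty}\subset\mathfrak{S}_1$; you instead work with ordinary $\mathfrak{S}_p$ classes and an $\varepsilon$-loss, arriving at the same conclusion via H\"older. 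The only point where your sketch is slightly loose is the claim that $\gamma(\lambda)$ maps into $H^{1/2-\varepsilon}(\mathbb{R}^3;\mathbb{C}^4)$, since the single-layer Dirac potential has a jump across $\Sigma$; but you correctly sidestep this by testing against the adjoint and reducing to operators on $\Sigma$, which is exactly what the paper does by working with $\gamma(\bar\lambda)^*=\Gamma_1(A_0-\lambda)^{-1}$ directly.
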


Our third and last main result in Theorem~\ref{theorem_nonrelativistic_limit} concerns the nonrelativistic limit of the Dirac 
operator with an electrostatic $\delta$-shell interaction.
We show that -- after subtracting the rest energy of the mass from the 
total energy -- $A_\eta$ converges in the norm 
resolvent sense to the Schr\"odinger operator
with an electric $\delta$-potential of strength~$\eta$ supported on 
$\Sigma$ times a projection onto the
upper components of the Dirac wave function, as~$c \rightarrow \infty$.
Hence, the Dirac operator with an electrostatic $\delta$-shell potential 
is the relativistic counterpart
of the Schr\"odinger operator with an electric $\delta$-interaction; cf. 
\cite[Chapter~6]{T92}. Since it is known that
the Schr\"odinger operator with a $\delta$-potential is a suitable 
idealized model for Schr\"odinger operators
with strongly localized regular potentials, cf. \cite{BEHL16},
the nonrelativistic limit yields a justification for the usage of 
$A_\eta$ as an idealized model
for the motion of a spin-$\frac{1}{2}$ particle in the presence of such 
a potential.
Furthermore, this theorem allows one to deduce spectral properties of $A_\eta$
for large $c$ from the well-known results on the Schr\"odinger operator with a $\delta$-potential.
Similar statements are already obtained for the one dimensional Dirac operator 
with $\delta$-interactions;
see \cite{AGHH05, CMP13, GS87}.
In a slightly simplified form
Theorem~\ref{theorem_nonrelativistic_limit} reads as follows.

\begin{thm} \label{theorem_3}
   Let $\eta \in \mathbb{R}$
   and let $A_\eta$ be the Dirac operator with an electrostatic
   $\delta$-shell interaction of strength $\eta$.
   Then, for any $\lambda \in \mathbb{C} \setminus \mathbb{R}$
   it holds
   \begin{equation*}
     \lim_{c \rightarrow \infty} \big( A_\eta - (\lambda + m c^2) \big)^{-1}
         = \left( -\frac{1}{2 m} \Delta + \eta \delta_\Sigma - \lambda 
\right)^{-1}
         \begin{pmatrix} I_2 & 0 \\ 0 & 0 \end{pmatrix},
   \end{equation*}
   where $I_2$ denotes the identity matrix in $\mathbb{C}^{2 \times 2}$
   and the convergence is in the operator norm.
\end{thm}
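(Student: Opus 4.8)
The plan is to combine the Krein-type resolvent formula of Theorem~\ref{theorem_Krein} with the classical analysis of the nonrelativistic limit of the \emph{free} Dirac operator, and then to identify the resulting limit with the known Krein formula for the Schr\"odinger operator with a $\delta$-interaction from \cite{BLL13, BEHL16}. Write $z_c := \lambda + mc^2$. Since $\lambda \in \mathbb{C} \setminus \mathbb{R}$ and $mc^2 \in \mathbb{R}$, for all sufficiently large $c$ one has $|\eta| < 2c$ and $z_c \in \rho(A_\eta) \cap \rho(A_0)$, so that
\begin{equation*}
  (A_\eta - z_c)^{-1} = (A_0 - z_c)^{-1} - \gamma(z_c)\bigl(\Theta_c + M(z_c)\bigr)^{-1}\gamma(\overline{z_c})^{*}
\end{equation*}
is available, where $\gamma(\cdot)$ and $M(\cdot)$ are the $\gamma$-field and Weyl function of the quasi boundary triple used to define $A_\eta$ in Section~\ref{section_def_A_eta}, and $\Theta_c$ is the ($\eta$- and $c$-dependent) boundary parameter encoding the electrostatic $\delta$-shell coupling. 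Since $\overline{z_c} = \overline{\lambda} + mc^2$, it then suffices to prove operator-norm convergence of the free term and of each of the three factors of the correction term, with a bound on the middle inverse that is uniform in $c$, and to recognise the limit.

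For the free term I would use $A_0^2 = -c^2\Delta + m^2 c^4$, a consequence of the anticommutation relations for the Dirac matrices, which gives
\begin{equation*}
\begin{aligned}
  (A_0 - z_c)^{-1} &= (A_0 + z_c)\bigl(-c^2\Delta + m^2 c^4 - z_c^2\bigr)^{-1} \\
   &= \bigl(-ic\,\alpha\cdot\nabla + mc^2(\beta + I_4) + \lambda\bigr)\,\tfrac{1}{c^2}\Bigl(-\Delta - 2m\lambda - \tfrac{\lambda^2}{c^2}\Bigr)^{-1},
\end{aligned}
\end{equation*}
where $I_4$ is the identity on $\mathbb{C}^4$. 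Because $\beta + I_4 = \begin{pmatrix} 2I_2 & 0 \\ 0 & 0 \end{pmatrix}$, the summand $mc^2(\beta + I_4)\cdot c^{-2}(\,\cdot\,)^{-1}$ converges in operator norm to $\bigl(-\tfrac{1}{2m}\Delta - \lambda\bigr)^{-1}\begin{pmatrix} I_2 & 0 \\ 0 & 0 \end{pmatrix}$, while the $-ic\,\alpha\cdot\nabla$ contribution is $O(1/c)$ in operator norm since $\nabla(-\Delta + \zeta)^{-1}$ is bounded, the $\lambda$-summand is $O(1/c^2)$, and replacing $(-\Delta - 2m\lambda - \lambda^2/c^2)^{-1}$ by $(-\Delta - 2m\lambda)^{-1}$ costs $O(1/c^2)$. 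Hence $(A_0 - z_c)^{-1} \to \bigl(-\tfrac{1}{2m}\Delta - \lambda\bigr)^{-1}\begin{pmatrix} I_2 & 0 \\ 0 & 0 \end{pmatrix}$ in operator norm at rate $O(1/c)$; this is the familiar statement, cf.~\cite[Chapter~6]{T92}.

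The heart of the matter is the correction term. Here $\gamma(z_c)$ is the single-layer-type operator $\varphi \mapsto \int_\Sigma G_{z_c}(\cdot - y)\varphi(y)\,d\sigma(y)$ formed from the integral kernel $G_{z_c}$ of $(A_0 - z_c)^{-1}$, and the factorisation above shows that $G_{z_c}$ arises by applying $-ic\,\alpha\cdot\nabla + mc^2(\beta + I_4) + \lambda$ to $\tfrac{1}{c^2}$ times the Helmholtz kernel $\frac{e^{i k_c |x-y|}}{4\pi|x-y|}$ with $k_c^2 = 2m\lambda + \lambda^2/c^2$. Thus $k_c \to \sqrt{2m\lambda}$, the momentum entering the free Schr\"odinger resolvent, and after taking into account the $c$-dependent scaling built into the quasi boundary triple one verifies that the upper-left $2\times2$ block of $\gamma(z_c)$ converges, in the relevant norm, to the single-layer potential $\gamma_\Delta(\lambda)$ associated with $-\tfrac{1}{2m}\Delta - \lambda$, whereas the blocks carrying $\alpha\cdot\nabla$ (the lower components) are $O(1/c)$ and vanish. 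The same factorisation governs $M(z_c)$: its leading $\beta$-part reproduces, through the standard jump relations for single-layer potentials, the boundary single-layer operator $S_\lambda$ of $-\tfrac{1}{2m}\Delta - \lambda$ on the upper components, while the Cauchy-singular off-diagonal contributions of $\alpha\cdot\nabla$ are absorbed by the $1/c$-scaling in $\Theta_c$; moreover $\Theta_c$ converges, so that $\Theta_c + M(z_c)$ tends to a boundedly invertible operator, and the nonreal parameter $\lambda$ furnishes a bound on $(\Theta_c + M(z_c))^{-1}$ uniform in $c$. Assembling the pieces, the correction term converges in operator norm to a combination of $\gamma_\Delta(\lambda)$, its adjoint, $S_\lambda$ and $\eta$ acting on the upper components that is precisely the correction appearing in the Krein resolvent formula for $-\tfrac{1}{2m}\Delta + \eta\delta_\Sigma$ from \cite{BLL13, BEHL16}; hence it equals $\bigl[(-\tfrac{1}{2m}\Delta + \eta\delta_\Sigma - \lambda)^{-1} - (-\tfrac{1}{2m}\Delta - \lambda)^{-1}\bigr]\begin{pmatrix} I_2 & 0 \\ 0 & 0 \end{pmatrix}$. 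Adding this to the limit of the free term gives the assertion, and inspecting the estimates yields convergence at rate $O(1/c)$, which is the full content of Theorem~\ref{theorem_nonrelativistic_limit}.

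I expect the main obstacle to be exactly this middle step: simultaneously tracking the $c$-dependence of the $\gamma$-field, the Weyl function and the boundary parameter $\Theta_c$, and showing that the genuinely singular, off-diagonal pieces generated by $-ic\,\alpha\cdot\nabla$ — a priori of order $c$ — are annihilated in the operator-norm limit after the boundary-triple rescaling, that the lower components of the $\gamma$-field disappear, and that $(\Theta_c + M(z_c))^{-1}$ stays bounded uniformly in $c$. Once these uniform bounds and limits are established, the remaining steps are short explicit computations or direct citations.
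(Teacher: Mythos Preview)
Your overall strategy---apply the Krein formula of Theorem~\ref{theorem_Krein}, take the limit of each factor, and match the result against the Krein formula for $-\Delta_\eta$---is exactly the paper's route. The free-term analysis via $A_0^2=-c^2\Delta+m^2c^4$ is also fine and is essentially what underlies the classical result \cite[Theorem~6.1]{T92}.

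There is, however, a genuine misconception in your treatment of the correction term. In the quasi boundary triple of Theorem~\ref{theorem_triple} and Definition~\ref{definition_delta_op} the boundary parameter is simply $B=\eta I_4$; the Krein formula reads
\[
  (A_\eta - z_c)^{-1}=(A_0 - z_c)^{-1}-\gamma(z_c)\bigl(I_4+\eta M(z_c)\bigr)^{-1}\eta\,\gamma(\overline{z_c})^{*},
\]
so there is no $c$-dependent $\Theta_c$ and no ``$1/c$-scaling in $\Theta_c$'' to absorb anything. The Cauchy-singular off-diagonal piece of $M(z_c)$, generated by the term $\frac{i(\alpha\cdot x)}{c|x|^2}\cdot\frac{1}{4\pi|x|}$ in the Dirac kernel $G_{z_c}$, already carries an explicit factor $1/c$: it equals $\tfrac{1}{c}$ times a strongly singular integral operator that is bounded on $L^2(\Sigma;\mathbb{C}^4)$ independently of $c$ (cf.\ the $U_4$-term in the proof of Proposition~\ref{proposition_limit_gamma_M}). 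So this piece vanishes by itself, and the remaining parts of $M(z_c)$ are weakly singular and converge to $\widetilde{M}(\lambda)P_+$ by direct kernel estimates. Once you know $M(z_c)\to\widetilde{M}(\lambda)P_+$ in norm, the uniform bound on $\bigl(I_4+\eta M(z_c)\bigr)^{-1}$ follows from the invertibility of $I_4+\eta\widetilde{M}(\lambda)P_+$ (Theorem~\ref{theorem_Schroedinger}) and standard perturbation of bounded inverses.

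What your sketch does not do---and what constitutes the actual work in the paper---is prove the four norm estimates \eqref{conv_res_free}--\eqref{conv_M} of Proposition~\ref{proposition_limit_gamma_M}. These are obtained by writing out $G_{\lambda+mc^2}(x)-K_\lambda(x)P_+$ explicitly, splitting it into pieces with controlled singularities, and applying Schur-type bounds (Propositions~\ref{proposition_integral_operators1}--\ref{proposition_integral_operators3}). Your phrases ``one verifies'' and ``after taking into account the $c$-dependent scaling'' are placeholders for precisely this analysis; without it the argument is incomplete.
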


Finally, let us familiarize the reader with the structure of this paper. In 
Section~\ref{section_quasi_boundary_triples}
we provide a brief introduction to the general theory of quasi boundary triples and their Weyl functions.
The abstract results are formulated in the way they are needed to prove our main results.
Then, in Section~\ref{section_preliminaries} we introduce and 
investigate a quasi boundary triple which is suitable
to define and study the Dirac operator $A_\eta$ with an
electrostatic $\delta$-shell potential.
Using this quasi boundary triple we conclude the self-adjointness of $A_\eta$ and derive
a Krein type resolvent formula, which is an important tool in the proofs 
of our main results in Section~\ref{section_def_A_eta} and 
Section~\ref{section_nonrelativistic_limit}.
Finally, we have added the short Appendix~\ref{appa} on criteria for the boundedness of certain 
integral operators to ensure a self-contained presentation.

\subsection*{Notations}
The identity matrix in $\mathbb{C}^{n \times n}$ is denoted by
$I_n$. The Dirac 
matrices $\alpha_1, \alpha_2, \alpha_3$ and $\beta$ are 
\begin{equation} \label{def_Dirac_matrices}
   \alpha_j := \begin{pmatrix} 0 & \sigma_j \\ \sigma_j & 0 \end{pmatrix}
   \quad \text{and} \quad \beta := \begin{pmatrix} I_2 & 0 \\ 0 & -I_2 
\end{pmatrix},
\end{equation}
where $\sigma_j$, $j \in\{ 1, 2, 3 \}$, are the Pauli spin matrices
\begin{equation*}
   \sigma_1 := \begin{pmatrix} 0 & 1 \\ 1 & 0 \end{pmatrix}, \qquad
   \sigma_2 := \begin{pmatrix} 0 & -i \\ i & 0 \end{pmatrix}, \qquad
   \sigma_3 := \begin{pmatrix} 1 & 0 \\ 0 & -1 \end{pmatrix}.
\end{equation*}
Note that the Dirac matrices satisfy the anti-commutation relation
\begin{equation}\label{eq:commutation}
	\alpha_j\alpha_k + \alpha_k\alpha_j = 2\delta_{jk}I_4,\qquad
	j,k\in\{0,1,2,3\},
\end{equation}
with the convention $\alpha_0 := \beta$.

For vectors $x = (x_1, x_2, x_3)^{\top}$ we sometimes use the notation
$\alpha \cdot x := \sum_{j=1}^3 \alpha_j x_j$. 
Furthermore, $m$ and $c$ denote positive constants that stand for 
the mass of the particle and
the speed of light, respectively. The square root $\sqrt{\cdot}$ is fixed by $\sqrt{\lambda}\geq 0$
for $\lambda\geq 0$ and by $\mathrm{Im}\sqrt{\lambda}>0$ for $\lambda\in\mathbb C\setminus [0,\infty)$.

Throughout the text $\Sigma$ is the boundary of a bounded $C^\infty$-smooth domain 
in~$\mathbb R^3$ and $\sigma$ denotes 
the Hausdorff measure on $\Sigma$. We shall mostly work with the $L^2$-spaces 
$L^2(\mathbb{R}^3; \mathbb{C}^n)$ and $L^2(\Sigma; \mathbb{C}^n)$
of $\mathbb C^n$-valued square integrable functions, and more generally 
with $L^2(X; \mu; \mathbb{C}^n)$, where $(X,\mu)$ is a measure space.
We denote by
$C_c^\infty(\Omega; \mathbb{C}^n)$ 
the space of 
$\mathbb C^n$-valued smooth functions with 
compact support in an open set $\Omega \subset \mathbb{R}^3$, 
$H^k(\mathbb R^3; \mathbb{C}^n)$ stands for the usual Sobolev space of $k$-times 
weakly differentiable 
functions and $H^1_0(\mathbb R^3\setminus\Sigma; \mathbb{C}^n)$ is the closure of
$C_c^\infty(\mathbb R^3\setminus \Sigma; \mathbb{C}^n)$
with respect to the $H^1$-norm. In a similar manner, Sobolev spaces on $\Sigma$ are denoted by
$H^s(\Sigma; \mathbb{C}^n)$, $s\geq 0$.

For Hilbert spaces $X$ and $Y$ we denote by $\mathfrak B(X,Y)$ the space of all everywhere defined and 
bounded linear operators from $X$ to $Y$, in the case $X=Y$ 
we shall simply write $\mathfrak B(X)$.
We use $\mathfrak{S}_{p, \infty}(X, Y)$ for the weak 
Schatten--von Neumann ideal of order $p>0$.
Recall that a compact operator $K\colon X\rightarrow Y$ belongs to $\mathfrak{S}_{p, \infty}(X, Y)$,
if there exists a constant $\kappa$ such that the singular values $s_k(K)$ 
of $K$ satisfy $s_k(K) \leq \kappa k^{-1/p}$ for all $k \in \mathbb{N}$; cf. \cite{GK69} or \cite[Section 2.2]{BLL13_1}.
When no confusion can arise we will suppress the spaces $X$, $Y$ and simply write $\mathfrak{S}_{p, \infty}$. 
For a linear operator $T:X\rightarrow Y$ we denote the domain, range, and kernel by $\dom T$, $\ran T$, 
and $\ker T$, respectively. If $T$ is a closed operator in $X$ then its resolvent
set, spectrum, essential spectrum, discrete and point spectrum are denoted by 
$\rho(T)$, $\sigma(T)$, $\sigma_{\rm ess}(T)$, $\sigma_{\rm d}(T)$,
and $\sigma_{\rm p}(T)$, respectively.
Finally, $\sharp \sigma_{\rm{d}}(T)$ denotes the number of discrete eigenvalues counted with multiplicities.

\section{Quasi boundary triples and associated Weyl functions} \label{section_quasi_boundary_triples}

In this section we provide a brief introduction to boundary triple techniques in extension and 
spectral theory of symmetric and self-adjoint 
operators in Hilbert spaces. 
Here we present the necessary abstract material that is used in the formulation and proofs of our main results on 
Dirac operators with electrostatic $\delta$-shell interactions;
we refer the reader to \cite{BL07,BL12,BGP08,DHMS06,DM91,DM95,GG91} for more details, 
complete proofs and typical applications of boundary triples and their Weyl functions
in the theory of ordinary and partial differential operators.

In the following let $\mathfrak H$ be a Hilbert space with inner product $(\cdot,\cdot)_{\mathfrak H}$, let
$S$ be a densely defined closed symmetric operator in $\mathfrak H$, and let $S^*$ be the adjoint of $S$.

\begin{definition}\label{qbtdef}
Let $T$ be a linear operator in $\mathfrak H$ such that $\overline T=S^*$. 
A triple $\{\mathcal G,\Gamma_0,\Gamma_1\}$ is called a 
{\em quasi boundary triple} for $S^*$ if $(\mathcal G,(\cdot,\cdot)_{\mathcal G})$ 
is a Hilbert space and $\Gamma_0,\Gamma_1:\dom T\rightarrow\mathcal G$ are linear mappings such that
the following conditions {\rm (i)--(iii)} hold.
\begin{itemize}
 \item [{\rm (i)}] The abstract Green's identity
 \begin{equation*}
  (Tf,g)_{\mathfrak H}-(f,Tg)_{\mathfrak H}=(\Gamma_1 f,\Gamma_0 g)_{\mathcal G}-(\Gamma_0 f,\Gamma_1 g)_{\mathcal G}
 \end{equation*}
 is valid for all $f,g\in\dom T$.
 \item [{\rm (ii)}] The range of the mapping 
 $\Gamma=(\Gamma_0,\Gamma_1)^\top:\dom T\rightarrow \mathcal G\times\mathcal G$ is dense.
 \item [{\rm (iii)}] The operator $A_0:=T\upharpoonright\ker\Gamma_0$ is self-adjoint in $\mathfrak H$.
\end{itemize}
A quasi boundary triple is said to be a {\em generalized boundary triple} if $\ran\Gamma_0=\mathcal G$ and 
it is called an {\em ordinary boundary triple}
if $\ran \Gamma=\mathcal G\times\mathcal G$.
\end{definition}

The notion of quasi boundary triples was introduced in \cite{BL07} and further studied in \cite{BL12} and, 
e.g. \cite{BLL13,BLL13_1,BLL13_2}. 
It slightly extends the concepts of generalized boundary triples
from \cite{DM95} and ordinary boundary triples from \cite{B76,K75}. 
We note that the above definition of ordinary boundary triples is equivalent to the
usual definition in \cite{BGP08,DM91,GG91}; cf. \cite[Corollary 3.2]{BL07}. 
We also mention that a quasi boundary triple for $S^*$ exists if and only if $S$ admits self-adjoint extensions
in $\mathfrak H$, that is, if and only if the defect numbers $\dim\ker(S^*\pm i)$ coincide, 
and that the operator $T$ arising in Definition~\ref{qbtdef} is in general 
not unique (namely, when the defect numbers of $S$ are both infinite). 
Assume that $T\subset \overline T=S^*$ and let $\{\mathcal G,\Gamma_0,\Gamma_1\}$ 
be a quasi boundary triple for $S^*$.
Then according to \cite{BL07} one has
\begin{equation*}
 S=T\upharpoonright\bigl(\ker\Gamma_0\cap\ker\Gamma_1\bigr)
\end{equation*}
and the mapping $\Gamma=(\Gamma_0,\Gamma_1)^\top:\dom T\rightarrow\mathcal G\times\mathcal G$ is closable. 

Next we recall a variant of \cite[Theorem 2.3]{BL07} which in many situations is an efficient tool to verify that 
a certain boundary space $\mathcal G$ and  boundary mappings $\Gamma_0,\Gamma_1$  form a quasi boundary triple. 
We will make use of Theorem~\ref{theorem_guess} in the proof of Theorem~\ref{theorem_triple}.

\begin{thm} \label{theorem_guess}
Let $T$ be a linear operator in $\mathfrak H$, let $\mathcal G$ be a Hilbert space and assume that 
$\Gamma_0,\Gamma_1:\dom T\rightarrow\mathcal G$
are linear mappings which satisfy the following conditions~{\rm (i)--(iii)}.
    \begin{itemize}
    \item[(i)] The abstract Green's identity 
      \begin{equation*}
  (Tf,g)_{\mathfrak H}-(f,Tg)_{\mathfrak H}=(\Gamma_1 f,\Gamma_0 g)_{\mathcal G}-(\Gamma_0 f,\Gamma_1 g)_{\mathcal G}
      \end{equation*}
      holds for all $f,g \in \dom T$.
    \item[(ii)] The kernel and range of $\Gamma = ( \Gamma_0, \Gamma_1)^\top: \dom T \rightarrow \mathcal{G} \times \mathcal{G}$
      are dense in $\mathfrak H$ and $\mathcal G\times\mathcal G$, respectively.
    \item[(iii)] The restriction $T\upharpoonright\ker \Gamma_0$ contains a self-adjoint operator $A_0$.
  \end{itemize}
 Then 
 \begin{equation*}
  S:=T\upharpoonright\bigl(\ker\Gamma_0\cap\ker\Gamma_1\bigr)
 \end{equation*}
is a densely defined closed symmetric operator in $\mathfrak H$ and $\{\mathcal G,\Gamma_0,\Gamma_1\}$
is a quasi boundary triple for $\overline T=S^*$ such that
$A_0=T\upharpoonright\ker\Gamma_0$.
\end{thm}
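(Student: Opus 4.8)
The plan is to establish, in order, that $S$ is densely defined and symmetric, that $T\upharpoonright\ker\Gamma_0$ coincides with the self-adjoint operator $A_0$, that $\overline T\subset S^*$, and, as the main point, the reverse inclusion $S^*\subset\overline T$; closedness of $S$ then comes out as a byproduct. First I would note that $\dom S=\ker\Gamma_0\cap\ker\Gamma_1=\ker\Gamma$ is dense in $\mathfrak H$ by~(ii) and that the abstract Green identity~(i), applied to $f,g\in\dom S$ where all four boundary values vanish, gives $(Sf,g)_{\mathfrak H}=(f,Sg)_{\mathfrak H}$; hence $S$ is symmetric and $S^*$ is well defined. Applying~(i) instead to $f,g\in\ker\Gamma_0$ shows that $T\upharpoonright\ker\Gamma_0$ is a symmetric operator, and by~(iii) it contains the self-adjoint operator $A_0$; since self-adjoint operators are maximally symmetric, $T\upharpoonright\ker\Gamma_0=A_0$ --- this is the point where the statement goes slightly beyond \cite[Theorem~2.3]{BL07}. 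In particular $\mathbb C\setminus\mathbb R\subset\rho(A_0)$, and since $\dom S\subset\ker\Gamma_0=\dom A_0$ with $S=A_0$ there, $A_0$ is a self-adjoint extension of $S$, so $S\subset A_0\subset S^*$. Finally, $\overline T\subset S^*$ follows because~(i) yields $(Tf,g)_{\mathfrak H}=(f,Sg)_{\mathfrak H}$ for all $f\in\dom T$, $g\in\dom S$, i.e.\ $T\subset S^*$, and $S^*$ is closed.

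The core of the argument is the reverse inclusion $S^*\subset\overline T$, which I expect to be the main obstacle, since this is where the density of $\ran\Gamma$ is genuinely needed. Fix $\lambda\in\mathbb C\setminus\mathbb R\subset\rho(A_0)$. Splitting $f\in\dom T$ as $(A_0-\lambda)^{-1}(T-\lambda)f$ plus a remainder, and likewise for $\dom S^*$, one gets the direct sum decompositions $\dom T=\dom A_0\dotplus\ker(T-\lambda)$ and $\dom S^*=\dom A_0\dotplus\ker(S^*-\lambda)$; moreover $\Gamma_0$ vanishes on $\ker\Gamma_0=\dom A_0$ and is injective on $\ker(T-\lambda)$ (its kernel there is $\ker\Gamma_0\cap\ker(T-\lambda)=\{0\}$), so $\Gamma_0$ maps $\ker(T-\lambda)$ bijectively onto $\ran\Gamma_0$. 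The key claim is that $\ker(T-\lambda)$ is dense in $\ker(S^*-\lambda)$. To prove it, take $h\in\ker(S^*-\lambda)$ with $h\perp\ker(T-\lambda)$ and write $h=(A_0-\overline\lambda)w$ with $w\in\dom A_0=\ker\Gamma_0$ (possible since $\overline\lambda\in\rho(A_0)$); then for every $f\in\ker(T-\lambda)$ a short computation using $Tf=\lambda f$, $A_0 w=Tw$ and~(i) with $\Gamma_0 w=0$ yields $0=(f,h)_{\mathfrak H}=(\Gamma_0 f,\Gamma_1 w)_{\mathcal G}$. Since $\{\Gamma_0 f:f\in\ker(T-\lambda)\}=\ran\Gamma_0$ is dense in $\mathcal G$ (because $\ran\Gamma$ is dense in $\mathcal G\times\mathcal G$ by~(ii)), this forces $\Gamma_1 w=0$, hence $w\in\ker\Gamma_0\cap\ker\Gamma_1=\dom S$ and $h=(S-\overline\lambda)w\in\ran(S-\overline\lambda)=\bigl(\ker(S^*-\lambda)\bigr)^{\perp}$, so $h=0$. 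Thus $\ker(S^*-\lambda)=\overline{\ker(T-\lambda)}$; since $T$ restricted to $\ker(T-\lambda)$ is multiplication by $\lambda$, its closure --- multiplication by $\lambda$ on $\overline{\ker(T-\lambda)}$ --- is contained in $\overline T$, so $\overline{\ker(T-\lambda)}\subset\dom\overline T$. Combined with $\dom A_0\subset\dom T\subset\dom\overline T$ and the decomposition of $\dom S^*$, this gives $\dom S^*\subset\dom\overline T$, hence $S^*\subset\overline T$; together with the first paragraph, $\overline T=S^*$.

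It remains to record that $S$ is closed. Since $\dom S=\dom A_0\cap\ker\Gamma_1$ and $S=A_0$ there, it suffices to show that $N:=\dom A_0\cap\ker\Gamma_1$ is closed in the graph norm of the closed operator $A_0$. If $w_n\in N$ with $w_n\to w$ and $A_0 w_n\to A_0 w$, then for every $v\in\dom T$ the identity~(i) together with $\Gamma_0 w_n=\Gamma_1 w_n=0$ gives $(A_0 w_n,v)_{\mathfrak H}=(w_n,Tv)_{\mathfrak H}$; letting $n\to\infty$ and applying~(i) once more to $w$ (with $\Gamma_0 w=0$) yields $(\Gamma_1 w,\Gamma_0 v)_{\mathcal G}=0$ for all $v\in\dom T$, so $\Gamma_1 w=0$ by density of $\ran\Gamma_0$ and $w\in N$. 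Hence $S=A_0\upharpoonright N$ is closed. Collecting the pieces, $S$ is a densely defined closed symmetric operator with $S=T\upharpoonright(\ker\Gamma_0\cap\ker\Gamma_1)$, and $\{\mathcal G,\Gamma_0,\Gamma_1\}$ is a quasi boundary triple for $\overline T=S^*$ with $A_0=T\upharpoonright\ker\Gamma_0$.
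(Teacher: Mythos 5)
Your proof is correct. Note that the paper does not prove Theorem~\ref{theorem_guess}: it is stated as a variant of \cite[Theorem 2.3]{BL07} and cited without proof, so there is no in-text argument to compare against. Your self-contained derivation follows the standard route one would expect in \cite{BL07}: first upgrade ``contains $A_0$'' to equality $T\upharpoonright\ker\Gamma_0=A_0$ via the fact that a symmetric extension of a self-adjoint operator is that operator itself; then obtain $T\subset S^*$ (hence $\overline T\subset S^*$) and $T$ closable from Green's identity; and then prove $S^*\subset\overline T$ via the two direct sum decompositions $\dom T=\dom A_0\dotplus\ker(T-\lambda)$, $\dom S^*=\dom A_0\dotplus\ker(S^*-\lambda)$ together with the density of $\ker(T-\lambda)$ in $\ker(S^*-\lambda)$. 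The orthogonality computation $(f,h)=(\Gamma_0 f,\Gamma_1 w)$ for $f\in\ker(T-\lambda)$, $h=(A_0-\overline\lambda)w$ is exactly where the density of $\ran\Gamma_0$ (hence of $\ran\Gamma$) enters, and it is used a second time in your closedness argument; this matches the role condition~(ii) is expected to play. The only stylistic remark is that closedness of $S$ could also be extracted from the already-established identity $\overline T=S^*$ by observing $\overline S=S^{**}=(\overline T)^*=T^*$ and then characterizing $T^*$ by the same orthogonality computation, but your direct argument via graph convergence in $A_0$ is equally valid and arguably cleaner.
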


In the following assume that $\{\mathcal G,\Gamma_0,\Gamma_1\}$ is a quasi boundary triple for 
$\overline T=S^*$ with $A_0=T\upharpoonright\ker\Gamma_0$.
The definition of the $\gamma$-field and Weyl function associated to the quasi boundary triple
$\{\mathcal G,\Gamma_0,\Gamma_1\}$ below is based 
on the direct sum decomposition
\begin{equation} \label{decomposition}
  \dom T = \dom A_0 \dot{+} \ker(T - \lambda)=\ker\Gamma_0\dot{+} \ker(T - \lambda),\qquad\lambda\in\rho(A_0).
\end{equation}
For ordinary and generalized boundary triples the $\gamma$-field and Weyl function were introduced in \cite{DM91} and \cite{DM95}. 
The definition for quasi boundary triples
is formally the same.

\begin{definition}\label{gammdef}
 The $\gamma$-field $\gamma$ and Weyl function $M$ corresponding to a quasi boundary triple  
 $\{\mathcal G,\Gamma_0,\Gamma_1\}$ for $\overline T=S^*$ are defined by
 \begin{equation*}
  \rho(A_0) \ni \lambda\mapsto\gamma(\lambda)=\bigl(\Gamma_0\upharpoonright\ker(T-\lambda)\bigr)^{-1},
 \end{equation*}
and
 \begin{equation*}
  \rho(A_0) \ni \lambda\mapsto M(\lambda)=\Gamma_1 \bigl(\Gamma_0\upharpoonright\ker(T-\lambda)\bigr)^{-1},
 \end{equation*}
respectively.
\end{definition}

It is immediate from the Definition~\ref{gammdef} and \eqref{decomposition} that $\gamma(\lambda)$, $\lambda\in\rho(A_0)$, 
is a linear operator defined on $\ran\Gamma_0$ which maps onto $\ker(T-\lambda)$.
Since $\ran\Gamma_0=\dom\gamma(\lambda)$ is dense in $\mathcal G$ by Definition~\ref{qbtdef}~(ii)  it is clear that 
$\gamma(\lambda)$, $\lambda\in\rho(A_0)$, is a densely defined operator from $\mathcal G$
into $\mathfrak H$. It can be shown with the help of the abstract Green's identity in Definition~\ref{qbtdef}~(i) that 
\begin{equation}\label{soso}
 \gamma(\lambda)^*=\Gamma_1 (A_0-\overline\lambda)^{-1}\in\mathfrak B(\mathfrak H,\mathcal G),\quad \lambda\in\rho(A_0),
\end{equation}
and this yields $\overline{\gamma(\lambda)}=\gamma(\lambda)^{**}\in\mathfrak B(\mathcal G,\mathfrak H)$ for 
$\lambda\in\rho(A_0)$; cf. 
\cite[Proposition 2.6]{BL07} or \cite[Proposition 6.13]{BL12}. Furthermore, for $\lambda,\mu\in\rho(A_0)$ 
and $\varphi\in\ran\Gamma_0$ one has
\begin{equation}\label{einmal}
 \gamma(\lambda)\varphi =\bigl(I+(\lambda-\mu)(A_0-\lambda)^{-1}\bigr)\gamma(\mu)\varphi.
\end{equation}
In particular, for all $\varphi\in\ran\Gamma_0$ the $\mathfrak H$-valued function $\lambda\mapsto\gamma(\lambda)\varphi$ 
is holomorphic on $\rho(A_0)$.
For $\lambda\in\rho(A_0)$ we shall later also make use of the relations 
\begin{equation}\label{jaha}
 \frac{d^k}{d\lambda^k} \gamma(\lambda) \varphi = k!(A_0-\lambda)^{-k} \gamma(\lambda) \varphi,\qquad k=0,1,\dots,
\end{equation}
for $\varphi \in \ran \Gamma_0$ and
\begin{equation}
 \frac{d^k}{d\lambda^k}\gamma\big(\overline \lambda\big)^*=k!\Gamma_1 (A_0-\lambda)^{-k-1},\qquad k=0,1,\dots,
\end{equation}
which were proved in \cite[Lemma~2.4]{BLL13_2}.
In the context of the $\gamma$-field we finally note that in the case of an ordinary or generalized boundary triple 
$\{\mathcal G,\Gamma_0,\Gamma_1\}$
the property $\ran\Gamma_0=\mathcal G$ implies $\gamma(\lambda)=\overline{\gamma(\lambda)}$. 
This leads to some obvious simplifications in the above considerations,
that is, \eqref{einmal} and \eqref{jaha} hold for all $\varphi \in \mathcal{G}$ and they can be viewed as 
equalities in $\mathfrak B(\mathcal G,\mathfrak H)$.

Next we collect some useful properties of the Weyl function $M$ associated to the quasi boundary triple 
$\{\mathcal G,\Gamma_0,\Gamma_1\}$.
Observe first that the values $M(\lambda)$, $\lambda\in\rho(A_0)$, are densely defined linear operators in 
$\mathcal G$ with $\dom M(\lambda)=\ran\Gamma_0$ and
$\ran M(\lambda)\subset \ran\Gamma_1$, and that
\begin{equation}
 M(\lambda)\Gamma_0 f_\lambda=\Gamma_1 f_\lambda,\quad f_\lambda\in\ker(T-\lambda).
\end{equation}
For $\lambda,\mu\in\rho(A_0)$ the Weyl function and $\gamma$-field are connected via the identity
\begin{equation}\label{holladi}
 M(\lambda)\varphi-M(\mu)^*\varphi=(\lambda-\overline\mu)\gamma(\mu)^*\gamma(\lambda)\varphi,\quad\varphi\in\ran\Gamma_0.
\end{equation}
This leads to $M(\lambda)\subset M\big(\overline\lambda\big)^*$, $\lambda\in\rho(A_0)$, and hence $M(\lambda)$ is a closable,
but in general unbounded operator in 
$\mathcal G$. Furthermore,
together with \eqref{einmal} one obtains from \eqref{holladi} that
\begin{equation}\label{juhu}
 M(\lambda)\varphi=M(\overline\mu)\varphi+(\lambda-\overline\mu)\gamma(\mu)^*\bigl(I+(\lambda-\mu)(A_0-\lambda)^{-1}\bigr)
 \gamma(\mu)\varphi,\quad\varphi\in\ran\Gamma_0,
\end{equation}
and hence for each $\varphi\in \ran \Gamma_0$ the $\mathcal G$-valued function $\lambda\mapsto M(\lambda)\varphi$ 
is holomorphic on $\rho(A_0)$. Moreover, due to \eqref{juhu}
the operator-valued function $\lambda\mapsto M(\lambda)$ can be viewed as the sum of a possibly unbounded operator 
$M(\overline\mu)$ and the function 
\begin{equation*}
 \lambda\mapsto (\lambda-\overline\mu)\gamma(\mu)^*\bigl(I+(\lambda-\mu)(A_0-\lambda)^{-1}\bigr)\gamma(\mu),
\end{equation*}
whose values are densely defined bounded operators. Thus it is clear that for $\lambda\in\rho(A_0)$ 
the derivatives of $M$ are bounded operators and from
\cite[Lemma 2.4]{BLL13_2} and \eqref{soso} one obtains for $\varphi \in \ran \Gamma_0$
 \begin{equation}\label{spitzenklasse}
 \frac{d^k}{d\lambda^k} M(\lambda) \varphi=k!\Gamma_1(A_0-\lambda)^{-k}\gamma(\lambda) \varphi,\qquad k=1,2,\dots.
\end{equation}
For $k=1$ and $\lambda\in\rho(A_0)\cap\mathbb R$ it follows directly from \eqref{holladi} that 
\begin{equation}\label{bittesehr}
 \frac{d}{d\lambda} (M(\lambda)\varphi,\varphi)_{\mathcal G}=\bigl(\gamma(\lambda)\varphi,
 \gamma(\lambda)\varphi\bigr)_{\mathfrak H}>0,
 \quad\varphi\in\ran\Gamma_0\setminus\{0\}.
\end{equation}
Similarly, as for the $\gamma$-field some of the above considerations simplify in the special case 
that $M$ is the Weyl function corresponding to an
ordinary or generalized boundary triple $\{\mathcal G,\Gamma_0,\Gamma_1\}$. 
Since in both situations $\ran\Gamma_0=\mathcal G$ it follows that the operators $M(\lambda)$ are defined on the
whole space $\mathcal G$ and hence \eqref{holladi} yields $M(\lambda)= M\big(\overline\lambda\big)^*$, 
so that $M(\lambda)\in\mathfrak B(\mathcal G)$ for all $\lambda\in\rho(A_0)$. Hence \eqref{spitzenklasse} holds 
for all $\varphi \in \mathcal{G}$ and
hence, as an equality in $\mathfrak{B}(\mathcal{G})$
and by \eqref{bittesehr} the $\mathfrak B(\mathcal G)$-valued operator function $M$ is monotonously non-decreasing 
on intervals in $\rho(A_0)\cap\mathbb R$.    

We shall use quasi boundary triples and their Weyl functions to describe self-adjoint extensions of $S$ and 
their spectral properties in Section~\ref{section_def_A_eta}.
For a linear operator $B$ in $\mathcal G$ we consider the extension
\begin{equation}\label{abba}
 A_{[B]}=T\upharpoonright \ker(\Gamma_0 + B \Gamma_1),
\end{equation}
that is, $f\in\dom T$ belongs to $\dom A_{[B]}$ if and only if $f$ satisfies the abstract boundary condition 
$\Gamma_0 f = -B \Gamma_1 f$. 
We emphasize that
the abstract boundary condition in \eqref{abba} is different to the usual choice $\ker(\Gamma_1-\Theta\Gamma_0)$, 
but is formally related to it via $\Theta=-B^{-1}$.
Note that for a
symmetric operator $B$ in $\mathcal G$ the abstract Green's identity yields
\begin{equation}\label{abab}
 (A_{[B]}f,g)_{\mathfrak H}-(f,A_{[B]}g)_{\mathfrak H}=
 -(\Gamma_1 f, B\Gamma_1 g)_{\mathcal G} + (B\Gamma_1 f,\Gamma_1 g)_{\mathcal G}=0,
\end{equation}
and hence the extension $A_{[B]}$ is symmetric in $\mathfrak H$. However, it is important to note that 
a self-adjoint operator $B$ does not automatically lead to a self-adjoint extension $A_{[B]}$. 
In fact, in contrast to the theory of ordinary boundary triples
in the more general situation of
quasi boundary triples and generalized boundary triples there is not a one-to-one correspondence
between self-adjoint parameters $B$ (or $\Theta$)
and self-adjoint extensions $A_{[B]}$ of the symmetric operator $S$ in $\mathfrak H$. 

The next theorem contains a variant of Krein's resolvent formula for canonical extensions which is useful to prove 
self-adjointness of such extensions;
cf. \cite[Theorem~2.8]{BL07}, \cite[Theorem 6.16]{BL12}, and \cite[Theorem 2.6]{BLL13_2}.

\begin{thm} \label{theorem_Krein_abstract}
Let $S$ be a densely defined closed symmetric operator in $\mathfrak H$ and let $\{\mathcal G,\Gamma_0,\Gamma_1\}$
be a quasi boundary triple for $\overline T=S^*$
with $A_0=T\upharpoonright\ker\Gamma_0$, $\gamma$-field $\gamma$ and Weyl function $M$.
Let $B$ be a linear operator in $\mathcal G$ and let $A_{[B]}$ 
be the extension of $S$ in \eqref{abba}. Then for all $\lambda\in\rho(A_0)$ one has
\begin{equation*}
 \ker(A_{[B]}-\lambda)=\bigl\{\gamma(\lambda)\varphi:\varphi\in\ker(I+B M(\lambda))\bigr\}
\end{equation*}
and, in particular, 
 $\lambda\in\sigma_{\mathrm{p}}(A_{[B]})$ if and only if $-1\in\sigma_{\mathrm{p}}(BM(\lambda))$. 
 Furthermore, if $\lambda\in\rho(A_0)$ is not an eigenvalue of $A_{[B]}$ then the following
 assertions {\rm (i)--(ii)} hold.
\begin{itemize}
 \item [{\rm (i)}] $g\in\ran(A_{[B]}-\lambda)$ if and only if $B\gamma(\overline\lambda)^*g\in\dom(I+BM(\lambda))^{-1}$; 
 \item [{\rm (ii)}] For all $ g\in\ran(A_{[B]}-\lambda)$ we have
 \begin{equation}\label{ressi}
  (A_{[B]}-\lambda)^{-1}g=(A_0-\lambda)^{-1}g-\gamma(\lambda)\bigl(I+BM(\lambda)\bigr)^{-1}B\gamma(\overline\lambda)^*g.
 \end{equation}
\end{itemize}
If $B\in\mathfrak B(\mathcal G)$ is self-adjoint and $(I+BM(\lambda_\pm))^{-1}\in\mathfrak B(\mathcal G)$ 
for some $\lambda_\pm\in\mathbb C^\pm$,
then $A_{[B]}$ is a self-adjoint operator in $\mathfrak H$ and 
\eqref{ressi} holds for all $\lambda\in\rho(A_0)\cap\rho(A_{[B]})$ and all $g\in\mathfrak H$.
\end{thm}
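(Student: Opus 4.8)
The plan is to establish the statement in the natural logical order: first the kernel formula, then the range characterization and the resolvent identity~\eqref{ressi} under the pointwise hypothesis, and finally to upgrade this to self-adjointness when $B$ is bounded and self-adjoint with $(I+BM(\lambda_\pm))^{-1}$ bounded. For the kernel formula, I would start from the decomposition~\eqref{decomposition}: any $f\in\ker(T-\lambda)$ is of the form $f=\gamma(\lambda)\varphi$ with $\varphi=\Gamma_0 f\in\ran\Gamma_0$, and then $f\in\dom A_{[B]}$ exactly when $\Gamma_0 f+B\Gamma_1 f=0$, i.e.\ $\varphi+BM(\lambda)\varphi=0$, using $\Gamma_1 f = M(\lambda)\Gamma_0 f$. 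This gives $\ker(A_{[B]}-\lambda)=\{\gamma(\lambda)\varphi:\varphi\in\ker(I+BM(\lambda))\}$ and, since $\gamma(\lambda)$ is injective on $\ran\Gamma_0$, the equivalence $\lambda\in\sigma_{\mathrm p}(A_{[B]})\Leftrightarrow -1\in\sigma_{\mathrm p}(BM(\lambda))$ is immediate.

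For parts~(i)--(ii), fix $\lambda\in\rho(A_0)$ that is not an eigenvalue of $A_{[B]}$. Given $g\in\mathfrak H$, write the candidate solution $f:=(A_0-\lambda)^{-1}g-\gamma(\lambda)\psi$ with $\psi$ to be determined; such an $f$ solves $(T-\lambda)f=g$ automatically because $\gamma(\lambda)\psi\in\ker(T-\lambda)$ and $A_0\subset T$. The boundary condition $\Gamma_0 f=-B\Gamma_1 f$ becomes, after applying $\Gamma_0$ and $\Gamma_1$ and using $\Gamma_0(A_0-\lambda)^{-1}g=0$, $\Gamma_0\gamma(\lambda)\psi=\psi$, $\Gamma_1(A_0-\lambda)^{-1}g=\gamma(\overline\lambda)^*g$ (this is~\eqref{soso}), and $\Gamma_1\gamma(\lambda)\psi=M(\lambda)\psi$, the equation $(I+BM(\lambda))\psi=B\gamma(\overline\lambda)^*g$. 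Since $\lambda$ is not an eigenvalue, $I+BM(\lambda)$ is injective, so this is solvable precisely when $B\gamma(\overline\lambda)^*g\in\ran(I+BM(\lambda))=\dom(I+BM(\lambda))^{-1}$, giving~(i); substituting $\psi=(I+BM(\lambda))^{-1}B\gamma(\overline\lambda)^*g$ and checking the two cases ($g\in\ran(A_{[B]}-\lambda)$ solvable vs.\ not) yields~(ii). One has to be slightly careful that $g\in\ran(A_{[B]}-\lambda)$ is \emph{equivalent} to solvability of the boundary equation, not merely implied by it — this uses that a solution of $(T-\lambda)f=g$ satisfying the boundary condition is unique (again since $\lambda\notin\sigma_{\mathrm p}(A_{[B]})$), so every preimage must be of the constructed form.

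For the final claim, suppose $B=B^*\in\mathfrak B(\mathcal G)$ and $(I+BM(\lambda_\pm))^{-1}\in\mathfrak B(\mathcal G)$ for some $\lambda_\pm\in\mathbb C^\pm$. By~\eqref{abab} the operator $A_{[B]}$ is symmetric, so it suffices to show $\ran(A_{[B]}-\lambda_\pm)=\mathfrak H$. Since $(I+BM(\lambda_\pm))^{-1}$ is everywhere defined and bounded, $\lambda_\pm$ is not an eigenvalue of $A_{[B]}$ (the kernel formula forces $\ker(I+BM(\lambda_\pm))=\{0\}$), and condition~(i) holds for \emph{every} $g\in\mathfrak H$ because $\dom(I+BM(\lambda_\pm))^{-1}=\mathcal G$; hence $\ran(A_{[B]}-\lambda_\pm)=\mathfrak H$, and a symmetric operator with $\lambda_+$ and $\lambda_-$ in its resolvent set is self-adjoint. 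Then $\rho(A_{[B]})\supset\mathbb C\setminus\mathbb R$, and for $\lambda\in\rho(A_0)\cap\rho(A_{[B]})$ one checks that $I+BM(\lambda)$ is automatically boundedly invertible: injectivity follows since $\lambda\notin\sigma_{\mathrm p}(A_{[B]})$, and surjectivity follows by combining~(i) (valid for all $g$ since $A_{[B]}-\lambda$ is onto) with boundedness considerations — alternatively one invokes the analytic Fredholm / perturbation argument via~\eqref{juhu} that the difference of $BM(\lambda)$ at two points is bounded. Thus~\eqref{ressi} holds for all such $\lambda$ and all $g\in\mathfrak H$.

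I expect the main obstacle to be the bookkeeping in part~(ii): carefully separating the two sub-cases and verifying that the constructed $f$ genuinely lies in $\dom A_{[B]}$ rather than just formally solving the boundary equation, i.e.\ that the mapping properties of $\gamma(\lambda)$, $\gamma(\overline\lambda)^*$ and $M(\lambda)$ (all with domain/range in $\ran\Gamma_0\subset\mathcal G$, which is merely dense) are compatible so that every term stays in the correct domain. Once the bounded self-adjoint hypothesis is imposed these domain subtleties disappear, which is exactly why the last sentence of the theorem is the clean, directly applicable version.
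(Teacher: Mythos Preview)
The paper does not include a proof of this theorem; it is stated as a known abstract result with references to \cite[Theorem~2.8]{BL07}, \cite[Theorem~6.16]{BL12}, and \cite[Theorem~2.6]{BLL13_2}. Your proof plan is correct and follows exactly the standard argument in those references: the kernel formula via the decomposition~\eqref{decomposition}, the ansatz $f=(A_0-\lambda)^{-1}g-\gamma(\lambda)\psi$ for the range characterization and resolvent identity, and self-adjointness via surjectivity of $A_{[B]}-\lambda_\pm$. One minor remark: in the final paragraph you worry about establishing $(I+BM(\lambda))^{-1}\in\mathfrak B(\mathcal G)$ for general $\lambda\in\rho(A_0)\cap\rho(A_{[B]})$, but the theorem does not assert this --- it only claims that~\eqref{ressi} holds for all $g\in\mathfrak H$, which follows directly from~(i)--(ii) once $A_{[B]}$ is self-adjoint (so that $\ran(A_{[B]}-\lambda)=\mathfrak H$ and hence~(i) is satisfied for every $g$).
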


\section{Quasi boundary triples and Weyl functions for Dirac operators with singular interactions supported on $\Sigma$}
\label{section_preliminaries}

In this section we construct a quasi boundary triple which turns out to be suitable 
for the definition of Dirac operators with electrostatic $\delta$-shell interactions
supported on a compact $C^\infty$-surface $\Sigma$. 
We pay special attention to the properties 
of the associated Weyl function; these in turn will lead to a better understanding of the spectral properties
of Dirac operators with electrostatic $\delta$-shell interactions
in Section~\ref{section_def_A_eta}. For $\lambda\in (-mc^2,mc^2)$ the values $M(\lambda)$ 
of the Weyl function are closely related with the operators
$C_\sigma^\lambda$ in \cite{AMV14,AMV15,AMV16}; in this view the results  
on $M(\cdot)$ in Proposition~\ref{proposition_Weyl_function_mcsquare}~(ii) and 
Proposition~\ref{proposition_spectrum_Weyl_function} for $\lambda\in (-mc^2,mc^2)$ 
are known from \cite{AMV14,AMV15,AMV16}.

Recall first that the free Dirac operator
\begin{equation} \label{def_free_Dirac}
  A_0 f := -i c \sum_{j=1}^3 \alpha_j \partial_j f + m c^2 \beta f, \qquad \dom A_0 = H^1(\mathbb{R}^3; \mathbb{C}^4),
\end{equation}
where the Dirac matrices $\alpha_1, \alpha_2, \alpha_3$ and $\beta$ are given by \eqref{def_Dirac_matrices},
is self-adjoint in $L^2(\mathbb{R}^3; \mathbb{C}^4)$ and that 
\begin{equation} \label{spectrum_A_0}
  \sigma(A_0) = (-\infty, -m c^2] \cup [m c^2, \infty)
\end{equation}
holds; cf. \cite{T92} or \cite[Chapter~20]{W03}. 
Next, for $\lambda \in \rho(A_0)$ the resolvent of $A_0$ 
acts as
\begin{equation} \label{resolvent_A_0}
(A_0 - \lambda)^{-1} f(x) = \int_{\mathbb{R}^3} G_\lambda(x - y) f(y) \mathrm{d} y, 
\quad x \in \mathbb{R}^3,~f \in L^2(\mathbb{R}^3; \mathbb{C}^4),
\end{equation}
where the $\mathbb C^{4\times 4}$-valued integral kernel $G_\lambda$ is given by
\begin{equation} \label{def_G_lambda}
	G_\lambda(x) \!=\! \left( \frac{\lambda}{c^2} I_4 \! + m \beta 
	\! + \left( 1 \! - i \sqrt{\frac{\lambda^2}{c^2} - (m c)^2} |x| \right) \frac{i(\alpha \cdot x )}{c |x|^2} \right)
	\frac{e^{i \sqrt{\lambda^2/c^2\! - (m c)^2} |x|}}{4 \pi |x|};
\end{equation}
see \cite[Section~1.E]{T92} or \cite[Lemma~2.1]{AMV15}.
The explicit form of this integral kernel will be particularly important in our
further considerations. 
Moreover, if we denote by $-\Delta$ the self-adjoint Laplacian in $L^2(\mathbb{R}^3; \mathbb{C})$ 
defined on $H^2(\mathbb{R}^3; \mathbb{C})$, then using~\eqref{eq:commutation} we get
\begin{equation} \label{A_0_square}
  A_0^2  = (-c^2 \Delta + m^2 c^4) I_4, \qquad \dom A_0^2 = H^2(\mathbb{R}^3; \mathbb{C}^4);
\end{equation}
cf. \cite[Korollar~20.2]{W03} (here, the case $m=c=1$ is considered, which is up to a scaling transform
equivalent to our case). 
The operator $(-c^2 \Delta + m^2 c^4) I_4$ is understood as a $4 \times 4$ block operator with diagonal structure, 
where each diagonal entry acts as $-c^2 \Delta + m^2 c^4$.

In the following let $\Sigma$ be the boundary of a bounded $C^\infty$-domain in $\mathbb R^3$.
For the definition of the quasi boundary triple in Theorem~\ref{theorem_triple} below we first introduce 
two integral operators associated with the function
\begin{equation*} 
  G_0(x) = \frac{e^{-m c |x|}}{4 \pi |x|} 
             \left(m \beta + \big( 1 + m c |x| \big) \frac{i(\alpha \cdot x )}{c |x|^2} \right).
\end{equation*}
Note that there exist constants $\kappa, R > 0$ such that
\begin{equation} \label{asymptotics_G_0}
  |G_0(x)| \leq \kappa \begin{cases} |x|^{-2},&~|x | < R, \\
                                       e^{-m c |x|},&~ |x| \geq R. \end{cases}
\end{equation}
Now define the strongly singular integral 
operator~$M: L^2(\Sigma; \mathbb{C}^4) \rightarrow L^2(\Sigma; \mathbb{C}^4)$ by
\begin{equation} \label{def_M}
    M \varphi(x) := 
    \lim_{\varepsilon \searrow 0} \int_{|x - y| > \varepsilon} G_{0}(x - y) \varphi(y) \mathrm{d} \sigma(y),
    \quad x \in \Sigma,~ \varphi \in L^2(\Sigma; \mathbb{C}^4).
\end{equation}
It was shown in \cite[Lemma~3.3 and Lemma~3.7]{AMV14} 
that $M$ is a bounded self-adjoint operator (to see this, note that $c M = C_\sigma$ in the 
notation of \cite[Lemma~3.3]{AMV14}, where $m$ in \cite[Lemma~3.1]{AMV14} is replaced by $mc$).
Furthermore, we define the mapping $\gamma: L^2(\Sigma; \mathbb{C}^4) \rightarrow L^2(\mathbb{R}^3; \mathbb{C}^4)$ by
\begin{equation} \label{def_gamma}
  \gamma \varphi(x) := \int_\Sigma G_0(x - y) \varphi(y) \mathrm{d} \sigma(y), 
  \qquad x \in \mathbb{R}^3, ~\varphi \in L^2(\Sigma; \mathbb{C}^4),
\end{equation}
and observe that \eqref{asymptotics_G_0} and Proposition~\ref{proposition_integral_operators2} 
imply that $\gamma$ is bounded and everywhere defined. 

The following auxiliary result ensures that the operator $T$ in \eqref{def_T} below is well-defined.

\begin{lem} \label{lemma_decomposition}
  Let $f, g \in H^1(\mathbb{R}^3; \mathbb{C}^4)$ and $\varphi, \psi \in L^2(\Sigma; \mathbb{C}^4)$
  such that $f + \gamma \varphi = g + \gamma \psi$. Then $f=g$ and $\varphi = \psi$.
\end{lem}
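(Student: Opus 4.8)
The plan is to reduce the claim to an injectivity/support statement for the single-layer-type potential $\gamma$. Suppose $f+\gamma\varphi=g+\gamma\psi$ with $f,g\in H^1(\mathbb R^3;\mathbb C^4)$ and $\varphi,\psi\in L^2(\Sigma;\mathbb C^4)$. Setting $h:=f-g\in H^1(\mathbb R^3;\mathbb C^4)$ and $\chi:=\psi-\varphi\in L^2(\Sigma;\mathbb C^4)$ we have $h=\gamma\chi$, so it suffices to show that $\gamma\chi\in H^1(\mathbb R^3;\mathbb C^4)$ forces $\chi=0$ (and then $h=\gamma\chi=0$, giving $f=g$ and $\varphi=\psi$). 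First I would observe that $G_0$ is, up to the bounded factor $m\beta$ and up to the exponential decay and the Riesz-kernel factor $i(\alpha\cdot x)/(c|x|^2)$, essentially the integral kernel of $(A_0+mc^2)$-type resolvent at the spectral point $\lambda=-mc^2$; more precisely, a direct computation using the explicit kernel $G_\lambda$ in \eqref{def_G_lambda} shows $G_0=\lim_{\lambda\to -mc^2} G_\lambda$ (the square root $\sqrt{\lambda^2/c^2-(mc)^2}$ tending to $0$), and one checks that $G_0$ is a fundamental solution of the first-order operator $(-ic\,\alpha\cdot\nabla + mc^2\beta + mc^2)=A_0+mc^2$ in the distributional sense away from the origin, hence $(A_0+mc^2)\gamma\chi=0$ in $\mathbb R^3\setminus\Sigma$ and $\gamma\chi$ has a jump across $\Sigma$ governed by $\chi$.

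Concretely, I would argue as follows. Because $\gamma\chi\in H^1(\mathbb R^3;\mathbb C^4)$ by assumption, it has no jump of its Sobolev trace across $\Sigma$, i.e.\ $(\gamma\chi)_+|_\Sigma=(\gamma\chi)_-|_\Sigma$. On the other hand, the standard jump relations for the (Dirac) single-layer potential with kernel $G_0$ — which are exactly what underlies the definition of the strongly singular operator $M$ in \eqref{def_M} — give that the one-sided nontangential traces of $\gamma\chi$ differ by a multiple of $(\alpha\cdot\nu)\chi$, where $\nu$ is the outward unit normal. Since $\alpha\cdot\nu$ is invertible (indeed $(\alpha\cdot\nu)^2=I_4$ by \eqref{eq:commutation}), the vanishing of the jump forces $\chi=0$ $\sigma$-a.e.\ on $\Sigma$. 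I would cite \cite[Lemma~3.3]{AMV14} (and the related computations in \cite{AMV15}) for the precise trace/jump formulas, noting that $cM=C_\sigma$ there and that the half-sum and half-difference of the one-sided traces of $\gamma\chi$ are $M\chi$ and $\tfrac{i}{2c}(\alpha\cdot\nu)\chi$ respectively. Once $\chi=\psi-\varphi=0$ we get $\varphi=\psi$, and then $f-g=\gamma\chi=0$, i.e.\ $f=g$.

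An alternative, slightly more self-contained route avoids quoting the jump relations: since $\gamma\chi\in H^1(\mathbb R^3;\mathbb C^4)\subset\dom A_0$ and $(A_0+mc^2)\gamma\chi$ is a distribution supported on $\Sigma$ (namely it equals $-ic(\alpha\cdot\nu)\chi\,\delta_\Sigma$, up to a normalizing constant), but at the same time $(A_0+mc^2)\gamma\chi\in L^2(\mathbb R^3;\mathbb C^4)$ because $\gamma\chi\in\dom A_0$, a nonzero distribution concentrated on the lower-dimensional set $\Sigma$ cannot lie in $L^2$; hence $(A_0+mc^2)\gamma\chi=0$ and $(\alpha\cdot\nu)\chi=0$, so $\chi=0$. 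The main obstacle, and the step requiring the most care, is making the identity $(A_0+mc^2)\gamma\chi = -ic(\alpha\cdot\nu)\chi\,\delta_\Sigma$ (or equivalently the jump relation across $\Sigma$) fully rigorous for $\chi\in L^2(\Sigma;\mathbb C^4)$ rather than smooth $\chi$; this is precisely the content of the mapping properties of $\gamma$ and $M$ established in \cite{AMV14,AMV15}, which I would invoke, together with the density of $C^\infty(\Sigma;\mathbb C^4)$ in $L^2(\Sigma;\mathbb C^4)$ and the boundedness of $\gamma$ (from \eqref{asymptotics_G_0} and Proposition~\ref{proposition_integral_operators2}) to pass to the limit. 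Everything else is bookkeeping with the explicit kernel $G_0$ and the anticommutation relations \eqref{eq:commutation}.
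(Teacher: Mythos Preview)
Your main argument via the jump relations is correct and gives a genuinely different route from the paper's proof. One factual slip worth flagging: $G_0$ is \emph{not} $\lim_{\lambda\to -mc^2}G_\lambda$. Plugging $\lambda=0$ into \eqref{def_G_lambda} (so that $\sqrt{-(mc)^2}=imc$ by the branch convention) reproduces exactly the kernel $G_0$ defined in the paper; thus $G_0$ is the integral kernel of $A_0^{-1}$, and $\gamma\chi$ satisfies $A_0 u=0$ (not $(A_0+mc^2)u=0$) off $\Sigma$. This does not damage your primary argument, since the jump formula $(\gamma\chi)_--(\gamma\chi)_+=\tfrac{i}{c}(\alpha\cdot\nu)\chi$ from \cite[Lemma~3.3]{AMV14} is insensitive to the spectral parameter, and invertibility of $\alpha\cdot\nu$ forces $\chi=0$ once the two one-sided traces agree. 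You should, however, add a line explaining why membership of $\gamma\chi$ in $H^1(\mathbb R^3;\mathbb C^4)$ makes its \emph{nontangential} limits from the two sides (which is what \cite[Lemma~3.3]{AMV14} actually computes) coincide; this amounts to identifying the nontangential limit with the Sobolev trace on each side, which is standard but not literally contained in the cited lemma. Your alternative route, once $A_0+mc^2$ is replaced by $A_0$, is also sound.

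The paper argues instead by duality. It uses the identity $(\gamma\xi,k)_{L^2(\mathbb R^3)}=(\xi,(A_0^{-1}k)|_\Sigma)_{L^2(\Sigma)}$ from \cite[Lemma~2.10]{AMV14}: testing $A_0(\gamma\chi)\in L^2$ against $h\in H^1_0(\mathbb R^3\setminus\Sigma;\mathbb C^4)$ gives $(A_0\gamma\chi,h)=(\gamma\chi,A_0 h)=(\chi,h|_\Sigma)=0$, whence $A_0\gamma\chi=0$ by density and then $\gamma\chi=0$ since $0\in\rho(A_0)$; a second application of the same duality, together with the surjectivity of $k\mapsto (A_0^{-1}k)|_\Sigma$ onto $H^{1/2}(\Sigma;\mathbb C^4)$, then yields $\chi=0$. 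Your jump-relation approach is more geometric and reaches $\chi=0$ in a single stroke, at the price of invoking the layer-potential trace formulas; the paper's duality argument is softer, needing only that $\gamma$ is the adjoint of trace-of-resolvent, and cleanly separates the two conclusions $f=g$ and $\varphi=\psi$.
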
 
\begin{proof}
  From $f + \gamma \varphi = g + \gamma \psi$ it follows 
  $\gamma(\psi - \varphi) = f - g \in H^1(\mathbb{R}^3; \mathbb{C}^4) = \dom A_0$.
  Let $h \in H^1_0(\mathbb{R}^3 \setminus \Sigma; \mathbb{C}^4)$. Then the self-adjointness 
  of $A_0$ and \cite[Lemma~2.10]{AMV14} yield
  \begin{equation*}
    \begin{split}
      (A_0 \gamma(\psi - \varphi), h)_{L^2(\mathbb{R}^3; \mathbb{C}^4)}
        &= (\gamma(\psi - \varphi), A_0 h)_{L^2(\mathbb{R}^3; \mathbb{C}^4)} \\
      &= \big(\psi - \varphi, (A_0^{-1} A_0 h)|_\Sigma \big)_{L^2(\Sigma; \mathbb{C}^4)} = 0
    \end{split}
  \end{equation*}
  and, since $H^1_0(\mathbb{R}^3 \setminus \Sigma; \mathbb{C}^4)$ is dense in $L^2(\mathbb{R}^3; \mathbb{C}^4)$,
  we conclude $A_0 \gamma(\psi - \varphi) = 0$. Now $0 \in \rho(A_0)$ yields $f - g = \gamma(\psi - \varphi) = 0$.
  
  It remains to show $\varphi = \psi$. For $k \in L^2(\mathbb{R}^3; \mathbb{C}^4)$ and \cite[Lemma~2.10]{AMV14} we obtain
  \begin{equation*}
    0 = (\gamma(\psi - \varphi), k)_{L^2(\mathbb{R}^3; \mathbb{C}^4)}
      = \big(\psi - \varphi, (A_0^{-1} k)|_\Sigma \big)_{L^2(\Sigma; \mathbb{C}^4)},
  \end{equation*}
  and since the range of the mapping $L^2(\mathbb{R}^3; \mathbb{C}^4) \ni k \mapsto (A_0^{-1} k)|_\Sigma$ is $H^{1/2}(\Sigma; \mathbb{C}^4)$
  we conclude $\varphi = \psi$. 
\end{proof}

Now, we define the operator $T$ in $L^2(\mathbb{R}^3; \mathbb{C}^4)$ via
\begin{equation} \label{def_T}
  \begin{split}
   T (f + \gamma \varphi) &:= A_0 f,\\
    \dom T &:= \big\{ f + \gamma \varphi: f \in H^1(\mathbb{R}^3; \mathbb{C }^4), \varphi \in L^2(\Sigma; \mathbb{C}^4) \big\}.
  \end{split}
\end{equation}
In the following elements in $\dom T$ 
will always be written in the form $f + \gamma \varphi$ with $f \in H^1(\mathbb{R}^3; \mathbb{C}^4)$
and $\varphi \in L^2(\Sigma; \mathbb{C}^4)$;
this decomposition is unique because of Lemma~\ref{lemma_decomposition} and hence $T$ is well-defined.

\begin{thm} \label{theorem_triple}
  Let $T$ be given by \eqref{def_T}. Then, the operator
  \begin{equation}\label{sss}
   S:=A_0\upharpoonright H^1_0(\mathbb{R}^3 \setminus \Sigma; \mathbb{C}^4)
  \end{equation}
is densely defined, closed and symmetric in $L^2(\mathbb{R}^3; \mathbb{C}^4)$ 
and $\{L^2(\Sigma; \mathbb{C}^4), \Gamma_0, \Gamma_1\}$,
where
\begin{equation}\label{def_triple}
 \Gamma_0(f + \gamma \varphi) = \varphi\quad\text{and}\quad \Gamma_1(f + \gamma \varphi) 
 = f|_\Sigma + M \varphi,\qquad f + \gamma \varphi\in\dom T, 
\end{equation}
is a quasi boundary triple for $\overline T=S^*$ such that $T\upharpoonright\ker\Gamma_0$
coincides with the free Dirac operator $A_0$ in \eqref{def_free_Dirac}.
\end{thm}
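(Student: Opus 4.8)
The plan is to apply Theorem~\ref{theorem_guess} with $\mathfrak H=L^2(\mathbb R^3;\mathbb C^4)$, $\mathcal G=L^2(\Sigma;\mathbb C^4)$, and with $T$, $\Gamma_0$, $\Gamma_1$ as in \eqref{def_T} and \eqref{def_triple}. By Lemma~\ref{lemma_decomposition} the decomposition $f+\gamma\varphi$ is unique, so $\Gamma_0,\Gamma_1$ are well-defined linear maps; they take values in $\mathcal G$ because $M$ is bounded on $L^2(\Sigma;\mathbb C^4)$ and $f|_\Sigma\in H^{1/2}(\Sigma;\mathbb C^4)\subset L^2(\Sigma;\mathbb C^4)$ for $f\in H^1(\mathbb R^3;\mathbb C^4)$. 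Condition (iii) of Theorem~\ref{theorem_guess} is immediate: $\ker\Gamma_0=\{f+\gamma\varphi:\varphi=0\}=H^1(\mathbb R^3;\mathbb C^4)$ and $T(f+\gamma\cdot 0)=A_0 f$, so $T\upharpoonright\ker\Gamma_0$ equals the self-adjoint operator $A_0$ from \eqref{def_free_Dirac}.

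For the abstract Green's identity (i) I would take $f+\gamma\varphi,\,g+\gamma\psi\in\dom T$ and compute, using the self-adjointness of $A_0$ to cancel $(A_0 f,g)_{\mathfrak H}-(f,A_0 g)_{\mathfrak H}$,
\begin{equation*}
 (T(f+\gamma\varphi),g+\gamma\psi)_{\mathfrak H}-(f+\gamma\varphi,T(g+\gamma\psi))_{\mathfrak H}=(A_0 f,\gamma\psi)_{\mathfrak H}-(\gamma\varphi,A_0 g)_{\mathfrak H}.
\end{equation*}
The key input is the trace relation $(\gamma\chi,k)_{\mathfrak H}=(\chi,(A_0^{-1}k)|_\Sigma)_{\mathcal G}$ for $k\in L^2(\mathbb R^3;\mathbb C^4)$ from \cite[Lemma~2.10]{AMV14} (here $G_0$ is precisely the kernel $G_\lambda$ of $(A_0-\lambda)^{-1}$ evaluated at $\lambda=0\in\rho(A_0)$). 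Applying it with $k=A_0 f$ and $k=A_0 g$ gives $(A_0 f,\gamma\psi)_{\mathfrak H}=(f|_\Sigma,\psi)_{\mathcal G}$ and $(\gamma\varphi,A_0 g)_{\mathfrak H}=(\varphi,g|_\Sigma)_{\mathcal G}$, so the displayed difference equals $(f|_\Sigma,\psi)_{\mathcal G}-(\varphi,g|_\Sigma)_{\mathcal G}$. On the other hand $(\Gamma_1(f+\gamma\varphi),\Gamma_0(g+\gamma\psi))_{\mathcal G}-(\Gamma_0(f+\gamma\varphi),\Gamma_1(g+\gamma\psi))_{\mathcal G}=(f|_\Sigma+M\varphi,\psi)_{\mathcal G}-(\varphi,g|_\Sigma+M\psi)_{\mathcal G}$, and the extra terms $(M\varphi,\psi)_{\mathcal G}-(\varphi,M\psi)_{\mathcal G}$ vanish since $M$ is self-adjoint; this proves (i).

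For (ii), the space $\ker\Gamma=\ker\Gamma_0\cap\ker\Gamma_1$ contains $C_c^\infty(\mathbb R^3\setminus\Sigma;\mathbb C^4)$ (for such $f$ one has $\varphi=0$ and $f|_\Sigma=0$), hence is dense in $\mathfrak H$; and $\ran\Gamma$ contains $\{(\varphi,\psi+M\varphi):\varphi\in L^2(\Sigma;\mathbb C^4),\,\psi\in H^{1/2}(\Sigma;\mathbb C^4)\}$ because the trace map $H^1(\mathbb R^3;\mathbb C^4)\to H^{1/2}(\Sigma;\mathbb C^4)$ is surjective, and since $H^{1/2}(\Sigma;\mathbb C^4)$ is dense in $L^2(\Sigma;\mathbb C^4)$ this set is dense in $\mathcal G\times\mathcal G$. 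Theorem~\ref{theorem_guess} then yields that $S:=T\upharpoonright(\ker\Gamma_0\cap\ker\Gamma_1)$ is densely defined, closed and symmetric, that $\{L^2(\Sigma;\mathbb C^4),\Gamma_0,\Gamma_1\}$ is a quasi boundary triple for $\overline T=S^*$, and that $T\upharpoonright\ker\Gamma_0=A_0$.

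It remains to identify $S$ with $A_0\upharpoonright H^1_0(\mathbb R^3\setminus\Sigma;\mathbb C^4)$. From the description above, $\ker\Gamma_0\cap\ker\Gamma_1=\{f\in H^1(\mathbb R^3;\mathbb C^4):f|_\Sigma=0\}$ and $S$ acts on it as $A_0$, so it suffices to show (componentwise) that $\{f\in H^1(\mathbb R^3):f|_\Sigma=0\}=H^1_0(\mathbb R^3\setminus\Sigma)$. I would argue via the splitting $\mathbb R^3\setminus\Sigma=\Omega\cup(\mathbb R^3\setminus\overline\Omega)$: for $f\in H^1(\mathbb R^3)$ the interior and exterior traces of $f$ on $\Sigma$ both equal $f|_\Sigma$, so $f|_\Sigma=0$ forces $f|_\Omega\in H^1_0(\Omega)$ and $f|_{\mathbb R^3\setminus\overline\Omega}\in H^1_0(\mathbb R^3\setminus\overline\Omega)$ by the standard characterization of $H^1_0$ of a smooth domain as the kernel of the trace operator, whence $f\in H^1_0(\Omega)\oplus H^1_0(\mathbb R^3\setminus\overline\Omega)=H^1_0(\mathbb R^3\setminus\Sigma)$; the reverse inclusion follows from continuity of the trace. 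The substantive steps are thus the Green's identity (i) together with the correct use of the trace relation from \cite{AMV14}; everything else is bookkeeping plus the classical trace characterization of $H^1_0$.
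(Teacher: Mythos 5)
Your argument is correct and follows essentially the same route as the paper: both verify the three hypotheses of Theorem~\ref{theorem_guess}, with the Green's identity reduced via the trace relation from \cite[Lemma~2.10]{AMV14} and the self-adjointness of $M$, and condition (iii) read off from $\ker\Gamma_0=H^1(\mathbb R^3;\mathbb C^4)$. The only cosmetic difference is that you establish density of $\ran\Gamma$ by exhibiting the subset $\{(\varphi,\psi+M\varphi)\}$ and invoking density of $H^{1/2}(\Sigma;\mathbb C^4)$ in $L^2(\Sigma;\mathbb C^4)$, whereas the paper argues by showing the orthogonal complement is trivial; and you spell out the identification $\ker\Gamma=\{f\in H^1:f|_\Sigma=0\}=H^1_0(\mathbb R^3\setminus\Sigma;\mathbb C^4)$, which the paper simply asserts as clear.
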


\begin{proof}
 We shall use Theorem~\ref{theorem_guess} to prove the claim. Note that the mappings
 $\Gamma_0$ and $\Gamma_1$ are well-defined by Lemma~\ref{lemma_decomposition}.
 First, we check Green's identity in Theorem~\ref{theorem_guess}~(i).
 For $f + \gamma \varphi, g + \gamma \psi \in \dom T$ it follows from \eqref{def_T} and the 
  self-adjointness of $A_0$ that
  \begin{equation*}
    \begin{split}
      \big( T (f + \gamma \varphi)&, g + \gamma \psi \big)_{L^2(\mathbb{R}^3; \mathbb{C}^4)}
            - \big( f + \gamma \varphi, T( g + \gamma \psi) \big)_{L^2(\mathbb{R}^3; \mathbb{C}^4)} \\
       &= \big( A_0 f, g + \gamma \psi \big)_{L^2(\mathbb{R}^3; \mathbb{C}^4)}
            - \big( f + \gamma \varphi, A_0 g \big)_{L^2(\mathbb{R}^3; \mathbb{C}^4)} \\
       &= \big( A_0 f, \gamma \psi \big)_{L^2(\mathbb{R}^3; \mathbb{C}^4)}
            - \big( \gamma \varphi, A_0 g \big)_{L^2(\mathbb{R}^3; \mathbb{C}^4)}.
    \end{split}
  \end{equation*}
  Since
  \begin{equation*}
    \begin{split}
      \big( A_0 f, \gamma \psi \big)_{L^2(\mathbb{R}^3; \mathbb{C}^4)} = 
          \big( f|_\Sigma, \psi \big)_{L^2(\Sigma; \mathbb{C}^4)} \quad \text{and} \quad
      \big( \gamma \varphi, A_0 g \big)_{L^2(\mathbb{R}^3; \mathbb{C}^4)} = 
          \big( \varphi, g|_\Sigma \big)_{L^2(\Sigma; \mathbb{C}^4)}
    \end{split}
  \end{equation*}
  by \cite[Lemma 2.10]{AMV14} and $M$ is self-adjoint we obtain
  \begin{equation*}
    \begin{split}
      \big( T &(f + \gamma \varphi), g + \gamma \psi \big)_{L^2(\mathbb{R}^3; \mathbb{C}^4)}
            - \big( f + \gamma \varphi, T( g + \gamma \psi) \big)_{L^2(\mathbb{R}^3; \mathbb{C}^4)} \\
       &= \big( f|_\Sigma, \psi \big)_{L^2(\Sigma; \mathbb{C}^4)}  
            - \big( \varphi, g|_\Sigma \big)_{L^2(\Sigma; \mathbb{C}^4)} \\
       &= \big( f|_\Sigma + M\varphi, \psi \big)_{L^2(\Sigma; \mathbb{C}^4)}
            - \big( \varphi, g|_\Sigma + M\psi\big)_{L^2(\Sigma; \mathbb{C}^4)} \\
       &= \big( \Gamma_1 (f + \gamma \varphi), \Gamma_0 (g + \gamma \psi) \big)_{L^2(\Sigma; \mathbb{C}^4)}
            - \big( \Gamma_0 (f + \gamma \varphi), \Gamma_1 (g + \gamma \psi) \big)_{L^2(\Sigma; \mathbb{C}^4)},
    \end{split}
  \end{equation*}
  that is, assumption (i) in Theorem \ref{theorem_guess} holds. 

  Next, we prove that $\Gamma$ has dense range. To show this
  consider $(\psi, \xi) \in (\ran \Gamma)^\bot$.
  Then, we have 
  \begin{equation}\label{zu}
    \big(\psi, \Gamma_0 (f + \gamma \varphi) \big)_{L^2(\Sigma; \mathbb{C}^4)} 
      + \big(\xi, \Gamma_1 (f + \gamma \varphi) \big)_{L^2(\Sigma; \mathbb{C}^4)} = 0
  \end{equation}
  for all $f + \gamma \varphi \in \dom T$. The special choice
  $\varphi = 0$ leads to 
  \begin{equation*}
    0 = \big(\xi, \Gamma_1 f \big)_{L^2(\Sigma; \mathbb{C}^4)} = \big(\xi, f|_\Sigma \big)_{L^2(\Sigma; \mathbb{C}^4)},\quad 
    f \in H^1(\mathbb{R}^3; \mathbb{C}^4).
  \end{equation*}
  Since the trace operator has dense range we conclude 
  $\xi = 0$ and therefore \eqref{zu} reduces to
  \begin{equation*}
    0 = \big(\psi, \Gamma_0 (f + \gamma \varphi) \big)_{L^2(\Sigma; \mathbb{C}^4)} = (\psi, \varphi)_{L^2(\Sigma; \mathbb{C}^4)} 
  \end{equation*}
  for all $\varphi \in L^2(\Sigma; \mathbb{C}^4)$. Thus $\psi = 0$ and it follows that $\ran \Gamma$ is dense. It is clear that
  $\ker\Gamma=H^1_0(\mathbb{R}^3 \setminus \Sigma; \mathbb{C}^4)$ is dense in $L^2(\mathbb{R}^3; \mathbb{C}^4)$.
  We have shown that assumption~(ii) in Theorem~\ref{theorem_guess} is satisfied.
  Finally, assumption (iii) in Theorem \ref{theorem_guess} holds, since 
  $T\upharpoonright\ker \Gamma_0$ is the free Dirac operator.

  Now it follows from Theorem~\ref{theorem_guess} that
  $\{L^2(\Sigma;\mathbb C^4), \Gamma_0, \Gamma_1 \}$ is a quasi boundary triple for $\overline T=S^*$, 
  where $S$ is the restriction of $T$ onto 
  $\ker\Gamma=H^1_0(\mathbb{R}^3 \setminus \Sigma; \mathbb{C}^4)$ 
  in~\eqref{sss}.
\end{proof}

\begin{remark}
  Note that $\ran \Gamma_0 = L^2(\Sigma; \mathbb{C}^4)$ in Theorem~\ref{theorem_triple} and hence the triple
  $\{L^2(\Sigma;\mathbb C^4), \Gamma_0, \Gamma_1\}$ is even a generalized 
  boundary triple in the sense of \cite{DM95}; cf. Definition~\ref{qbtdef}.
  In particular, it follows that the values of the corresponding $\gamma$-field and Weyl function 
  (see Proposition~\ref{proposition_gamma_Weyl}) are everywhere defined and bounded operators. 
  In the case that $\gamma$ in \eqref{def_gamma} and 
  the strongly singular integral operator $M$ in \eqref{def_M}
  are only considered on a dense subspace of $L^2(\Sigma;\mathbb C^4)$ 
  the corresponding triple in Theorem~\ref{theorem_triple} is still a quasi boundary triple. 
\end{remark}

Next we compute the $\gamma$-field and the Weyl function associated to the quasi (or generalized) boundary triple 
$\{L^2(\Sigma;\mathbb C^4), \Gamma_0, \Gamma_1\}$. It turns out that the operators $\gamma$ and $M$ introduced
in \eqref{def_gamma} and \eqref{def_M}, respectively,
coincide with the values of the $\gamma$-field and the Weyl function at the
point $\lambda=0$.

\begin{prop} \label{proposition_gamma_Weyl}
  Let $\{L^2(\Sigma;\mathbb C^4), \Gamma_0, \Gamma_1\}$ be as in Theorem~\ref{theorem_triple} 
  and let $G_\lambda$ be the integral kernel of the resolvent
  of the free Dirac operator $A_0$ in \eqref{def_G_lambda}. Then the following holds.
  \begin{itemize}
    \item[\rm (i)] The $\gamma$-field is holomorphic on 
    $\rho(A_0)= \mathbb{C} \setminus ( (-\infty, -m c^2] \cup [m c^2, \infty) )$, the operators 
    $\gamma(\lambda): L^2(\Sigma; \mathbb{C}^4) \rightarrow L^2(\mathbb{R}^3; \mathbb{C}^4)$
    are everywhere defined and bounded, and  given by 
    \begin{equation*} 
      \begin{split}
        \gamma(\lambda) \varphi(x) = \int_\Sigma G_\lambda(x - y) \varphi(y) \mathrm{d} \sigma(y),
        \quad x \in \mathbb{R}^3, ~\varphi \in L^2(\Sigma; \mathbb{C}^4).
      \end{split}
    \end{equation*}
    Their adjoints  $\gamma(\lambda)^*: L^2(\mathbb{R}^3; \mathbb{C}^4) \rightarrow L^2(\Sigma; \mathbb{C}^4)$ are
    \begin{equation*}
      \gamma(\lambda)^* f(x) = \int_{\mathbb{R}^3} G_{\overline{\lambda}}(x - y) f(y) \mathrm{d} y,
      \quad x \in \Sigma, ~f \in L^2(\mathbb{R}^3; \mathbb{C}^4).
    \end{equation*}
    The operators $\gamma(\lambda)$ and $\gamma(\lambda)^*$ are compact for all $\lambda \in \rho(A_0)$.
    
    \item[\rm (ii)] The Weyl function $M(\cdot)$ is holomorphic on 
    $\rho(A_0)= \mathbb{C} \setminus ( (-\infty, -m c^2] \cup [m c^2, \infty) )$, the operators 
    $M(\lambda): L^2(\Sigma; \mathbb{C}^4) \rightarrow L^2(\Sigma; \mathbb{C}^4)$ are everywhere defined and bounded, and 
    given by
    \begin{equation*}
      \begin{split}
        &M(\lambda) \varphi(x) := 
        \lim_{\varepsilon \searrow 0} \int_{|x - y| > \varepsilon} G_{\lambda}(x - y) \varphi(y) \mathrm{d} \sigma(y),
        \quad x \in \Sigma, ~\varphi \in L^2(\Sigma; \mathbb{C}^4).
      \end{split}
    \end{equation*}
  \end{itemize}
\end{prop}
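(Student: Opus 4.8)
The plan is to evaluate $\gamma(\lambda)$ and $M(\lambda)$ from Definition~\ref{gammdef} and the direct sum decomposition \eqref{decomposition}, first at $\lambda=0$ (which is available because $0\in\rho(A_0)$ by \eqref{spectrum_A_0}) and then to propagate to arbitrary $\lambda\in\rho(A_0)$ via the abstract identities of Section~\ref{section_quasi_boundary_triples}. For the base case, \eqref{def_T} gives $\gamma\varphi=0+\gamma\varphi\in\dom T$ and $T(\gamma\varphi)=0$, so $\ran\gamma\subset\ker T$; conversely, if $f+\gamma\varphi\in\ker T$ then $A_0f=0$, and $0\in\rho(A_0)$ forces $f=0$. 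Hence $\ker(T-0)=\ran\gamma$, and since $\Gamma_0(\gamma\varphi)=\varphi$ and $\Gamma_1(\gamma\varphi)=0|_\Sigma+M\varphi=M\varphi$ by \eqref{def_triple}, Definition~\ref{gammdef} yields $\gamma(0)=\gamma$ and $M(0)=M$. As $\ran\Gamma_0=L^2(\Sigma;\mathbb C^4)$ (see the Remark after Theorem~\ref{theorem_triple}), formula \eqref{einmal} with $\mu=0$ then gives
\begin{equation*}
 \gamma(\lambda)=\bigl(I+\lambda(A_0-\lambda)^{-1}\bigr)\gamma,\qquad\lambda\in\rho(A_0),
\end{equation*}
so $\gamma(\lambda)$ is everywhere defined and bounded, and depends holomorphically on $\lambda$ in the operator norm because $\lambda\mapsto(A_0-\lambda)^{-1}$ does; alternatively, these two facts are automatic since $\{L^2(\Sigma;\mathbb C^4),\Gamma_0,\Gamma_1\}$ is a generalized boundary triple.

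Next I would identify the integral kernels, starting with the adjoint. By \eqref{soso}, $\gamma(\lambda)^*=\Gamma_1(A_0-\overline\lambda)^{-1}$, and since $(A_0-\overline\lambda)^{-1}f\in H^1(\mathbb R^3;\mathbb C^4)$ has surface component $0$ in the representation underlying \eqref{def_T}, formula \eqref{def_triple} gives $\gamma(\lambda)^*f=\bigl((A_0-\overline\lambda)^{-1}f\bigr)|_\Sigma$; by \eqref{resolvent_A_0}--\eqref{def_G_lambda} this is precisely $x\mapsto\int_{\mathbb R^3}G_{\overline\lambda}(x-y)f(y)\,\mathrm{d}y$ on $\Sigma$. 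The formula for $\gamma(\lambda)$ itself follows by taking the Hilbert-space adjoint, using the pointwise symmetry $G_\lambda(x)^*=G_{\overline\lambda}(-x)$ — which one reads off from \eqref{def_G_lambda} via $\alpha_j^*=\alpha_j$, $\beta^*=\beta$ and $\overline{\sqrt z}=-\sqrt{\overline z}$ for $z\in\mathbb C\setminus[0,\infty)$ — together with Fubini's theorem; alternatively one can pass from the identity $\gamma(\lambda)=(I+\lambda(A_0-\lambda)^{-1})\gamma$ to the kernels using the resolvent identity $G_\lambda=G_0+\lambda\,G_\lambda*G_0$. For compactness, note that $\gamma(\lambda)^*$ factors as
\begin{equation*}
 L^2(\mathbb R^3;\mathbb C^4)\xrightarrow{\,(A_0-\overline\lambda)^{-1}\,}H^1(\mathbb R^3;\mathbb C^4)\longrightarrow H^{1/2}(\Sigma;\mathbb C^4)\hookrightarrow L^2(\Sigma;\mathbb C^4),
\end{equation*}
where the second arrow is the bounded trace map and the last embedding is compact because $\Sigma$ is compact; hence $\gamma(\lambda)^*$, and therefore $\gamma(\lambda)=\gamma(\lambda)^{**}$, is compact for every $\lambda\in\rho(A_0)$.

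For the Weyl function I would use $M(\lambda)=\Gamma_1\gamma(\lambda)$. By the formula for $\gamma(\lambda)$ above, $\gamma(\lambda)\varphi=f+\gamma\varphi$ with $f:=\lambda(A_0-\lambda)^{-1}\gamma\varphi\in\dom A_0=H^1(\mathbb R^3;\mathbb C^4)$, which is exactly the representation of $\gamma(\lambda)\varphi$ in the sense of \eqref{def_T}; hence $M(\lambda)\varphi=\Gamma_1\gamma(\lambda)\varphi=f|_\Sigma+M\varphi$ by \eqref{def_triple}, and it remains to identify $f|_\Sigma$. Here $f=\gamma(\lambda)\varphi-\gamma\varphi$ is the single-layer potential on $\Sigma$ with kernel $G_\lambda-G_0$, which is only \emph{weakly} singular: the $|x|^{-2}$-part of the kernel in \eqref{def_G_lambda} does not depend on $\lambda$, so expanding the exponential yields $G_\lambda(x)-G_0(x)=O(|x|^{-1})$ as $x\to0$. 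Therefore $f$ extends continuously across $\Sigma$ and $f|_\Sigma$ equals the absolutely convergent boundary integral $x\mapsto\int_\Sigma\bigl(G_\lambda(x-y)-G_0(x-y)\bigr)\varphi(y)\,\mathrm{d}\sigma(y)$, $x\in\Sigma$, in which the direct and the principal-value integral coincide. Adding $M\varphi$, which by \eqref{def_M} is the principal-value integral of $G_0$ against $\varphi$, one arrives at
\begin{equation*}
 M(\lambda)\varphi(x)=\lim_{\varepsilon\searrow0}\int_{|x-y|>\varepsilon}G_\lambda(x-y)\varphi(y)\,\mathrm{d}\sigma(y),\qquad x\in\Sigma,
\end{equation*}
while everywhere-definedness, boundedness and norm-holomorphy of $M(\cdot)$ on $\rho(A_0)$ follow from the corresponding properties of $\lambda\mapsto(A_0-\lambda)^{-1}$ and the boundedness of the trace map and of $M$.

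The step I expect to be the main obstacle is this last identification of $f|_\Sigma$: the careful separation of the strongly singular, $\lambda$-independent principal-value operator $M$ — whose boundedness and self-adjointness have to be imported from \cite[Lemma~3.3 and Lemma~3.7]{AMV14}, and ultimately rest on boundedness criteria of the type collected in Appendix~\ref{appa} — from the weakly singular remainder $G_\lambda-G_0$, together with the verification that the $H^{1/2}$-trace of the $H^1$-component $f$ really does coincide with the direct boundary integral. Everything else amounts to routine manipulation with the explicit kernel \eqref{def_G_lambda} and the abstract identities \eqref{decomposition}, \eqref{einmal} and \eqref{soso}.
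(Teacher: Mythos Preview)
Your proposal is correct and follows essentially the same route as the paper: identify $\gamma(0)=\gamma$ and $M(0)=M$, propagate via \eqref{einmal}, read off $\gamma(\lambda)^*$ from \eqref{soso} as the trace of $(A_0-\overline\lambda)^{-1}$, obtain compactness through the compact embedding $H^{1/2}(\Sigma;\mathbb C^4)\hookrightarrow L^2(\Sigma;\mathbb C^4)$, and recover $\gamma(\lambda)$ by duality using the kernel symmetry (the paper states it as $\overline{G_{\overline\lambda}(x-y)}=G_\lambda(x-y)$, you as $G_\lambda(x)^*=G_{\overline\lambda}(-x)$, which are equivalent).

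The only substantive difference is in part (ii), in the identification of $f|_\Sigma$ with $f=\lambda(A_0-\lambda)^{-1}\gamma\varphi$. You argue directly that the leading $|x|^{-2}$-singularity in \eqref{def_G_lambda} is $\lambda$-independent, so $G_\lambda-G_0=O(|x|^{-1})$ and the boundary integral converges absolutely. The paper instead derives the convolution identity $G_\lambda(x-y)-G_0(x-y)=\lambda\int_{\mathbb R^3}G_\lambda(x-z)G_0(y-z)\,\mathrm dz$ from the resolvent equation and Fubini, and then evaluates the composition $(A_0-\lambda)^{-1}\gamma$ at $x\in\Sigma$ directly through this kernel identity; dominated convergence then replaces the absolutely convergent integral by the principal value. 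Your shortcut is legitimate and slightly more transparent about \emph{why} the remainder is weakly singular; the paper's route has the advantage that the equality between the $H^{1/2}$-trace of $f$ and the pointwise boundary integral is built into the computation rather than left as the ``main obstacle'' you flag. Either way the argument closes.
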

\begin{proof}
  (i) By \eqref{soso} we have
  $\gamma(\lambda)^* = \Gamma_1 (A_0 - \overline{\lambda})^{-1}$, $\lambda\in\rho(A_0)$, and 
  this operator has the explicit representation
  \begin{equation}\label{gh}
    \gamma(\lambda)^* f(x) = \int_{\mathbb{R}^3} G_{\overline{\lambda}}(x - y) f(y) \mathrm{d} y,
    \quad x \in \Sigma,~f \in L^2(\mathbb{R}^3; \mathbb{C}^4).
  \end{equation} 
  From the properties of the trace map we conclude $\ran \gamma(\lambda)^* = H^{1/2}(\Sigma; \mathbb{C}^4)$,
  which together with the closed graph theorem implies that $\gamma(\lambda)^*$ 
  is bounded and everywhere defined as an operator from $L^2(\mathbb{R}^3; \mathbb{C}^4)$ onto $H^{1/2}(\Sigma; \mathbb{C}^4)$.
  Since the embedding 
  $H^{1/2}(\Sigma; \mathbb{C}^4) \hookrightarrow L^2(\Sigma; \mathbb{C}^4)$ is compact it follows
  that $\gamma(\lambda)^*$, $\lambda\in\rho(A_0)$, is compact.
  
  Next, we analyze $\gamma(\lambda)$, $\lambda\in\rho(A_0)$. As $\Gamma_0$ is surjective it follows that 
  $\gamma(\lambda)$ is everywhere defined and bounded 
  (see Section~\ref{section_quasi_boundary_triples})
  and since $\gamma(\lambda)^*$ is compact also $\gamma(\lambda) = \gamma(\lambda)^{**}$ is compact. 
  Moreover, using \eqref{gh} and $\overline{G_{\overline\lambda}(x-y)}=G_\lambda(x-y)$ we obtain
  \begin{equation*}
    \begin{split}
      \big( \gamma(\lambda) \varphi, f \big)_{L^2(\mathbb{R}^3; \mathbb{C}^4)} 
          &= \big( \varphi, \gamma(\lambda)^* f \big)_{L^2(\Sigma; \mathbb{C}^4)} 
          = \int_\Sigma \varphi(x) 
               \overline{\int_{\mathbb{R}^3} G_{\overline{\lambda}}(x - y) f(y) \mathrm{d} y} \mathrm{d} \sigma(x) \\
      &= \int_{\mathbb{R}^3} \int_\Sigma G_\lambda(x - y) \varphi(x) \mathrm{d} \sigma(x) \overline{f(y)} \mathrm{d} y
    \end{split}
  \end{equation*}
  for all $\varphi \in L^2(\Sigma; \mathbb{C}^4)$ and 
  $f \in L^2(\mathbb{R}^3; \mathbb{C}^4)$, which yields the integral representation of $\gamma(\lambda)$.

  (ii) In order to compute $M(\lambda)$, $\lambda\in\rho(A_0)$, we use  
  $\gamma(\lambda) = ( I_4 + \lambda (A_0 - \lambda)^{-1}) \gamma(0)$ and $\gamma(0)=\gamma$; 
  cf. \eqref{einmal} and \eqref{def_gamma}. 
  It follows from the definition of $\Gamma_1$ in \eqref{def_triple} that   
  \begin{equation}\label{eq:decomposition_M}
    M(\lambda) \varphi = \Gamma_1 \gamma(\lambda) \varphi = \Gamma_1 \big( I_4 + \lambda (A_0 - \lambda)^{-1} \big) \gamma \varphi
        = M \varphi + \bigl(\lambda (A_0 - \lambda)^{-1} \gamma \varphi\bigr)|_\Sigma.
  \end{equation}
  We shall derive an integral formula for $(\lambda (A_0 - \lambda)^{-1} \gamma \varphi)|_\Sigma$ next.
  First note that for all $g \in C_c^\infty(\mathbb{R}^3; \mathbb{C}^4)$ and almost all $x \in \mathbb{R}^3$ we have
  \begin{equation*}
    \begin{split}
      \int_{\mathbb{R}^3} \big[G_\lambda(x - y) - G_0(x - y) \big] g(y) \mathrm{d} y 
          &= (A_0 - \lambda)^{-1} g(x) - A_0^{-1} g(x) \\
      &= \lambda (A_0 - \lambda)^{-1} A_0^{-1} g(x) \\
      &= \lambda \int_{\mathbb{R}^3} G_\lambda(x - y) \int_{\mathbb{R}^3} G_0(y - z) g(z) \mathrm{d} z \mathrm{d} y \\
      &= \lambda \int_{\mathbb{R}^3} g(z) \int_{\mathbb{R}^3} G_\lambda(x-y) G_0(y-z) \mathrm{d} y \mathrm{d} z \\
      &= \lambda \int_{\mathbb{R}^3} g(y) \int_{\mathbb{R}^3} G_\lambda(x-z) G_0(y-z) \mathrm{d} z \mathrm{d} y,
    \end{split}
  \end{equation*}
  where Fubini's theorem, a permutation of the variables $y$ and $z$ and the identity 
  $G_0(y - z) = G_0(z - y)$ were used in the last two steps. Hence, 
  \begin{equation*}
    G_\lambda(x - y) - G_0(x - y) = \lambda \int_{\mathbb{R}^3} G_\lambda(x-z) G_0(y-z) \mathrm{d} z
  \end{equation*}
  is true for almost all $x, y \in \mathbb{R}^3$. This can be extended by the continuity of $G_\lambda$ for all $x \neq y$.
  Employing again Fubini's theorem, we deduce for $x \in \Sigma$
  \begin{equation}\label{nimmdas}
    \begin{split}
      \lambda (A_0 - \lambda)^{-1} \gamma \varphi(x) 
          &= \lambda \int_{\mathbb{R}^3} G_\lambda(x - y) \int_{\Sigma} G_0(y-z) \varphi(z) \mathrm{d} \sigma(z) \mathrm{d} y \\
      &= \lambda \int_{\Sigma} \varphi(z) \int_{\mathbb{R}^3} G_\lambda(x - y) G_0(y-z) \mathrm{d} y \mathrm{d} \sigma(z) \\
      &= \int_{\Sigma} \varphi(z) \big[ G_\lambda(x - z) - G_0(x - z) \big] \mathrm{d} \sigma(z).
    \end{split}
  \end{equation}
  Since $\lambda(A_0 - \lambda)^{-1} \gamma \varphi|_\Sigma \in L^2(\Sigma; \mathbb{C}^4)$ the last term 
  is finite for almost all $x \in \Sigma$. Therefore, 
  $\varphi \big[ G_\lambda(x - \cdot) - G_0(x - \cdot) \big] \in L^1(\Sigma; \mathbb{C}^4)$
  for almost all $x \in \Sigma$. 
  Hence, for these $x$ we obtain from \eqref{eq:decomposition_M}, \eqref{def_M}, \eqref{nimmdas}, and dominated convergence that
  \begin{equation*}
    \begin{split}
      M(\lambda) \varphi(x) &= M \varphi (x) + \lambda (A_0 - \lambda)^{-1} \gamma \varphi(x) \\
      &= \lim_{\varepsilon \searrow 0} \int_{|x - y| > \varepsilon} G_{0}(x - y) \varphi(y) \mathrm{d} \sigma(y)\\
          &\qquad + \lim_{\varepsilon \searrow 0} \int_{|x-y|>\varepsilon} \big[ G_\lambda(x - y) - G_0(x - y) \big] 
          \varphi(y) \mathrm{d} \sigma(y) \\
      &= \lim_{\varepsilon \searrow 0} \int_{|x - y| > \varepsilon} G_{\lambda}(x - y) \varphi(y) \mathrm{d} \sigma(y);
    \end{split}
  \end{equation*}
  this shows the representation of $M(\lambda)$ in (ii). Note that the operators $M(\lambda)$, $\lambda\in\rho(A_0)$, 
  are everywhere defined and bounded since $\ran\Gamma_0=L^2(\Sigma;\mathbb C^4)$
  (see Section~\ref{section_quasi_boundary_triples}).
\end{proof}

In order to show self-adjointness and to discuss the spectral properties of Dirac operators with $\delta$-shell interactions
in the next section some more information on the Weyl function $M(\cdot)$ is necessary. 
Most of the results in the next two propositions are already contained in
\cite{AMV15, AMV16} in a similar form; for the convenience of the reader 
we collect and trace them back to those in \cite{AMV15, AMV16}.
First, we show that the Weyl function $M(\cdot)$ admits an extension  
to the points~$\lambda = \pm m c^2$; this extension is in accordance with the integral representation of 
$M(\cdot)$ in Proposition~\ref{proposition_gamma_Weyl}~(ii)
in the sense that the functions $G_{\pm m c^2}$ in Proposition~\ref{proposition_Weyl_function_mcsquare}~(i) 
below coincide with the 
  the integral kernel $G_\lambda$ of the resolvent
  of the free Dirac operator $A_0$ in \eqref{def_G_lambda} at $\lambda=\pm mc^2$. 
  The assertion in Proposition~\ref{proposition_Weyl_function_mcsquare}~(ii) is a variant of \cite[Lemma 3.2]{AMV15}.

\begin{prop} \label{proposition_Weyl_function_mcsquare}
  Let $\{L^2(\Sigma;\mathbb C^4), \Gamma_0, \Gamma_1 \}$ be the quasi boundary triple in Theorem~\ref{theorem_triple}
  with corresponding Weyl function $M(\cdot)$.
  Then the following assertions hold. 
  \begin{itemize}
    \item[\rm (i)] The limits 
    \begin{equation*}
      M(m c^2) := \lim_{\lambda \nearrow  m c^2} M(\lambda)
      \quad\text{and}\quad M(-m c^2) := \lim_{\lambda \searrow -m c^2} M(\lambda)
    \end{equation*}
    exist in the operator norm on $\mathfrak{B}\big(L^2(\Sigma; \mathbb{C}^4)\big)$. The corresponding limit operators 
    $M(\pm m c^2): L^2(\Sigma; \mathbb{C}^4) \rightarrow L^2(\Sigma; \mathbb{C}^4)$ are given by 
    \begin{equation*}
      \begin{split}
        M(\pm m c^2) \varphi(x) =  \lim_{\varepsilon \searrow 0} \int_{|x - y| > \varepsilon}
            G_{\pm m c^2}(x - y) \varphi(y) \mathrm{d} \sigma(y)&,\\
            \quad x \in \Sigma, ~\varphi \in L^2(\Sigma; \mathbb{C}^4)&,
      \end{split}
    \end{equation*}
    where the functions $G_{\pm m c^2}$ are defined by
    \begin{equation*}
      G_{\pm m c^2}(x) = \left( m (\beta \pm I_4) + \frac{i(\alpha\cdot x)}{c |x|^2}\right)
                     \frac{1}{4 \pi |x|}.
    \end{equation*}
    
    \item[\rm (ii)] The Weyl function $\lambda\mapsto M(\lambda)$ is uniformly bounded on $[-mc^2,mc^2]$, i.e.
    $$M_0 := \sup_{\lambda \in [-m c^2, m c^2]} \| M(\lambda) \|<\infty.$$
  \end{itemize} 
\end{prop}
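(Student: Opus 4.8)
The plan is to read off everything from the decomposition of the Weyl function already established in the proof of Proposition~\ref{proposition_gamma_Weyl}. By \eqref{eq:decomposition_M} and \eqref{nimmdas} we have, for every $\lambda\in\rho(A_0)$,
\[
 M(\lambda)=M+R(\lambda),\qquad R(\lambda)\varphi(x):=\int_\Sigma\bigl(G_\lambda(x-y)-G_0(x-y)\bigr)\varphi(y)\,\mathrm{d}\sigma(y),
\]
where $M$ is the fixed bounded operator from \eqref{def_M}. Since $M$ does not depend on $\lambda$, statement~(i) reduces to showing that $\lambda\mapsto R(\lambda)$ extends from the gap $(-mc^2,mc^2)$ to a function continuous up to $\lambda=\pm mc^2$ in the operator norm of $\mathfrak{B}\big(L^2(\Sigma;\mathbb{C}^4)\big)$, with limits $R(\pm mc^2)$ the integral operators with kernel $G_{\pm mc^2}(x-y)-G_0(x-y)$. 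The integral representation of $M(\pm mc^2)$ then follows automatically: the strongly singular term of $G_{\pm mc^2}$, namely $i(\alpha\cdot x)/(4\pi c|x|^3)$, is precisely the strongly singular term of $G_0$, so after excising $\{|x-y|\le\varepsilon\}$ and letting $\varepsilon\searrow 0$ the principal value defining $M$ and the absolutely convergent integral $R(\pm mc^2)$ recombine into the principal value of $G_{\pm mc^2}$ displayed in~(i).

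The key step is a uniform weakly singular estimate for $G_\lambda-G_{\lambda'}$. Putting $k(\lambda):=\sqrt{\lambda^2/c^2-(mc)^2}$ one has $k(\pm mc^2)=0$, and substituting this value into \eqref{def_G_lambda} reproduces exactly the stated $G_{\pm mc^2}$; in particular $G_\lambda\to G_{\pm mc^2}$ pointwise on $\{x\neq 0\}$ as $\lambda\to\pm mc^2$. I would expand \eqref{def_G_lambda} in powers of $|x|$ and observe that the only term singular of order $|x|^{-2}$ is $i(\alpha\cdot x)/(4\pi c|x|^3)$, which is independent of $\lambda$, while the term of order $|x|^{-1}$ equals $\bigl(\tfrac{\lambda}{c^2}I_4+m\beta\bigr)/(4\pi|x|)$, depending on $\lambda$ only through the smooth coefficient $\lambda/c^2$; all remaining contributions are bounded on $\Sigma$ (where $|x-y|$ is bounded) and depend continuously on $\lambda$. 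Absorbing the bounded remainders into $|x-y|^{-1}$ then yields
\[
 \bigl|G_\lambda(x-y)-G_{\lambda'}(x-y)\bigr|\le L(|\lambda-\lambda'|)\,|x-y|^{-1},\qquad x,y\in\Sigma,\ \lambda,\lambda'\in(-mc^2,mc^2),
\]
with $L(s)\to 0$ as $s\searrow 0$. Since $\sup_{x\in\Sigma}\int_\Sigma|x-y|^{-1}\,\mathrm{d}\sigma(y)<\infty$, the boundedness criterion of Appendix~\ref{appa} (cf.\ Proposition~\ref{proposition_integral_operators2}), in its Schur-test form, gives $\|R(\lambda)-R(\lambda')\|\le C\,L(|\lambda-\lambda'|)$. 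Hence $\{R(\lambda)\}$ is norm-Cauchy as $\lambda\nearrow mc^2$ and as $\lambda\searrow -mc^2$; the limits $R(\pm mc^2)$ exist in operator norm, and dominated convergence applied to the kernels identifies them with the asserted integral operators. This proves~(i).

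For~(ii) it then suffices to note that $\lambda\mapsto M(\lambda)$ is holomorphic, hence norm-continuous, on $(-mc^2,mc^2)\subset\rho(A_0)$ by Proposition~\ref{proposition_gamma_Weyl}~(ii), and that by part~(i) it extends to a norm-continuous $\mathfrak{B}\big(L^2(\Sigma;\mathbb{C}^4)\big)$-valued function on the compact interval $[-mc^2,mc^2]$; therefore $\lambda\mapsto\|M(\lambda)\|$ is continuous on a compact set and so bounded, i.e.\ $M_0<\infty$.

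I expect the main obstacle to be the bookkeeping in the Taylor expansion behind the weakly singular bound: one must separate the contributions of \eqref{def_G_lambda} at orders $|x|^{-2}$, $|x|^{-1}$ and $O(1)$ and, although $k(\lambda)$ is only Hölder (square-root) continuous at $\pm mc^2$, verify that the quantities actually entering $G_\lambda$ are controlled by $k(\lambda)^2=\lambda^2/c^2-(mc)^2$ together with remainders of the form $k(\lambda)^3|x|$ and $|x|^{-1}\sin(k(\lambda)|x|)$, all of which stay bounded on the compact set $\Sigma$ and depend continuously on $\lambda\in[-mc^2,mc^2]$ (so that one may take, for instance, $L(s)=O(\sqrt{s})$).
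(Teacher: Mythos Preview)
Your approach is correct and rests on the same core ingredient as the paper---a weakly singular bound on $G_\lambda-G_{\lambda'}$ of order $|x-y|^{-1}$ followed by a Schur-test estimate on $\Sigma$---but the organization differs in two useful ways. First, the paper fixes the target kernel $G_{mc^2}$ from the outset, writes $M(mc^2)-M(\lambda)=T_1(\lambda)+T_2(\lambda)+T_3(\lambda)$ with three explicit pieces, and bounds each $\|T_j(\lambda)\|$ via Proposition~\ref{proposition_integral_operators3}; you instead prove a uniform Cauchy estimate $\|R(\lambda)-R(\lambda')\|\le C\,L(|\lambda-\lambda'|)$ valid for all $\lambda,\lambda'$ in the gap. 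Your version yields norm continuity of $\lambda\mapsto M(\lambda)$ on the whole closed interval as a by-product, which makes your proof of~(ii) self-contained: continuity on the compact set $[-mc^2,mc^2]$ immediately gives $M_0<\infty$. The paper, by contrast, obtains only the endpoint limits in~(i) and then invokes \cite[Lemma~3.2]{AMV15} for the uniform bound on the open interval before assembling~(ii).

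Two small points of precision. The correct Schur-test reference for an operator $L^2(\Sigma)\to L^2(\Sigma)$ is Proposition~\ref{proposition_integral_operators3}, not Proposition~\ref{proposition_integral_operators2}. And your description of the $|x|^{-1}$ term in the expansion of $G_\lambda$ omits the contribution $k(\lambda)\,(\alpha\cdot x)/(4\pi c\,|x|^2)$ coming from the product of $-ik(\lambda)|x|$ with $i(\alpha\cdot x)/(c|x|^2)$; this piece is linear in $k(\lambda)$ rather than in $k(\lambda)^2$, and is exactly what forces $L(s)=O(\sqrt{s})$ near the endpoints. You do flag this in your final paragraph, so the argument survives---just make sure the expansion in the body of the proof is stated accurately.
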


\begin{proof}
  (i) We discuss only the case $\lambda \nearrow m c^2$, the statement for $\lambda \searrow -m c^2$
  can be proved in exactly the same way. We define the singular integral operator 
  \begin{equation*}
    \begin{split}
      &C \varphi(x) =  \lim_{\varepsilon \searrow 0} \int_{|x - y| > \varepsilon}
          G_{m c^2}(x - y) \varphi(y) \mathrm{d} \sigma(y),
          \quad x \in \Sigma, ~\varphi \in L^2(\Sigma; \mathbb{C}^4),
    \end{split}
  \end{equation*}
  and show that $M(\lambda)$ converges to $C$ in the operator norm as $\lambda \nearrow m c^2$. Note that for 
  $\lambda \in (-m c^2, m c^2)$ we have 
  \begin{equation*}
    \begin{split}
      C - M(\lambda) = T_1(\lambda) + T_2(\lambda) + T_3(\lambda),
    \end{split}
  \end{equation*}
  where for $j \in \{ 1, 2, 3 \}$ the operator $T_j(\lambda): L^2(\Sigma; \mathbb{C}^4) \rightarrow L^2(\Sigma; \mathbb{C}^4)$ 
  is of the form
  \begin{equation*}
    T_j(\lambda) \varphi(x) := \lim_{\varepsilon \searrow 0} \int_{|x-y| > \varepsilon} t_j^\lambda(x-y) \varphi(y) \mathrm{d} \sigma(y), 
    \quad x \in \Sigma,~\varphi \in L^2(\Sigma; \mathbb{C}^4),
  \end{equation*}
  with
  \begin{equation*}
    \begin{split}
      t_1^\lambda(x) &:= \left( m - \frac{\lambda}{c^2} \right) \frac{ e^{-\sqrt{(m c)^2-\lambda^2/c^2} |x|}}{4 \pi |x|} I_4; \\
      t_2^\lambda(x) &:= -\sqrt{(m c)^2 - \frac{\lambda^2}{c^2}}\, \frac{i(\alpha\cdot x)}{c |x|}\,
          \frac{ e^{-\sqrt{(m c)^2-\lambda^2/c^2} |x|}}{4 \pi |x|}; \\ 
      t_3^\lambda(x) &:= \left( \frac{i(\alpha \cdot x)}{c |x|^2}   + m(I_4 + \beta) \right)
          \frac{1 - e^{-\sqrt{(m c)^2-\lambda^2/c^2} |x|}}{4 \pi |x|}.
   \end{split}
  \end{equation*}
  We will see that the operators $T_1(\lambda), T_2(\lambda)$ and $T_3(\lambda)$ are bounded and everywhere defined,
  which yields then that also $C$ has this property.
  
  First, since 
  $|t_1^\lambda(x)| \leq \left( m - \lambda/c^2 \right) (4 \pi |x|)^{-1}$ for
  $x\in \mathbb{R}^3$, Proposition~\ref{proposition_integral_operators3} yields that there is a 
  constant $\kappa_1$ (independent of $\lambda$) such that
  \begin{equation} \label{estimate1}
    \| T_1(\lambda) \| \leq \kappa_1 \left( m - \frac{\lambda}{c^2} \right) 
    \rightarrow 0, \quad \lambda \nearrow m c^2.
  \end{equation}
  Similarly, as
  $|t_2^\lambda(x)| \leq \kappa_2 \sqrt{(m c)^2 - \lambda^2/c^2}~ |x|^{-1}$ for all $x\in \mathbb{R}^3$ and 
  a constant $\kappa_2$ we obtain from 
  Proposition~\ref{proposition_integral_operators3} a constant $\kappa_3$ (independent of $\lambda$) such that
  \begin{equation} \label{estimate2}
    \| T_2(\lambda) \| \leq \kappa_3 \sqrt{(m c)^2 - \frac{\lambda^2}{c^2}} 
    \rightarrow 0, \quad \lambda \nearrow m c^2.
  \end{equation}  
  Eventually, to get a suitable estimate for $t_3^\lambda$ we note first that 
  \begin{equation*}
    \begin{split}
      \Big|1 - e^{-\sqrt{(m c)^2-\lambda^2/c^2} |x|}\Big| &= 
        \left| \int_{-1}^0 \frac{\mathrm{d}}{\mathrm{d} t} e^{t \sqrt{(m c)^2-\lambda^2/c^2} |x|} \mathrm{d} t \right| \\
      &\leq \int_{-1}^0 \biggl| e^{t \sqrt{(m c)^2-\lambda^2/c^2} |x|} 
          \cdot \sqrt{(m c)^2-\frac{\lambda^2}{c^2}} ~|x| \biggr| \mathrm{d} t \\
      &\leq \sqrt{(m c)^2-\frac{\lambda^2}{c^2}} ~|x|.
    \end{split}
  \end{equation*}
  Thus, there exists a constant $\kappa_4$ such that 
  $|t_3^\lambda(x)| \leq \kappa_4 \sqrt{(m c)^2 - \lambda^2/c^2} ~\big( 1 + |x|^{-1} \big)$ for all $x \in \mathbb{R}^3$. 
  Therefore, we can apply Proposition~\ref{proposition_integral_operators3} and obtain some $\kappa_5$ (independent of $\lambda$) such that
  \begin{equation} \label{estimate3}
    \| T_3(\lambda) \| \leq \kappa_5 \sqrt{(m c)^2 - \frac{\lambda^2}{c^2}} 
    \rightarrow 0, \quad \lambda \nearrow m c^2.
  \end{equation}  
  Combing \eqref{estimate1}--\eqref{estimate3} we conclude 
  \begin{equation*}
    \| C - M(\lambda) \| \leq \| T_1(\lambda) \| + \| T_2(\lambda) \| + \| T_3(\lambda) \| \rightarrow 0,
    \quad \lambda \nearrow m c^2,
  \end{equation*}
  which shows the claim of statement (i).
  
  (ii) In the same way as in \cite[Lemma 3.2]{AMV15} (where the case $c=1$ is treated) 
  one verifies
  \begin{equation*}
    \sup_{\lambda \in (-m c^2, m c^2)} \| M(\lambda) \| < \infty.
  \end{equation*}
  Finally, since $M(m c^2) = \lim_{ \lambda \nearrow  m c^2} M(\lambda)$ and $M(-m c^2) = \lim_{ \lambda \searrow  -m c^2} M(\lambda)$
  by definition it follows that
  \begin{equation*}
    M_0 = \sup_{\lambda \in [-m c^2, m c^2]} \| M(\lambda) \| < \infty.
    \qedhere
  \end{equation*}
\end{proof}

In the following proposition we collect some spectral properties of the Weyl function $M(\cdot)$.
In particular, we give a detailed description of the spectrum of $M(\lambda)$ for $\lambda \in [-m c^2, m c^2]$,
which is needed to prove that the discrete spectrum of the Dirac operator with an electrostatic
$\delta$-shell interaction is finite.
The results are mostly contained in \cite[Lemma 3.2]{AMV16}, but for the convenience of the 
reader we add their proofs here.

\begin{prop} \label{proposition_spectrum_Weyl_function}
Let $\{L^2(\Sigma;\mathbb C^4), \Gamma_0, \Gamma_1 \}$ be the quasi boundary triple in Theorem~\ref{theorem_triple} with corresponding Weyl function $M(\cdot)$.
  Then the following assertions hold. 
  \begin{itemize}
    \item[\rm (i)] For all $\lambda \in \rho(A_0)$ there exists a compact operator $K(\lambda)$ in $L^2(\Sigma; \mathbb{C}^4)$ such that
    \begin{equation*}
      M(\lambda)^2 = \frac{1}{4 c^2} I_4 + K(\lambda).
    \end{equation*}
    \item[\rm (ii)] Let $M_0 := \sup_{\lambda \in [-m c^2, m c^2]} \| M(\lambda) \|$.
    Then, there exists an at most countable family of 
    continuous and non-decreasing functions $\mu_n: [-m c^2, m c^2] \rightarrow \big[ \frac{1}{4 c^2 M_0}, M_0 \big]$ 
    such that 
    \begin{equation*}
      \sigma(M(\lambda)) = \left\{ \pm \frac{1}{2 c} \right\} \cup \{ \mu_n(\lambda): n \in \mathbb{N} \} 
          \cup \left\{ -\frac{1}{4 c^2 \mu_n(\lambda)}: n \in \mathbb{N} \right\}.
    \end{equation*}
    Moreover, for any fixed $\lambda \in [-m c^2, m c^2]$ the number $\frac{1}{2 c}$ is the only 
    possible accumulation point of the sequence $(\mu_n(\lambda))$.
  \end{itemize}
\end{prop}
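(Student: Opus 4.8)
\textbf{Proof plan for Proposition~\ref{proposition_spectrum_Weyl_function}.}
The key structural fact is the quadratic identity in part~(i): $M(\lambda)^2 = \frac{1}{4c^2}I_4 + K(\lambda)$ with $K(\lambda)$ compact. To prove this I would start from the integral representation of $M(\lambda)$ in Proposition~\ref{proposition_gamma_Weyl}~(ii) and compose the strongly singular operator with itself. The principal symbol of the singular integral operator with kernel built from $\frac{i(\alpha\cdot x)}{c|x|^2}\frac{1}{4\pi|x|}$ is, up to smoothing terms, governed by $(\alpha\cdot\nu)$-type matrices on $\Sigma$; squaring it and using the anti-commutation relations \eqref{eq:commutation} (so that $(\alpha\cdot\xi)^2 = |\xi|^2 I_4$) produces the scalar $\frac{1}{4c^2}I_4$, while all off-diagonal and lower-order contributions are operators with weakly singular kernels, hence compact on $L^2(\Sigma;\mathbb{C}^4)$ by Proposition~\ref{proposition_integral_operators2}. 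Alternatively, and more cleanly, I would invoke that $c M(\lambda)$ corresponds to $C^\lambda_\sigma$ in \cite{AMV16} and quote \cite[Lemma~3.2]{AMV16}; but to keep the exposition self-contained I would prefer the direct computation, tracking carefully that the ``error'' is genuinely a Hilbert--Schmidt (or at least compact) operator on the surface.

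For part~(ii), the starting point is that $M(\lambda)$ is bounded and self-adjoint for real $\lambda\in[-mc^2,mc^2]$ (Proposition~\ref{proposition_Weyl_function_mcsquare}), so its spectrum is real and, by part~(i), $M(\lambda)^2$ differs from $\frac{1}{4c^2}I_4$ by a compact operator; hence $\sigma_{\mathrm{ess}}(M(\lambda)^2) = \{\frac{1}{4c^2}\}$ and therefore $\sigma_{\mathrm{ess}}(M(\lambda)) \subseteq \{\pm\frac{1}{2c}\}$. The discrete part of $\sigma(M(\lambda))$ thus consists of eigenvalues of finite multiplicity accumulating only at $\pm\frac{1}{2c}$. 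The crucial additional algebraic observation is that the quadratic relation forces a pairing of eigenvalues: if $\mu$ is an eigenvalue of $M(\lambda)$ with eigenvector $\varphi$, then from $M(\lambda)^2\varphi = (\frac{1}{4c^2} + \langle$ something $\rangle)\varphi$ one does \emph{not} immediately get the pairing, so instead I would exploit a chiral/anti-commutation symmetry: there should be a unitary (built from $\beta$ or $\alpha\cdot\nu$, or from the matrix structure of $G_\lambda$) that anti-commutes with $M(\lambda)$ modulo compacts, or better, an \emph{exact} relation of the form $M(\lambda)\,W + W\,M(\lambda) = \frac{1}{2c^2}W$ or similar coming from $G_{-mc^2}$ vs $G_{mc^2}$. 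The clean way: show that $M(\lambda)$ together with a fixed unitary involution $U$ (with $U M(\lambda) U^{-1} = \frac{1}{4c^2}M(\lambda)^{-1}$ on the subspace where $M(\lambda)$ is invertible) yields that eigenvalues come in pairs $\mu \leftrightarrow -\frac{1}{4c^2\mu}$. Then the positive eigenvalues in $(\frac{1}{2c},M_0]$ or $[\frac{1}{4c^2M_0},\frac{1}{2c})$ can be enumerated as $\mu_n(\lambda)$, and their partners are $-\frac{1}{4c^2\mu_n(\lambda)}$.

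To get \emph{continuity and monotonicity} of the $\mu_n(\lambda)$, I would combine two inputs: first, $\lambda\mapsto M(\lambda)$ is norm-continuous on $[-mc^2,mc^2]$ by Proposition~\ref{proposition_Weyl_function_mcsquare}, so the eigenvalues (away from the essential spectrum point $\frac{1}{2c}$, ordered appropriately) depend continuously on $\lambda$ by standard perturbation theory for the discrete spectrum of a norm-continuous family of self-adjoint operators; second, monotonicity comes from \eqref{bittesehr}, which gives $\frac{d}{d\lambda}(M(\lambda)\varphi,\varphi) = \|\gamma(\lambda)\varphi\|^2 > 0$ for $\varphi\neq 0$, i.e.\ $M(\cdot)$ is strictly increasing in the sense of quadratic forms on $\rho(A_0)\cap\mathbb{R}$. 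By the min-max principle, each eigenvalue branch (indexed so that the ordering is consistent, e.g.\ the $n$-th eigenvalue above $\frac{1}{2c}$, or below, counted with multiplicity) is then non-decreasing in $\lambda$; pushing the form-monotonicity through min-max on the relevant spectral subspaces gives non-decreasing $\mu_n$. I would then extend to the closed interval using that $M(\pm mc^2)$ are the norm-limits. Finally, the range $[\frac{1}{4c^2M_0},M_0]$ is immediate: the $\mu_n$ lie in $\sigma(M(\lambda))\cap(0,M_0]$, and their partners $-\frac{1}{4c^2\mu_n}$ must also lie in $[-M_0,0)$, forcing $\mu_n \geq \frac{1}{4c^2M_0}$.

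\textbf{Main obstacle.} The delicate point is the pairing of eigenvalues $\mu \leftrightarrow -\frac{1}{4c^2\mu}$: part~(i) alone only controls $M(\lambda)^2$ up to a compact error, which is not enough to conclude an exact symmetry of the \emph{discrete} eigenvalues. I expect the resolution to require producing an \emph{exact} algebraic identity relating $M(\lambda)$ to its inverse via a unitary coming from the Dirac matrix structure (most plausibly $U$ built from $\beta$ together with the reflection encoded in $G_{-mc^2}$ versus $G_{mc^2}$, exploiting $\beta\alpha_j = -\alpha_j\beta$), rather than just the ``mod compact'' version. Establishing this identity cleanly — and verifying it is exact and not merely asymptotic — is where the real work lies; the continuity/monotonicity arguments afterwards are routine perturbation theory plus \eqref{bittesehr}.
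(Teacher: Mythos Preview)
Your overall strategy is sound, and for part~(ii) you have correctly identified the genuine obstacle. Let me point out where the paper does things differently and more economically.

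\textbf{Part (i).} Your plan to compute the principal symbol directly and verify that the remainder is compact would work, but it is considerably more labor than necessary. The paper's route is slicker: it quotes the result only at the single point $\lambda=0$ (where it is already established in \cite[equation~(22) and Lemma~3.5]{AMV14} that $M(0)^2=\frac{1}{4c^2}I_4+K$ with $K$ compact), and then transfers to arbitrary $\lambda\in\rho(A_0)$ via the abstract identity \eqref{holladi}, which gives
\[
  M(\lambda)=M(0)+\lambda\,\gamma(0)^*\gamma(\lambda).
\]
Since $\gamma(0)^*$ and $\gamma(\lambda)$ are compact (Proposition~\ref{proposition_gamma_Weyl}), the correction $\lambda\,\gamma(0)^*\gamma(\lambda)$ is compact, and squaring $M(0)+\text{(compact)}$ immediately yields $M(\lambda)^2=\frac{1}{4c^2}I_4+K(\lambda)$ with $K(\lambda)$ compact. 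No symbol calculus for general $\lambda$ is needed; the abstract Weyl-function machinery does the transfer for free. You should consider this simplification.

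\textbf{Part (ii).} Your diagnosis of the main obstacle is exactly right: part~(i) is only ``mod compact'' and therefore cannot by itself force the exact pairing $\mu\leftrightarrow -\frac{1}{4c^2\mu}$ of discrete eigenvalues. The paper does not attempt to derive the required exact identity but simply invokes \cite[Theorem~3.3]{AMV15}. If you want to supply the argument yourself, your instinct about a unitary built from the Dirac matrices is correct: the relevant operator is multiplication by $\alpha\cdot\nu$ (with $\nu$ the unit normal on $\Sigma$), and the exact identity one needs is
\[
  \bigl(cM(\lambda)\bigr)\,(\alpha\cdot\nu)\,\bigl(cM(\lambda)\bigr)=-\tfrac{1}{4}\,(\alpha\cdot\nu),
\]
which, applied to an eigenvector $\varphi$ with $M(\lambda)\varphi=\mu\varphi$, shows that $(\alpha\cdot\nu)\varphi$ is an eigenvector with eigenvalue $-\frac{1}{4c^2\mu}$. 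This identity is \emph{exact}, not merely modulo compacts, and follows from the Plemelj-type jump relations for the Cauchy operator associated with the Dirac system together with $(\alpha\cdot\nu)^2=I_4$. Your speculation about $\beta$ and the $\pm mc^2$ reflection is a red herring here.

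The remainder of your plan for (ii)---essential spectrum via Weyl's theorem, continuity of eigenvalue branches via norm-continuity of $\lambda\mapsto M(\lambda)$, monotonicity from \eqref{bittesehr} and min--max, the range bound $[\frac{1}{4c^2M_0},M_0]$ from the pairing together with $\|M(\lambda)\|\le M_0$, and extension to the endpoints $\pm mc^2$ by norm-limit---matches the paper's argument and is routine.
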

\begin{proof}
  (i) First, it follows from \cite[equation (22) and Lemma 3.5]{AMV14} that
  \begin{equation*}
    M(0)^2 = \frac{1}{4 c^2} I_4 + K,
  \end{equation*}
  where $K$ is a compact operator in $L^2(\Sigma; \mathbb{C}^4)$ (note that $c M(0) = C_\sigma$ in the 
  notation of \cite[Lemma 3.1 and Lemma 3.3]{AMV14}, where $m$ in \cite[Lemma~3.1]{AMV14} has to be replaced by $m c$).
  For  $\lambda \in \rho(A_0)$ we have 
  \begin{equation*}
    M(\lambda) = M(0) + \lambda \gamma(0)^* \gamma(\lambda)
  \end{equation*}
  by  \eqref{holladi}, and as
  all operators on the right hand side are bounded and everywhere defined we get
  \begin{equation*}
    \begin{split}
      M(\lambda)^2 &= M(0)^2 + \lambda M(0) \gamma(0)^* \gamma(\lambda) 
          + \lambda \gamma(0)^* \gamma(\lambda) M(0) + \big( \lambda \gamma(0)^* \gamma(\lambda) \big)^2 \\
      &= \frac{1}{4 c^2} I_4 + K(\lambda),
    \end{split}
  \end{equation*}
  where
  \begin{equation*}
    K(\lambda) := K + \lambda M(0) \gamma(0)^* \gamma(\lambda) 
      + \lambda \gamma(0)^* \gamma(\lambda) M(0) + \big( \lambda \gamma(0)^* \gamma(\lambda) \big)^2
  \end{equation*}
  is compact, as $\gamma(0)^*$ and $\gamma(\lambda)$ are compact by Proposition \ref{proposition_gamma_Weyl}~(i).
  Hence, assertion~(i) of this proposition is true.

  In order to show (ii) assume first that $\lambda \in (-m c^2, m c^2)$.
  By (i) there exist at most countable sequences of eigenvalues $\mu_n^+(\lambda) \subset [0, \infty)$
  and $\mu_n^-(\lambda) \subset (-\infty, 0)$ such that
  \begin{equation*}
    \sigma(M(\lambda)) \subset \left\{ \pm \frac{1}{2 c} \right\} \cup \big\{ \mu_n^+(\lambda): n \in \mathbb{N} \big\}
                               \cup \big\{ \mu_n^-(\lambda): n \in \mathbb{N} \big\}
  \end{equation*}
  and the only possible accumulation point of $\mu_n^\pm(\lambda)$ is $\pm \frac{1}{2 c}$.
  Since $\lambda\mapsto M(\lambda)$ is analytic and monotonously increasing on the interval $(-m c^2, m c^2)$ according to \eqref{bittesehr} 
  the functions
  $\mu_n^\pm: (-m c^2, m c^2) \rightarrow \mathbb{R}$ can be chosen to be continuous and non-decreasing.
  In the proof of \cite[Theorem 3.3]{AMV15} (observe that the operator $C_\sigma^\lambda$ in \cite[Theorem 3.3]{AMV15} coincides 
  with $c M(\lambda)$, when $m$ in \cite{AMV15} is replaced by $mc $) it is shown that 
  \begin{equation*}
    \mu \in \sigma_{\mathrm{p}}(c M(\lambda)) \Leftrightarrow -\frac{1}{4 \mu} \in \sigma_{\mathrm{p}}(c M(\lambda)),
  \end{equation*}
  and hence
  \begin{equation*}
    \mu \in \sigma_{\mathrm{p}}(M(\lambda)) \Leftrightarrow -\frac{1}{4 c^2 \mu} \in \sigma_{\mathrm{p}}(M(\lambda)).
  \end{equation*}
  Thus, it follows that 
  \begin{equation*}
    \mu_n(\lambda):=\mu_n^+(\lambda) \in \left[\frac{1}{4 c^2 M_0}, M_0 \right]\quad\text{and}\quad \mu_n^-(\lambda) = -\frac{1}{4 c^2 \mu_n(\lambda)}.
  \end{equation*}
  In particular, both points
  $\pm \frac{1}{2 c}$ belong to $\sigma(M(\lambda))$ (they are accumulation points
  of $\mu_n^\pm(\lambda)$ or eigenvalues).
  Finally, since the operators $M(\pm m c^2)$ are the continuous extensions of 
  $M(\lambda)$, $\lambda \in (-m c^2, m c^2)$, in the operator norm (see Proposition~\ref{proposition_Weyl_function_mcsquare}~(i)) it follows that 
  the spectral properties of $M(\lambda)$ 
  extend by continuity to the endpoints $\pm mc^2$; cf. \cite[Satz 9.24]{W00}.
\end{proof}

\section{Dirac operators with $\delta$-shell interactions and their spectra}
\label{section_def_A_eta}

In this section we define Dirac operators with electrostatic $\delta$-shell interactions 
supported on surfaces in $\mathbb R^3$ and study their spectral properties.
The definition of the operator $A_\eta$ for constant interaction strength $\eta \neq \pm 2 c$ 
is via the quasi boundary triple in Theorem~\ref{theorem_triple}.

\begin{definition} \label{definition_delta_op}
Let $T$ be given by \eqref{def_T} and let $\{L^2(\Sigma;\mathbb C^4), \Gamma_0, \Gamma_1\}$ be the quasi boundary triple 
in Theorem~\ref{theorem_triple}.
The Dirac operator $A_\eta$ with an electrostatic $\delta$-shell interaction of strength 
$\eta \in \mathbb{R} \setminus \{ \pm 2 c \}$ supported on $\Sigma$ is defined by
\begin{equation*}
  A_\eta := T\upharpoonright \ker(\Gamma_0 + \eta \Gamma_1),
\end{equation*}
or, equivalently, admits the following more explicit representation:
\begin{equation*}
  A_\eta (f + \gamma \varphi) = A_0 f,
  \quad \dom A_\eta = \big\{ f + \gamma \varphi \in \dom T: \eta (f|_\Sigma + M \varphi) = -\varphi \big\}.
\end{equation*}
\end{definition}

The boundary condition for $f + \gamma \varphi \in \dom A_\eta$
corresponds to a certain jump condition:

\begin{remark}
  \rm Let $\Omega \subset \mathbb{R}^3$ be the bounded $C^\infty$-domain with $\partial \Omega = \Sigma$, 
  denote the outer unit normal vector field of $\Omega$ by $\nu$
  and let $h := f + \gamma \varphi \in \dom A_\eta$. It is known that for $x \in \Sigma$
  the nontangential limits
  \begin{equation*}
    h_+(x) := \lim_{\Omega \ni y \rightarrow x} h(y) = f(x) + M \varphi(x) - \frac{i}{2 c} \alpha \cdot \nu ~\varphi(x) 
  \end{equation*}
  and
  \begin{equation*}
    h_-(x) := \lim_{\mathbb{R}^3 \setminus \overline\Omega \ni y \rightarrow x} h(y)
        = f(x) + M \varphi(x) + \frac{i}{2 c} \alpha \cdot \nu ~\varphi(x)
  \end{equation*}
  exist and define functions in $L^2(\Sigma; \mathbb{C}^4)$; cf. \cite[Lemma~3.3]{AMV14}
  (note that $c \gamma = \Phi (\cdot)$ and $c M = C_\sigma$ with $\Phi(\cdot)$ 
  and $C_\sigma$ in the notation of \cite[Lemma~3.3]{AMV14}).
  Making use of $(\alpha \cdot \nu)^2 = I_4$ (this follows from~\eqref{eq:commutation})
  one verifies that the boundary condition $\eta (f|_\Sigma + M \varphi) = -\varphi$ is equivalent to
  the jump condition
  \begin{equation*}
    \frac{\eta}{2} \left( h_+ + h_- \right) = -i c \alpha \cdot \nu \left( h_+ - h_- \right).
  \end{equation*}
\end{remark}

Note that Green's identity for the quasi boundary triple  $\{L^2(\Sigma;\mathbb C^4), \Gamma_0, \Gamma_1\}$ shows
that $A_\eta$ is symmetric; cf. \eqref{abab}.
In the following we shall employ some abstract results on quasi boundary triples 
and their Weyl functions from Section~\ref{section_quasi_boundary_triples},
which together with the properties of the $\gamma$-field and Weyl function 
$M(\cdot)$ in Propositions~\ref{proposition_gamma_Weyl}--\ref{proposition_spectrum_Weyl_function} 
are the main ingredients in the proofs of Theorem~\ref{theorem_Krein} and
Theorem~\ref{theorem_resolvent_power_difference} below. We first verify that $I_4 + \eta M(\lambda)$ is boundedly invertible.

\begin{lem} \label{lemma_inv2}
  Let $\eta \in \mathbb{R} \setminus \{ \pm 2 c \}$ and let $\lambda \in \mathbb{C} \setminus \mathbb{R}$.
  Then, $I_4 + \eta M(\lambda)$ has a bounded and everywhere defined inverse.
\end{lem}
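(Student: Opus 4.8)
The plan is to show that $-1 \notin \sigma(\eta M(\lambda))$ for $\lambda \in \mathbb{C} \setminus \mathbb{R}$, which by Theorem~\ref{theorem_Krein_abstract} is equivalent to saying that $\lambda$ is not an eigenvalue of $A_\eta$ and that the inverse exists in a suitable sense; here, since $M(\lambda) \in \mathfrak{B}(L^2(\Sigma; \mathbb{C}^4))$ by Proposition~\ref{proposition_gamma_Weyl}~(ii), the statement $-1 \notin \sigma(\eta M(\lambda))$ is exactly the bounded invertibility of $I_4 + \eta M(\lambda)$. First I would reduce the problem to a symmetry/positivity argument: since $A_\eta$ is symmetric in $L^2(\mathbb{R}^3; \mathbb{C}^4)$ (as noted after the Remark, via Green's identity) and $\eta \in \mathbb{R}$, any non-real $\lambda$ cannot be an eigenvalue of $A_\eta$, so by Theorem~\ref{theorem_Krein_abstract} we already get $\ker(I_4 + \eta M(\lambda)) = \{0\}$. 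The remaining work is to upgrade injectivity to bounded invertibility, i.e.\ to rule out that $-1$ lies in the continuous or residual spectrum of $\eta M(\lambda)$.

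The key computational step is to exploit the identity \eqref{holladi}, which for the present generalized boundary triple reads $M(\lambda) - M(\overline\lambda)^* = (\lambda - \overline\lambda)\gamma(\overline\lambda)^*\gamma(\lambda)$, together with the standard fact that $M(\lambda)^* = M(\overline\lambda)$ (since $\ran\Gamma_0 = L^2(\Sigma; \mathbb{C}^4)$). Writing $\lambda = a + ib$ with $b \neq 0$, taking imaginary parts gives
\begin{equation*}
  \mathrm{Im}\, M(\lambda) = \frac{M(\lambda) - M(\lambda)^*}{2i} = b\, \gamma(\overline\lambda)^*\gamma(\lambda),
\end{equation*}
and one checks $\gamma(\overline\lambda)^* = \gamma(\lambda)^* $ composed appropriately so that $\gamma(\overline\lambda)^*\gamma(\lambda)$ has a definite sign on the real axis; more precisely, for $\varphi \in L^2(\Sigma; \mathbb{C}^4)$ one obtains from \eqref{holladi} and \eqref{bittesehr}-type reasoning that $\mathrm{Im}(M(\lambda)\varphi, \varphi) = b \|\gamma(\lambda)\varphi\|^2_{L^2(\mathbb{R}^3; \mathbb{C}^4)}$. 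Hence for $b > 0$,
\begin{equation*}
  \mathrm{Im}\big( (I_4 + \eta M(\lambda))\varphi, \varphi \big) = \eta\, b\, \|\gamma(\lambda)\varphi\|^2 \geq 0,
\end{equation*}
and the analogous statement with reversed sign for $b < 0$. Combined with the self-adjointness of $\gamma(\lambda)^*$'s role and the injectivity of $\gamma(\lambda)$ on a suitable set, this should give a lower bound $\|(I_4 + \eta M(\lambda))\varphi\| \geq C_\lambda \|\varphi\|$ — but only on the subspace where $\gamma(\lambda)\varphi \neq 0$, so a little care is needed when $\gamma(\lambda)\varphi = 0$.

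To close the gap when $\gamma(\lambda)\varphi = 0$, note that $\Gamma_0$ restricted to $\ker(T - \lambda)$ is a bijection onto $\ran\Gamma_0 = L^2(\Sigma; \mathbb{C}^4)$ with inverse $\gamma(\lambda)$, so $\gamma(\lambda)\varphi = 0$ forces $\varphi = \Gamma_0(0) = 0$; thus $\gamma(\lambda)$ is injective and the quadratic-form lower bound is genuine once one argues $\|\gamma(\lambda)\varphi\| \geq c_\lambda\|\varphi\|$. This last bound is where I expect the main obstacle: $\gamma(\lambda)$ is compact (Proposition~\ref{proposition_gamma_Weyl}~(i)), so it is \emph{not} bounded below, and the naive estimate fails. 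The resolution is the cleaner route used throughout this circle of ideas: one writes, for $\lambda \in \mathbb{C}^\pm$,
\begin{equation*}
  \mathrm{Im}\big( (I_4 + \eta M(\lambda))^{-1} g, g\big) \quad\text{vs.}\quad \mathrm{Im}(M(\lambda)g, g)
\end{equation*}
via the resolvent identity and shows directly that $I_4 + \eta M(\lambda)$ is \emph{Fredholm} — because $M(\lambda)^2 = \frac{1}{4c^2}I_4 + K(\lambda)$ with $K(\lambda)$ compact (Proposition~\ref{proposition_spectrum_Weyl_function}~(i)) implies $(I_4 + \eta M(\lambda))(I_4 - \eta M(\lambda)) = (1 - \tfrac{\eta^2}{4c^2})I_4 - \eta^2 K(\lambda)$, which is invertible modulo compacts precisely when $\eta \neq \pm 2c$ — and then invoke injectivity (from symmetry of $A_\eta$) plus $\mathrm{index} = 0$ to conclude surjectivity, hence bounded invertibility by the open mapping theorem. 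So the proof assembles: (1) Fredholmness of index zero from $\eta \neq \pm 2c$ and the compact-perturbation identity; (2) injectivity from the symmetry of $A_\eta$ and Theorem~\ref{theorem_Krein_abstract}; (3) conclude bijectivity and apply the closed graph / open mapping theorem.
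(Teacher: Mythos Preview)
Your final approach (items (1)--(3)) is essentially the paper's proof: injectivity of $I_4 + \eta M(\lambda)$ comes from the symmetry of $A_\eta$ via Theorem~\ref{theorem_Krein_abstract}, and surjectivity comes from the factorization $(I_4 + \eta M(\lambda))(I_4 - \eta M(\lambda)) = (1 - \tfrac{\eta^2}{4c^2})I_4 - \eta^2 K(\lambda)$ together with a Fredholm-type argument. The detour through $\mathrm{Im}\,M(\lambda)$ is, as you correctly diagnose, a dead end because $\gamma(\lambda)$ is compact.

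One small point: your claim that $I_4 + \eta M(\lambda)$ is ``Fredholm of index zero'' from the factorization is not quite automatic --- the product being a compact perturbation of a nonzero multiple of the identity tells you each factor is Fredholm and that the indices \emph{sum} to zero, but not that each is zero. The paper sidesteps this by applying the Fredholm alternative directly to the product $I_4 + d K(\lambda)$ (which is injective since both $A_\eta$ and $A_{-\eta}$ are symmetric), concluding it is surjective, and then observing $\ran(I_4 + d K(\lambda)) \subset \ran(I_4 + \eta M(\lambda))$. Your version can be patched just as easily: both $I_4 \pm \eta M(\lambda)$ are injective (symmetry of $A_{\pm\eta}$), hence each has index $\leq 0$; since the indices sum to zero, both vanish.
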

\begin{proof}
  First, we note that $I_4 + \eta M(\lambda)$ is injective for $\lambda \in \mathbb{C} \setminus \mathbb{R}$,
  as otherwise $\lambda$ would be a non-real eigenvalue of the symmetric operator $A_\eta$; 
  cf. Theorem~\ref{theorem_Krein_abstract}.
  It remains to prove that $I_4 + \eta M(\lambda)$ is surjective. Observe that by  
  Proposition~\ref{proposition_spectrum_Weyl_function}~(i)
  \begin{equation*}
    (I_4 + \eta M(\lambda)) (I_4 - \eta M(\lambda)) = I_4 - \eta^2 M(\lambda)^2
    = \left(1 - \frac{\eta^2}{4 c^2}\right) I_4 - \eta^2 K(\lambda),
  \end{equation*}
  where $K(\lambda)$ is a compact operator. Hence,
  \begin{equation*}
      (I_4 + \eta M(\lambda)) (I_4 - \eta M(\lambda)) = \left(1 - \frac{\eta^2}{4 c^2}\right) ( I_4 + d K(\lambda)),
      \quad d=-\frac{4 c^2 \eta^2}{4 c^2 - \eta^2},
  \end{equation*}
  and therefore $\ran(I_4 + d K(\lambda)) \subset \ran (I_4 + \eta M(\lambda))$.
  Since the left hand side in the above equation is injective
  (otherwise $\lambda$ would be a non-real eigenvalue of one the symmetric operators $A_{\pm \eta}$
  by Theorem~\ref{theorem_Krein_abstract})
  the same is true for the right hand side. Thus, the Fredholm
  alternative implies that $\ran(I_4 + d K(\lambda)) = L^2(\Sigma; \mathbb{C}^4)$.
  Hence, $I_4 + \eta M(\lambda)$ is also surjective, which yields the assertion.
\end{proof}

In the next theorem we verify the self-adjointness of $A_\eta$, provide a Krein type resolvent formula, and 
we investigate the discrete spectrum of $A_\eta$ in the gap $(-mc^2,mc^2)$
of the essential spectrum. It turns out in (iii) that the discrete spectrum in $(-mc^2,mc^2)$ is finite 
(and non-trivial for certain $\eta$ by Corollary~\ref{cor_large_eta}).
Moreover, for sufficiently small and sufficiently large $|\eta|$ the discrete spectrum of $A_\eta$ is empty by assertion~(iv).
While this behavior for small interaction strengths is similar as for Schr\"odinger operators with
$\delta$-interactions, such an effect does not occur for large $\eta$. This result and also 
assertion (ii) are known from \cite{AMV15}; here they follow immediately from Theorem~\ref{theorem_Krein_abstract} 
and Proposition~\ref{proposition_spectrum_Weyl_function}.

\begin{thm} \label{theorem_Krein}
Let $\{L^2(\Sigma;\mathbb C^4), \Gamma_0, \Gamma_1 \}$ be the quasi boundary triple in Theorem~\ref{theorem_triple} 
with corresponding $\gamma$-field $\gamma(\cdot)$ and 
Weyl function $M(\cdot)$. As in Proposition~\ref{proposition_Weyl_function_mcsquare}~{\rm (ii)} set 
$$M_0 := \sup_{\lambda \in [-m c^2, m c^2]} \| M(\lambda) \|.$$
Then the Dirac operator
  $A_\eta$ in Definition~\ref{definition_delta_op} is self-adjoint in $L^2(\mathbb R^3;\mathbb C^4)$ 
  for all $\eta \in \mathbb{R} \setminus \{ \pm 2 c \}$ and
    \begin{equation}\label{resform}
      (A_\eta - \lambda)^{-1} = (A_0 - \lambda)^{-1}
          - \gamma(\lambda) \big( I_4 + \eta M(\lambda) \big)^{-1} \eta \gamma(\overline{\lambda})^*
    \end{equation}
    for all $\lambda \in \rho(A_0)\cap\rho(A_\eta)$. Furthermore, the following assertions are true.
  \begin{itemize}
    \item[\rm (i)] $\sigma_{\mathrm{ess}}(A_\eta) = (-\infty, -m c^2] \cup [m c^2, \infty)$.
    \item[\rm (ii)] $\dim\ker(A_\eta-\lambda)=\dim\ker(I_4+\eta M (\lambda))$ for all $\lambda \in (-m c^2, m c^2)$.
    \item[\rm (iii)]
  $\sigma(A_\eta)\cap(-mc^2,mc^2)$ is finite for all $\eta \in \mathbb{R} \setminus \{ \pm 2 c \}$.
    \item[\rm (iv)] $\sigma(A_\eta)\cap(-mc^2,mc^2)=\emptyset$ either for $\vert\eta\vert < \frac{1}{M_0}$ or for 
    $\vert\eta\vert> 4 c^2 M_0$.
\end{itemize}
  \end{thm}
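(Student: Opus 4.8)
The plan is to deduce everything from Theorem~\ref{theorem_Krein_abstract} applied to the quasi boundary triple of Theorem~\ref{theorem_triple} with the bounded self-adjoint parameter $B = \eta I_4$, together with the structural facts about $M(\cdot)$ collected in Propositions~\ref{proposition_gamma_Weyl}--\ref{proposition_spectrum_Weyl_function}. First I would establish self-adjointness: by Lemma~\ref{lemma_inv2} the operator $I_4 + \eta M(\lambda)$ is boundedly invertible for every $\lambda \in \mathbb{C}\setminus\mathbb{R}$, so the last clause of Theorem~\ref{theorem_Krein_abstract} (with $B = \eta I_4 \in \mathfrak{B}(L^2(\Sigma;\mathbb{C}^4))$ self-adjoint) immediately gives that $A_\eta = A_{[\eta I_4]}$ is self-adjoint and that the resolvent formula \eqref{resform} holds for all $\lambda \in \rho(A_0)\cap\rho(A_\eta)$, where one uses $\gamma(\overline\lambda)^* = \Gamma_1(A_0-\lambda)^{-1}$ from \eqref{soso} and the explicit form of $\gamma(\lambda)$, $\gamma(\overline\lambda)^*$ from Proposition~\ref{proposition_gamma_Weyl}~(i).

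Next, for the essential spectrum (i): the resolvent difference in \eqref{resform} is, for any fixed $\lambda\in\mathbb{C}\setminus\mathbb{R}$, the product of the compact operators $\gamma(\lambda)$ and $\gamma(\overline\lambda)^*$ (compactness by Proposition~\ref{proposition_gamma_Weyl}~(i)) with the bounded operator $\eta(I_4+\eta M(\lambda))^{-1}$, hence compact. Therefore $A_\eta$ and $A_0$ have the same essential spectrum, and by \eqref{spectrum_A_0} this equals $(-\infty,-mc^2]\cup[mc^2,\infty)$. For (ii), the spectrum of $A_\eta$ inside the gap $(-mc^2,mc^2)\subset\rho(A_0)$ consists only of eigenvalues, and Theorem~\ref{theorem_Krein_abstract} gives $\lambda\in\sigma_{\mathrm p}(A_\eta)$ iff $-1\in\sigma_{\mathrm p}(\eta M(\lambda))$, with the kernel identification $\ker(A_\eta-\lambda)=\{\gamma(\lambda)\varphi:\varphi\in\ker(I_4+\eta M(\lambda))\}$; since $\gamma(\lambda)$ is injective on $\ker(I_4+\eta M(\lambda))$ (a nonzero element of this kernel cannot be mapped to $0$, as $\gamma(\lambda)\varphi=0$ forces $\varphi=\Gamma_0\gamma(\lambda)\varphi=0$), this yields the dimension equality.

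For the finiteness assertion (iii), I would argue as follows. Fix $\eta\neq\pm 2c$. By (ii) an eigenvalue of $A_\eta$ in $(-mc^2,mc^2)$ is precisely a point $\lambda$ where $-\frac{1}{\eta}\in\sigma_{\mathrm p}(M(\lambda))$. Using the spectral description of $M(\lambda)$ from Proposition~\ref{proposition_spectrum_Weyl_function}~(ii), $\sigma(M(\lambda))=\{\pm\frac{1}{2c}\}\cup\{\mu_n(\lambda)\}\cup\{-\frac{1}{4c^2\mu_n(\lambda)}\}$ with $\mu_n:[-mc^2,mc^2]\to[\frac{1}{4c^2M_0},M_0]$ continuous and non-decreasing, and $\frac{1}{2c}$ the only accumulation point of $(\mu_n(\lambda))$. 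Since $-\frac1\eta\neq\pm\frac1{2c}$ for $\eta\neq\pm2c$, the equation $-\frac1\eta=\mu_n(\lambda)$ or $-\frac1\eta=-\frac1{4c^2\mu_n(\lambda)}$ can hold only for those finitely many indices $n$ whose range $[\mu_n(-mc^2),\mu_n(mc^2)]$ (resp. the transformed range) contains $-\frac1\eta$ — finitely many because the $\mu_n(\lambda)$ accumulate only at $\frac{1}{2c}\neq-\frac1\eta$, so only finitely many of the intervals $[\mu_n(-mc^2),\mu_n(mc^2)]$ meet a small neighbourhood of $-\frac1\eta$ avoided by the others. For each such $n$, monotonicity of $\mu_n$ forces the solution set of $\mu_n(\lambda)=-\frac1\eta$ to be an interval; combined with the analyticity and strict monotonicity built into \eqref{bittesehr} (the derivative of $(M(\lambda)\varphi,\varphi)$ is strictly positive), this solution set is a single point, not a whole subinterval. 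Summing eigenvalue multiplicities over the finitely many contributing indices $n$ then gives $\sharp\{\sigma_{\mathrm d}(A_\eta)\cap(-mc^2,mc^2)\}<\infty$. The main obstacle here is making the interplay between analyticity/strict monotonicity and the bookkeeping of the countably many branches $\mu_n$ rigorous — in particular ruling out that infinitely many branches cross the level $-\frac1\eta$, and excluding that any crossing happens along a whole interval (which is where \eqref{bittesehr} is essential).

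Finally, (iv): if $|\eta|<\frac{1}{M_0}$ then for every $\lambda\in[-mc^2,mc^2]$ one has $\|\eta M(\lambda)\|\le|\eta|M_0<1$, so $I_4+\eta M(\lambda)$ is boundedly invertible via Neumann series and $-1\notin\sigma_{\mathrm p}(\eta M(\lambda))$; by (ii) and (i) there are no eigenvalues in the open gap and the gap lies in $\rho(A_0)$, hence $\sigma(A_\eta)\cap(-mc^2,mc^2)=\emptyset$. If $|\eta|>4c^2M_0$, write $I_4+\eta M(\lambda)=\eta M(\lambda)(\frac1\eta I_4+M(\lambda))$; the factor $\eta M(\lambda)$ is invertible because by Proposition~\ref{proposition_spectrum_Weyl_function}~(ii) $0\notin\sigma(M(\lambda))$ (the spectrum lies in $\{\pm\frac1{2c}\}\cup[\frac1{4c^2M_0},M_0]\cup[-M_0,-\frac1{4c^2M_0}]$, bounded away from $0$), and $\frac1\eta I_4+M(\lambda)$ is invertible because $|\frac1\eta|<\frac{1}{4c^2M_0}$ so $-\frac1\eta$ lies in the gap $(-\frac1{4c^2M_0},\frac1{4c^2M_0})$ of $\sigma(M(\lambda))$ around $0$; hence again $-1\notin\sigma_{\mathrm p}(\eta M(\lambda))$ and the gap is free of eigenvalues. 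Together with the above this completes the proof of Theorem~\ref{theorem_Krein}.
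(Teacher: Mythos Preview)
Your proposal is correct and matches the paper's approach for self-adjointness, the resolvent formula, and items (i) and (ii); for (iv) the paper argues more directly by simply observing that under either hypothesis $-1/\eta$ lies outside $\sigma(M(\lambda))\subset[-M_0,-\tfrac{1}{4c^2M_0}]\cup[\tfrac{1}{4c^2M_0},M_0]$, which is equivalent to but shorter than your Neumann-series/factorization route.

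For (iii) the paper handles precisely the point you flag as the ``main obstacle'' by a clean contradiction argument rather than a direct count. Assuming infinitely many eigenvalues $\lambda_n\nearrow mc^2$ (say, with $\eta<0$), each yields a branch index $k(n)$ with $\mu_{k(n)}(\lambda_n)=-1/\eta$; monotonicity of $\mu_{k(n)}$ then gives $\mu_{k(n)}(-mc^2)\le -1/\eta\le\mu_{k(n)}(mc^2)$, so the infinite sequences $(\mu_{k(n)}(-mc^2))\subset\sigma(M(-mc^2))$ and $(\mu_{k(n)}(mc^2))\subset\sigma(M(mc^2))$ accumulate in $[\tfrac{1}{4c^2M_0},-1/\eta]$ and in $[-1/\eta,M_0]$ respectively. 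Since by Proposition~\ref{proposition_spectrum_Weyl_function}~(ii) the only possible accumulation point of the positive part of $\sigma(M(\pm mc^2))$ is $\tfrac{1}{2c}$, this forces $-1/\eta=\tfrac{1}{2c}$, contradicting $\eta\neq -2c$. This bypasses the branch-bookkeeping entirely and also makes your appeal to \eqref{bittesehr} unnecessary: once only finitely many branches are in play, a branch crossing the level $-1/\eta$ along a whole subinterval would already contradict the discreteness of $\sigma(A_\eta)\cap(-mc^2,mc^2)$ established in (i), so strict monotonicity need not be invoked separately.
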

\begin{proof}
The fact that $A_\eta$ is self-adjoint in $L^2(\mathbb R^3;\mathbb C^4)$ and that the resolvent of $A_\eta$ is given by \eqref{resform} are immediate consequences
of Theorem~\ref{theorem_Krein_abstract}
and Lemma~\ref{lemma_inv2}.

 (i) The resolvent formula \eqref{resform} implies that
  $(A_\eta - \lambda)^{-1} - (A_0 - \lambda)^{-1}$ is compact for all $\lambda\in\rho(A_0)\cap\rho(A_\eta)$
  since $\gamma(\lambda)$ and $\gamma\big(\overline{\lambda}\big)^*$ are compact
  by Proposition~\ref{proposition_gamma_Weyl} and $(I_4 + \eta M(\lambda) )^{-1}\eta$ is bounded by Lemma~\ref{lemma_inv2}.
  This yields
  \begin{equation*}
    \sigma_{\mathrm{ess}}(A_\eta) = \sigma_{\mathrm{ess}}(A_0) =\sigma(A_0)= (-\infty, -m c^2] \cup [m c^2, \infty).
  \end{equation*}

  (ii) This claim follows from Theorem~\ref{theorem_Krein_abstract}.

  Assertion (iii) will be shown by an indirect proof. Assume that for some 
  interaction strength $\eta \in \mathbb{R} \setminus \{ \pm 2 c \}$ 
  there are infinitely many discrete eigenvalues of $A_\eta$ in the gap
  $(-mc^2,mc^2)$ of the essential spectrum. Then $mc^2$ or $-mc^2$ is an accumulation point 
  and in the following we discuss the case $\eta<0$ and that there is a sequence
  $(\lambda_n)\subset\sigma(A_\eta)\cap(-mc^2,mc^2)$ which tends to $mc^2$;
  the cases with $\eta>0$ or eigenvalues accumulating to $-mc^2$ can be treated analogously.
  Recall from Proposition~\ref{proposition_spectrum_Weyl_function}~(ii) that
  \begin{equation*}
    \sigma(M(\lambda)) = \left\{ \pm \frac{1}{2 c} \right\} \cup \{ \mu_n(\lambda): n \in \mathbb{N} \} 
        \cup \left\{ -\frac{1}{4 c^2 \mu_n(\lambda)}: n \in \mathbb{N} \right\},
  \end{equation*}
  where $\mu_n: [-m c^2, m c^2] \rightarrow \big[ \frac{1}{4 c^2 M_0}, M_0 \big]$
  are continuous and non-decreasing functions.
  Since $0<-\frac{1}{\eta}\in\sigma_{\mathrm{p}}(M(\lambda_n))$ by (ii) and
  $-\frac{1}{\eta}\not= \frac{1}{2c}$ for each $n\in\mathbb N$ there exists $k(n)$ such that
  $\mu_{k(n)}(\lambda_n)=-\frac{1}{\eta}$. By monotonicity we have 
  \begin{equation*}
    \frac{1}{4 c^2 M_0} \leq \mu_{k(n)}(-mc^2) \leq -\frac{1}{\eta}\quad\text{and}\quad 
    -\frac{1}{\eta} \leq \mu_{k(n)}(mc^2)\leq M_0
  \end{equation*}
  for all $n\in\mathbb N$
  and hence the infinite sequences $(\mu_{k(n)}(-mc^2))\subset\sigma(M(-mc^2))$ and 
  $(\mu_{k(n)}(mc^2))\subset\sigma(M(mc^2))$
  both have an accumulation point in $\big[\frac{1}{4 c^2 M_0},-\frac{1}{\eta}\big]$ 
  and $\big[-\frac{1}{\eta},M_0\big]$, respectively.
  Since $\frac{1}{2c}$ is the only possible accumulation
  point of $\sigma(M(-mc^2))$ and $\sigma(M(mc^2))$ in $\big[\frac{1}{4 c^2 M_0}, M_0\big]$ this is a contradiction
  to $\eta \neq -2 c$. 
  It follows that $\sigma(A_\eta)\cap(-mc^2,mc^2)$ is finite.

  (iv) For $\eta\not\in\{0,\pm 2c\}$ it follows from (ii) that $\lambda\in (-mc^2,mc^2)$ is an eigenvalue of $A_\eta$
  if and only if  $-\frac{1}{\eta}$ is an eigenvalue of $M(\lambda)$. Hence the assertion follows
  from the fact that 
  $\sigma(M(\lambda)) \subset [ -M_0, -\frac{1}{4 c^2 M_0} ] \cup [ \frac{1}{4 c^2 M_0}, M_0 ]$,
  see Proposition~\ref{proposition_spectrum_Weyl_function}~(ii).
\end{proof}

Besides the qualitative properties of the spectrum of $A_\eta$ in Theorem~\ref{theorem_Krein} we establish a trace class result
important for mathematical scattering theory in Theorem~\ref{theorem_resolvent_power_difference} below.
We keep the notations simple and skip the respective spaces in the symbols of (weak)
Schatten-von Neumann ideals $\mathfrak S_{p,\infty}$. We also note the useful property
\begin{equation}\label{hurra}
\mathfrak S_{1/x,\infty}\cdot\mathfrak S_{1/y,\infty}=\mathfrak S_{1/(x+y),\infty},\qquad x,y>0,
\end{equation}
see, e.g. \cite[Lemma 2.3~(iii)]{BLL13_1}.
In the next preparatory lemma we first provide some useful Schatten-von Neumann estimates for the derivatives 
of the $\gamma$-field and Weyl function
in Proposition~\ref{proposition_gamma_Weyl}.

\begin{lem} \label{lemma_compact_gamm_M}
  Let $\lambda \in \mathbb{C} \setminus \mathbb{R}$
  and let the operators $\gamma(\lambda)$ and $M(\lambda)$
  be given as in Proposition \ref{proposition_gamma_Weyl}. Then for all $k\in\mathbb{N}_0$ one has
  \begin{equation*}
   \frac{\mathrm{d}^k}{\mathrm{d} \lambda^k} \gamma(\lambda)  \in \mathfrak{S}_{4/(2k+1), \infty},
   \quad\text{and}\quad
   \frac{\mathrm{d}^k}{\mathrm{d} \lambda^k} \gamma\big(\overline\lambda\big)^* \in \mathfrak{S}_{4/(2k+1), \infty}.
  \end{equation*}
  Moreover, it holds for all $k \in \mathbb{N}$ 
  \begin{equation*}
       \frac{\mathrm{d}^k}{\mathrm{d} \lambda^k} M(\lambda) \in \mathfrak{S}_{2/k, \infty}.
  \end{equation*}
\end{lem}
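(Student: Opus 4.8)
The plan is to reduce all three assertions to the mapping properties of the trace operator together with a single fact about Schatten ideals of Sobolev embeddings on $\Sigma$: since $\Sigma$ is a compact two‑dimensional $C^\infty$‑manifold, the Weyl asymptotics of its Laplace–Beltrami operator force the embedding $H^s(\Sigma;\mathbb C^4)\hookrightarrow L^2(\Sigma;\mathbb C^4)$ to have singular values decaying like $n^{-s/2}$, hence this embedding belongs to $\mathfrak S_{2/s,\infty}$ for every $s>0$; see, e.g., \cite{BLL13_1}. I expect this numerology to be the only genuinely non‑bookkeeping ingredient, the rest being an application of \eqref{hurra}, of the ideal property of $\mathfrak S_{p,\infty}$, and of its invariance under taking adjoints.

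First I would use that $\{L^2(\Sigma;\mathbb C^4),\Gamma_0,\Gamma_1\}$ is a generalized boundary triple, so that $\ran\Gamma_0=L^2(\Sigma;\mathbb C^4)$ and the identity \eqref{jaha}, the accompanying formula $\tfrac{\mathrm d^k}{\mathrm d\lambda^k}\gamma(\overline\lambda)^*=k!\,\Gamma_1(A_0-\lambda)^{-k-1}$, and \eqref{spitzenklasse} hold as identities between everywhere defined bounded operators. Taking the adjoint in \eqref{jaha} and inserting $\gamma(\lambda)^*=\Gamma_1(A_0-\overline\lambda)^{-1}$ from \eqref{soso} yields $\bigl(\tfrac{\mathrm d^k}{\mathrm d\lambda^k}\gamma(\lambda)\bigr)^*=k!\,\Gamma_1(A_0-\overline\lambda)^{-k-1}$. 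For any $\mu\in\mathbb C\setminus\mathbb R$ the operator $(A_0-\mu)^{-k-1}$ maps $L^2(\mathbb R^3;\mathbb C^4)$ boundedly into $H^{k+1}(\mathbb R^3;\mathbb C^4)$ (by the ellipticity of $A_0$, cf.\ \eqref{A_0_square}), on which $\Gamma_1$ acts as the ordinary trace into $H^{k+1/2}(\Sigma;\mathbb C^4)$; composing with the embedding $H^{k+1/2}(\Sigma;\mathbb C^4)\hookrightarrow L^2(\Sigma;\mathbb C^4)$, which lies in $\mathfrak S_{2/(k+1/2),\infty}=\mathfrak S_{4/(2k+1),\infty}$, and using the ideal property gives $\Gamma_1(A_0-\mu)^{-k-1}\in\mathfrak S_{4/(2k+1),\infty}$. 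Hence $\tfrac{\mathrm d^k}{\mathrm d\lambda^k}\gamma(\overline\lambda)^*\in\mathfrak S_{4/(2k+1),\infty}$, and, taking adjoints, $\tfrac{\mathrm d^k}{\mathrm d\lambda^k}\gamma(\lambda)\in\mathfrak S_{4/(2k+1),\infty}$ as well; the case $k=0$ is included, i.e.\ $\gamma(\lambda),\gamma(\overline\lambda)^*\in\mathfrak S_{4,\infty}$.

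For the Weyl function I would start from \eqref{spitzenklasse}, split off one resolvent, and write, for $k\ge1$,
\[
 \frac{\mathrm d^k}{\mathrm d\lambda^k}M(\lambda)=k!\,\bigl[\Gamma_1(A_0-\lambda)^{-1}\bigr]\bigl[(A_0-\lambda)^{-(k-1)}\gamma(\lambda)\bigr].
\]
By \eqref{soso} the first factor equals $k!\,\gamma(\overline\lambda)^*\in\mathfrak S_{4,\infty}=\mathfrak S_{1/(1/4),\infty}$, and by \eqref{jaha} the second factor equals $\tfrac{1}{(k-1)!}\tfrac{\mathrm d^{k-1}}{\mathrm d\lambda^{k-1}}\gamma(\lambda)\in\mathfrak S_{4/(2k-1),\infty}=\mathfrak S_{1/((2k-1)/4),\infty}$. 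Applying \eqref{hurra} with $x=\tfrac14$ and $y=\tfrac{2k-1}{4}$ (both positive) gives $x+y=\tfrac k2$, hence $\tfrac{\mathrm d^k}{\mathrm d\lambda^k}M(\lambda)\in\mathfrak S_{1/(k/2),\infty}=\mathfrak S_{2/k,\infty}$, which is the claim; the case $k=1$ fits this scheme with the second factor equal to $\gamma(\lambda)$ itself.

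The delicate point is not in the operator‑theoretic manipulations but in pinning down the Schatten class of the Sobolev embedding on the two‑dimensional surface $\Sigma$ and then tracking the exponents carefully through \eqref{hurra}; once the identities of Section~\ref{section_quasi_boundary_triples} are combined with the elliptic regularity encoded in \eqref{A_0_square} and the trace theorem, everything else is routine.
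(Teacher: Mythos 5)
Your proposal is correct and follows essentially the same route as the paper: both rest on the abstract derivative formulas \eqref{jaha}, \eqref{soso}, \eqref{spitzenklasse}, on the elliptic regularity $\dom A_0^{k+1}=H^{k+1}(\mathbb{R}^3;\mathbb{C}^4)$ from \eqref{A_0_square}, on the trace mapping into $H^{k+1/2}(\Sigma;\mathbb{C}^4)$, on the weak Schatten class $\mathfrak{S}_{4/(2k+1),\infty}$ coming from the two-dimensionality of $\Sigma$, and finally on \eqref{hurra}. The only cosmetic differences are that you invoke the Sobolev embedding plus the ideal property where the paper cites the packaged statement \cite[Lemma~4.7]{BLL13_1} for operators whose range lies in $H^{k+1/2}(\Sigma;\mathbb{C}^4)$, and that in the $M(\lambda)$ step you shuffle $(A_0-\lambda)^{-(k-1)}$ onto the $\gamma(\lambda)$-factor rather than keeping it with $\Gamma_1$; the exponent bookkeeping through \eqref{hurra} is identical either way.
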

\begin{proof}
We shall use that for all $\lambda\in\rho(A_0)$ the relations
\begin{equation}\label{klar}
 \frac{d^k}{d\lambda^k}\gamma\big(\overline\lambda\big)^*=k!\Gamma_1 (A_0-\lambda)^{-k-1},\qquad k=0,1,\dots,
\end{equation}
 and
 \begin{equation}\label{klar2}
 \frac{d^k}{d\lambda^k} M(\lambda)=k!\Gamma_1(A_0-\lambda)^{-k}\gamma(\lambda),\qquad k=1,2,\dots,
\end{equation}
hold; see \eqref{jaha} and \eqref{spitzenklasse}. 
It follows from \eqref{A_0_square} and $\dom\Delta^l=H^{2l}(\mathbb R^3; \mathbb{C})$
that $\dom A_0^{k+1}=H^{k+1}(\mathbb R^3;\mathbb{C}^4)$ and hence 
$\ran(A_0-\lambda)^{-k-1}=H^{k+1}(\mathbb R^3; \mathbb C^4)$. 
Therefore, $\ran(\Gamma_1(A_0-\lambda)^{-k-1}) =  H^{k+1/2}(\Sigma; \mathbb{C}^4)$
  and \cite[Lemma 4.7]{BLL13_1} yields
  \begin{equation}\label{hoho}
    \Gamma_1(A_0-\lambda)^{-k-1} \in \mathfrak S_{4/(2k+1),\infty},\qquad k=0,1,\dots.
  \end{equation}
  Now the second assertion of the lemma follows from \eqref{klar} and
  taking adjoint shows the first statement.
  The assertion on $\frac{\mathrm{d}^k}{\mathrm{d} \lambda^k}M(\lambda)$
  follows from \eqref{klar2}, \eqref{hoho}, $\gamma(\lambda)\in\mathfrak S_{4,\infty}$ and \eqref{hurra}.
\end{proof}

In the next theorem we prove that the difference of the
third powers of the resolvents of $A_\eta$ and $A_0$ is a trace class operator, and 
we provide a formula for the trace in terms of the Weyl function $M(\cdot)$. Note that the trace on the left hand side in \eqref{tftf} 
is taken in the space $L^2(\mathbb R^3;\mathbb C^4)$, whereas the trace on the right hand side is in the boundary space $L^2(\Sigma;\mathbb C^4)$.
We refer the reader to \cite{BLL13_2, GHSZ96, GMZ07} and the references therein for related results on elliptic differential operators, 
Fredholm perturbation determinants and other types of trace formulae for Schr\"{o}dinger operators.

\begin{thm} \label{theorem_resolvent_power_difference}
  Let $\eta \in \mathbb{R} \setminus \{ \pm 2 c \}$ and let $M(\cdot)$ be as in Proposition \ref{proposition_gamma_Weyl}. Then for all $\lambda \in \rho(A_0)\cap\rho(A_\eta)$ the operator
  \begin{equation*}
    (A_\eta - \lambda)^{-3} - (A_0 - \lambda)^{-3}
  \end{equation*}
  belongs to the trace class ideal and
  \begin{equation}\label{tftf}
    \mathrm{tr}\left[ (A_\eta - \lambda)^{-3} - (A_0 - \lambda)^{-3} \right]=
        -\frac{1}{2} \mathrm{tr}\left[ \frac{\mathrm{d}^{2}}{\mathrm{d} \lambda^{2} }
        \left( (I_4 + \eta M(\lambda))^{-1} \eta \frac{\mathrm{d}}{\mathrm{d}\lambda}M(\lambda) \right) \right]
  \end{equation}
  holds.
  In particular,  the wave operators for the pair $\{ A_\eta, A_0 \}$ exist and are complete, and
  the absolutely continuous parts of $A_\eta$ and $A_0$ are unitarily equivalent.
\end{thm}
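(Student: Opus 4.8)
The plan is to differentiate the Krein-type resolvent formula \eqref{resform} twice in $\lambda$ and then to read off both assertions with the help of the Schatten--von Neumann bounds in Lemma~\ref{lemma_compact_gamm_M}. I set
\[
  R_\eta(\lambda):=(A_\eta-\lambda)^{-1},\qquad R_0(\lambda):=(A_0-\lambda)^{-1},\qquad N(\lambda):=\bigl(I_4+\eta M(\lambda)\bigr)^{-1}\eta,
\]
so that, by \eqref{resform}, $E(\lambda):=R_\eta(\lambda)-R_0(\lambda)=-\gamma(\lambda)N(\lambda)\gamma(\overline\lambda)^*$ on $\rho(A_0)\cap\rho(A_\eta)$, where $N(\cdot)$ is a bounded holomorphic operator function there (cf.\ Theorem~\ref{theorem_Krein}). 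Since $\frac{d^2}{d\lambda^2}R_\eta(\lambda)=2R_\eta(\lambda)^3$ and likewise for $R_0$, differentiating twice gives
\[
  (A_\eta-\lambda)^{-3}-(A_0-\lambda)^{-3}=\tfrac12\,\frac{d^2}{d\lambda^2}E(\lambda)=-\tfrac12\,\frac{d^2}{d\lambda^2}\bigl(\gamma(\lambda)N(\lambda)\gamma(\overline\lambda)^*\bigr),
\]
and expanding by the Leibniz rule produces a finite sum of terms $\gamma^{(j)}(\lambda)N^{(k)}(\lambda)(\gamma(\overline\lambda)^*)^{(l)}$ with $j+k+l=2$. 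Differentiating the identity $(I_4+\eta M(\lambda))(I_4+\eta M(\lambda))^{-1}=I_4$ shows that $N^{(k)}(\lambda)$ is bounded for $k=0$ and, by \eqref{hurra}, lies in $\mathfrak S_{2/k,\infty}$ for $k\geq1$; combining this with $\gamma^{(j)}(\lambda)\in\mathfrak S_{4/(2j+1),\infty}$ and $(\gamma(\overline\lambda)^*)^{(l)}\in\mathfrak S_{4/(2l+1),\infty}$ from Lemma~\ref{lemma_compact_gamm_M} and with \eqref{hurra}, each such term lies in $\mathfrak S_{1/s,\infty}$ where $\tfrac1s=\tfrac{2j+1}{4}+\tfrac k2+\tfrac{2l+1}{4}=\tfrac{2(j+k+l)+2}{4}=\tfrac32$ (the case $k=0$ being covered by $\mathfrak S_{p,\infty}\cdot\mathfrak B\subset\mathfrak S_{p,\infty}$). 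Hence every term, and therefore $(A_\eta-\lambda)^{-3}-(A_0-\lambda)^{-3}$, belongs to $\mathfrak S_{2/3,\infty}\subset\mathfrak S_1$; this settles the trace class claim.

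For the trace identity I would take the trace of the Leibniz expansion term by term. Each summand $\gamma^{(j)}(\lambda)N^{(k)}(\lambda)(\gamma(\overline\lambda)^*)^{(l)}$ is trace class, and by the same Schatten bookkeeping so is the cyclically rotated operator $N^{(k)}(\lambda)(\gamma(\overline\lambda)^*)^{(l)}\gamma^{(j)}(\lambda)$, so cyclicity of the trace may be applied to each term. Using $\gamma^{(j)}(\lambda)=j!\,(A_0-\lambda)^{-j}\gamma(\lambda)$ from \eqref{jaha}, the identity $\frac{d^l}{d\lambda^l}\gamma(\overline\lambda)^*=l!\,\Gamma_1(A_0-\lambda)^{-l-1}$ (the display following \eqref{jaha}), and \eqref{spitzenklasse}, one obtains the operator identity
\[
  \bigl(\gamma(\overline\lambda)^*\bigr)^{(l)}\gamma^{(j)}(\lambda)=l!\,j!\,\Gamma_1(A_0-\lambda)^{-(l+j+1)}\gamma(\lambda)=\frac{l!\,j!}{(l+j+1)!}\,\frac{d^{\,l+j+1}}{d\lambda^{\,l+j+1}}M(\lambda),
\]
which for $l=j=0$ reduces to $\frac{d}{d\lambda}M(\lambda)$, in accordance with \eqref{bittesehr}. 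Substituting this and summing the multinomial weights — for fixed $k$ the $3-k$ pairs $(j,l)$ with $j+l=2-k$ each contribute $\frac{2!}{j!k!l!}\cdot\frac{l!j!}{(l+j+1)!}=\frac{2}{k!(3-k)!}$, and $(3-k)\cdot\frac{2}{k!(3-k)!}=\binom2k$ — one arrives at
\[
  \mathrm{tr}\Bigl[\frac{d^2}{d\lambda^2}\bigl(\gamma(\lambda)N(\lambda)\gamma(\overline\lambda)^*\bigr)\Bigr]=\sum_{k=0}^{2}\binom2k\,\mathrm{tr}\Bigl[N^{(k)}(\lambda)\,\frac{d^{\,3-k}}{d\lambda^{\,3-k}}M(\lambda)\Bigr]=\mathrm{tr}\Bigl[\frac{d^2}{d\lambda^2}\bigl(N(\lambda)\,\tfrac{d}{d\lambda}M(\lambda)\bigr)\Bigr],
\]
the last step being the Leibniz rule for $\frac{d^2}{d\lambda^2}(NM')$ (each term $N^{(k)}M^{(3-k)}$ is trace class by the same argument as above). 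Since $E(\lambda)=-\gamma(\lambda)N(\lambda)\gamma(\overline\lambda)^*$ and $N(\lambda)=(I_4+\eta M(\lambda))^{-1}\eta$, this is precisely \eqref{tftf}.

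For the scattering-theoretic conclusion I would fix a real point $\lambda_0\in\rho(A_0)\cap\rho(A_\eta)$, which exists because $(-mc^2,mc^2)$ meets $\sigma(A_\eta)$ in only finitely many points by Theorem~\ref{theorem_Krein}~(iii). The part already proven gives $(A_\eta-\lambda_0)^{-3}-(A_0-\lambda_0)^{-3}\in\mathfrak S_1$, and the scalar function $t\mapsto(t-\lambda_0)^{-3}$ is continuously differentiable and strictly monotone on each of the two half-lines forming $\sigma_{\mathrm{ess}}(A_0)=\sigma_{\mathrm{ess}}(A_\eta)$, with disjoint ranges on the two pieces. Hence, by the Birman--Kato theorem on trace class perturbations together with the invariance principle for wave operators, the wave operators $W_\pm(A_\eta,A_0)$ exist and are complete, and consequently the absolutely continuous parts of $A_\eta$ and $A_0$ are unitarily equivalent.

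The main obstacle is the second paragraph. One has to keep very careful track of the Schatten--von Neumann classes: the operator $\gamma(\lambda)N(\lambda)\gamma(\overline\lambda)^*$ is only in $\mathfrak S_{2,\infty}$ — not trace class — so neither it nor its first derivative may be traced, and it is only after two differentiations that each individual summand becomes trace class and amenable to the cyclic rotation used above; one then has to carry out, with correct bookkeeping, the combinatorial reduction converting the derivatives of the $\gamma$-field into derivatives of the Weyl function, which is where an arithmetic slip would go unnoticed.
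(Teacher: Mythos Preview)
Your proof is correct and follows essentially the same route as the paper: differentiate the Krein formula twice, use the Leibniz expansion together with the $\mathfrak S_{p,\infty}$ bounds from Lemma~\ref{lemma_compact_gamm_M} to land in $\mathfrak S_{2/3,\infty}\subset\mathfrak S_1$, then cycle the trace and collapse $(\gamma(\overline\lambda)^*)^{(l)}\gamma^{(j)}(\lambda)$ into derivatives of $M$. The paper outsources the last step to \cite[Theorem~3.7~(ii)]{BLL13_2}, whereas you carry out the multinomial bookkeeping explicitly (and correctly); your scattering argument via a real $\lambda_0\in(-mc^2,mc^2)$ and the invariance principle for the admissible function $t\mapsto(t-\lambda_0)^{-3}$ is likewise a spelled-out version of the references the paper invokes.
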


\begin{proof}
For
$\eta \in \mathbb{R} \setminus \{ \pm 2 c \}$ and $\lambda \in \rho(A_0)\cap\rho(A_\eta)$ 
it follows from Lemma~\ref{lemma_inv2} and Theorem~\ref{theorem_Krein} that
$(I_4 + \eta M(\lambda))^{-1}\eta$ is a bounded and everywhere defined operator. We shall use the symbol $\mathfrak B$ for the class of bounded and every defined operators
in the following.
  The resolvent formula from Theorem \ref{theorem_Krein} and
  \cite[equation~(2.7)]{BLL13_2} yield
  \begin{equation} \label{second_derivative}
    \begin{split}
      &(A_\eta - \lambda)^{-3} - (A_0 - \lambda)^{-3} \\
      &\quad = \frac{1}{2} \frac{\mathrm{d}^2 }{\mathrm{d} \lambda^2}
          \left[ (A_\eta - \lambda)^{-1} - (A_0 - \lambda)^{-1} \right] \\
      &\quad= -\frac{1}{2} \frac{\mathrm{d}^2 }{\mathrm{d} \lambda^2}
          \left[ \gamma(\lambda) (I_4 + \eta M(\lambda))^{-1} \eta \gamma\big(\overline{\lambda}\big)^* \right] \\
      &\quad= -\sum_{p + q + r = 2} \frac{1}{p! q! r!} \left(\frac{\mathrm{d}^p }{\mathrm{d} \lambda^p} \gamma(\lambda)\right)
          \left(\frac{\mathrm{d}^q}{\mathrm{d} \lambda^q} (I_4 + \eta M(\lambda))^{-1} \eta\right)
          \left(\frac{\mathrm{d}^r}{\mathrm{d} \lambda^r} \gamma\big(\overline{\lambda}\big)^*\right).
    \end{split}
  \end{equation}
Before we verify that each summand in the right-hand side in
\eqref{second_derivative} is a trace class operator we first mention that 
  \begin{equation*}
      \frac{\mathrm{d}}{\mathrm{d} \lambda} (I_4 + \eta M(\lambda))^{-1}\eta
          = - (I_4 + \eta M(\lambda))^{-1}\eta \left(\frac{\mathrm{d}}{\mathrm{d}\lambda}M(\lambda)\right) 
          (I_4 + \eta M(\lambda))^{-1}\eta \in \mathfrak{S}_{2, \infty}
  \end{equation*}
  and
  \begin{equation*}
   \begin{split}
    \frac{\mathrm{d}^2}{\mathrm{d} \lambda^2} (I_4 + \eta M(\lambda))^{-1}\eta
          &= 2(I_4 + \eta M(\lambda))^{-1}\eta \left(\left(\frac{\mathrm{d}}{\mathrm{d}\lambda} M(\lambda)\right) 
          (I_4 + \eta M(\lambda))^{-1}\eta\right)^2\\
          &~-(I_4 + \eta M(\lambda))^{-1}\eta \left(\frac{\mathrm{d}^2}{\mathrm{d}\lambda^2} M(\lambda)\right) 
          (I_4 + \eta M(\lambda))^{-1}\eta \in \mathfrak{S}_{1, \infty}
   \end{split}
  \end{equation*}
hold by Lemma~\ref{lemma_compact_gamm_M} and \eqref{hurra}.
It then follows from Lemma~\ref{lemma_compact_gamm_M} that
\begin{equation*}
 \begin{split}
  \left(\frac{\mathrm{d}^2 }{\mathrm{d} \lambda^2} \gamma(\lambda)\right)
      (I_4 + \eta M(\lambda))^{-1} \eta
      \gamma\big(\overline{\lambda}\big)^* &\in \mathfrak{S}_{4/5, \infty}\cdot\mathfrak B \cdot\mathfrak S_{4,\infty},\\
  \left(\frac{\mathrm{d} }{\mathrm{d} \lambda} \gamma(\lambda)\right)
      \left(\frac{\mathrm{d}}{\mathrm{d}\lambda }(I_4 + \eta M(\lambda))^{-1} \eta\right)
      \gamma\big(\overline{\lambda}\big)^* &\in \mathfrak{S}_{4/3, \infty}\cdot\mathfrak S_{2,\infty}\cdot\mathfrak S_{4,\infty},\\
  \left(\frac{\mathrm{d} }{\mathrm{d} \lambda} \gamma(\lambda)\right)
      (I_4 + \eta M(\lambda))^{-1} \eta
  \left(\frac{\mathrm{d}}{\mathrm{d}\lambda } \gamma\big(\overline{\lambda}\big)^*\right) &\in \mathfrak{S}_{4/3, \infty}\cdot\mathfrak B\cdot\mathfrak S_{4/3,\infty},\\
   \gamma(\lambda)
   \left(\frac{\mathrm{d} }{\mathrm{d} \lambda}   (I_4 + \eta M(\lambda))^{-1} \eta\right)
    \left(\frac{\mathrm{d}}{\mathrm{d}\lambda } \gamma\big(\overline{\lambda}\big)^*\right) &\in \mathfrak{S}_{4, \infty}\cdot\mathfrak S_{2,\infty}\cdot\mathfrak S_{4/3,\infty},\\
  \gamma(\lambda)
      (I_4 + \eta M(\lambda))^{-1} \eta \left(\frac{\mathrm{d}^2 }{\mathrm{d} \lambda^2}
      \gamma\big(\overline{\lambda}\big)^*\right) &\in \mathfrak{S}_{4, \infty}\cdot\mathfrak B\cdot\mathfrak S_{4/5,\infty},     \\
   \gamma(\lambda)
      \left(\frac{\mathrm{d}^2}{\mathrm{d}\lambda^2 }(I_4 + \eta M(\lambda))^{-1} \eta\right)
      \gamma\big(\overline{\lambda}\big)^* & \in \mathfrak{S}_{4, \infty}\cdot\mathfrak S_{1,\infty}\cdot\mathfrak S_{4,\infty},\\
 \end{split}
\end{equation*}
and using \eqref{hurra} we observe that each term is in the weak Schatten--von Neumann ideal $\mathfrak S_{2/3,\infty}$.
Since $\mathfrak S_{2/3,\infty}$ is contained in the trace class ideal 
we then conclude from \eqref{second_derivative} the first claim of this theorem.
Moreover, using the cyclicity of the trace 
it follows in the same way as in the proof of \cite[Theorem~3.7~(ii)]{BLL13_2} from \eqref{second_derivative} that
\begin{equation*}
 \begin{split}
  &\mathrm{tr}\bigl((A_\eta - \lambda)^{-3} - (A_0 - \lambda)^{-3}\bigr) \\
  &\qquad= -\sum_{p + q + r = 2} \frac{1}{p! q! r!} \mathrm{tr}\left[\left(\frac{\mathrm{d}^p }{\mathrm{d} \lambda^p} 
          \gamma(\lambda)\right)
          \left(\frac{\mathrm{d}^q}{\mathrm{d} \lambda^q} (I_4 + \eta M(\lambda))^{-1} \eta\right)
          \left(\frac{\mathrm{d}^r}{\mathrm{d} \lambda^r} \gamma\big(\overline{\lambda}\big)^*\right)\right] \\
      &\qquad = -\sum_{p + q + r = 2} \frac{1}{p! q! r!} \mathrm{tr}\left[
          \left(\frac{\mathrm{d}^q}{\mathrm{d} \lambda^q} (I_4 + \eta M(\lambda))^{-1} \eta\right)
          \left(\frac{\mathrm{d}^r}{\mathrm{d} \lambda^r} \gamma\big(\overline{\lambda}\big)^*\right)
          \left(\frac{\mathrm{d}^p }{\mathrm{d} \lambda^p}\gamma(\lambda)\right)\right]\\
      &\qquad= -\frac{1}{2} \mathrm{tr}\left[\frac{\mathrm{d}^2 }{\mathrm{d} \lambda^2}
          \left( (I_4 + \eta M(\lambda))^{-1} \eta \gamma\big(\overline{\lambda}\big)^*\gamma(\lambda)\right) \right] \\
          &\qquad= -\frac{1}{2} \mathrm{tr}\left[\frac{\mathrm{d}^2 }{\mathrm{d} \lambda^2}
          \left( (I_4 + \eta M(\lambda))^{-1} \eta \frac{\mathrm{d}}{\mathrm{d}\lambda} M(\lambda) \right)\right].
 \end{split}
\end{equation*}
This shows the trace formula in Theorem~\ref{theorem_resolvent_power_difference}. 
The assertion on the wave operators and the absolutely continuous spectrum
are well-known consequences of the trace class property, see, e.g~\cite[Chapter~0, Theorem~8.2]{Y10}, \cite[Problem~25]{RS79}, 
and the standard definition of existence and completeness of wave operators.
\end{proof}

\section{The nonrelativistic limit} \label{section_nonrelativistic_limit}

In this section we show that the Dirac operator $A_\eta$ with an electrostatic 
$\delta$-shell interaction of strength
$\eta \in \mathbb{R}$ converges in the nonrelativistic limit, i.e. when the energy of the rest mass $m c^2$ 
is subtracted from the total energy
and the speed of light $c$ tends to $\infty$,
to a Schr\"odinger operator with an electric $\delta$-potential of strength~$\eta$. 
This shows that $A_\eta$ is indeed the relativistic counterpart of the Schr\"odinger operator with a $\delta$-interaction.
Because of the convergence in the nonrelativistic limit one can also deduce spectral properties of $A_\eta$
for large $c$ from those of the Schr\"odinger operator with a $\delta$-interaction. 
As an illustration we show in Corollary~\ref{cor_large_eta} that for sufficiently large $-\eta>0$ 
the number of eigenvalues of $A_\eta$ in the gap $(- m c^2, m c^2)$ 
of $\sigma_{\text{ess}}(A_\eta)$ becomes large.

First we recall the definition of the Schr\"odinger operator with a $\delta$-potential supported on $\Sigma$ 
of strength $\eta\in\mathbb R$ and some of its properties. For this consider the sesquilinear form
\begin{equation} \label{def_delta_form}
  \mathfrak{b}_{\eta}[f, g] := \frac{1}{2 m} \big( \nabla f, \nabla g \big)_{L^2(\mathbb{R}^3; \mathbb{C}^3)} 
        + \eta (f|_\Sigma, g|_\Sigma)_{L^2(\Sigma; \mathbb{C})}, ~
        \dom \mathfrak{b}_\eta = H^1(\mathbb{R}^3; \mathbb{C}),
\end{equation}
which is symmetric, bounded from below and closed, see \cite[Section~4]{BEKS94} or \cite{BLL13}.
The corresponding self-adjoint operator $-\Delta_\eta$ is the Schr\"odinger operator with a $\delta$-potential supported on 
$\Sigma$ of strength $\eta$.
In what follows, we want to state a suitable resolvent formula for $-\Delta_\eta$. 
For this purpose, we introduce for $\lambda \in \mathbb{C} \setminus \mathbb{R}$ the function 
\begin{equation} \label{def_K_lambda}
  K_\lambda(x) := 2 m \frac{e^{i\sqrt{2 m \lambda} |x|}}{4 \pi |x|}, \quad x \in \mathbb{R}^3 \setminus \{ 0 \}.
\end{equation}
Then, $K_\lambda$ is the integral kernel of the resolvent of $-\frac{1}{2 m} \Delta$, i.e. 
\begin{equation} \label{resolvent_Laplace}
  \left(-\frac{1}{2 m} \Delta - \lambda\right)^{-1} f(x) = \int_{\mathbb{R}^3} K_\lambda(x - y) f(y) \mathrm{d} y, \quad
      x \in \mathbb{R}^3, ~f \in L^2(\mathbb{R}^3; \mathbb{C}).
\end{equation}
Furthermore, we define the operators 
$\widetilde{\gamma}(\lambda): L^2(\Sigma; \mathbb{C}) \rightarrow L^2(\mathbb{R}^3; \mathbb{C})$,
\begin{equation} \label{def_gamma_Schroedinger}
  \widetilde{\gamma}(\lambda) \varphi(x) := \int_\Sigma K_\lambda(x - y) \varphi(y) \mathrm{d} \sigma(y),
  \quad x \in \mathbb{R}^3, ~\varphi \in L^2(\Sigma; \mathbb{C}),
\end{equation}
and $\widetilde{M}(\lambda): L^2(\Sigma; \mathbb{C}) \rightarrow L^2(\Sigma; \mathbb{C})$,
\begin{equation} \label{def_M_Schroedinger}
  \widetilde{M}(\lambda) \varphi(x) := \int_\Sigma K_\lambda(x - y) \varphi(y) \mathrm{d} \sigma(y),
  \quad x \in \Sigma, ~\varphi \in L^2(\Sigma; \mathbb{C}).
\end{equation}
According to \cite[Proposition 3.2 and Remark 3.3]{BLL13} the operators $\widetilde{\gamma}(\lambda)$
and $\widetilde{M}(\lambda)$ are bounded and everywhere defined. It is not difficult to check that the 
adjoint of $\widetilde{\gamma}(\lambda)$ is given by 
$\widetilde{\gamma}(\lambda)^*: L^2(\mathbb{R}^3; \mathbb{C}) \rightarrow L^2(\Sigma; \mathbb{C})$,
\begin{equation} \label{def_gamma_star_Schroedinger}
  \widetilde{\gamma}(\lambda)^* f(x) := \int_{\mathbb{R}^3} K_{\overline{\lambda}}(x - y) f(y) \mathrm{d} y,
  \quad x \in \Sigma, ~f \in L^2(\mathbb{R}^3; \mathbb{C}).
\end{equation}
With these notations we recall a resolvent formula for $-\Delta_\eta$;
cf. \cite[Theorem~3.5]{BLL13} or \cite[Lemma~2.3]{BEKS94}.

\begin{thm} \label{theorem_Schroedinger}
  Let $\eta \in \mathbb{R}$
  and let $\lambda \in \mathbb{C} \setminus \mathbb{R}$.
  Then the operator $I + \eta \widetilde{M}(\lambda)$ has a bounded and everywhere defined inverse
  and 
  \begin{equation*}
    (-\Delta_\eta - \lambda)^{-1} = \left( -\frac{1}{2 m} \Delta - \lambda \right)^{-1}
        - \widetilde{\gamma}(\lambda) \big( I + \eta \widetilde{M}(\lambda) \big)^{-1} 
        \eta \widetilde{\gamma}\big(\overline{\lambda}\big)^*.
  \end{equation*}
\end{thm}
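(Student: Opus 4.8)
The plan is to obtain Theorem~\ref{theorem_Schroedinger} from the general theory of quasi boundary triples in Section~\ref{section_quasi_boundary_triples}, by constructing the exact analogue of the triple in Theorem~\ref{theorem_triple} for the free Schr\"odinger operator $-\frac{1}{2m}\Delta$ in place of $A_0$. Fix a base point $\lambda_0\in(-\infty,0)\subset\rho(-\frac{1}{2m}\Delta)$; then $K_{\lambda_0}$ in \eqref{def_K_lambda} is exponentially decaying and is a fundamental solution of $-\frac{1}{2m}\Delta-\lambda_0$, so the single layer potential $\widetilde\gamma(\lambda_0)\varphi$ is annihilated by $-\frac{1}{2m}\Delta-\lambda_0$ off $\Sigma$. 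Following \eqref{def_T}--\eqref{def_triple} verbatim I would introduce
\[
  \widetilde T\bigl(f+\widetilde\gamma(\lambda_0)\varphi\bigr):=-\tfrac{1}{2m}\Delta f+\lambda_0\,\widetilde\gamma(\lambda_0)\varphi,\qquad \dom\widetilde T:=\bigl\{f+\widetilde\gamma(\lambda_0)\varphi:f\in H^2(\mathbb R^3;\mathbb C),\ \varphi\in L^2(\Sigma;\mathbb C)\bigr\},
\]
together with $\widetilde\Gamma_0(f+\widetilde\gamma(\lambda_0)\varphi):=\varphi$ and $\widetilde\Gamma_1(f+\widetilde\gamma(\lambda_0)\varphi):=f|_\Sigma+\widetilde M(\lambda_0)\varphi$. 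Using the mapping and jump properties of $\widetilde\gamma(\lambda_0)$ and $\widetilde M(\lambda_0)$ from \cite{BEKS94,BLL13}, the analogue of Lemma~\ref{lemma_decomposition} shows this decomposition is unique, so $\widetilde T,\widetilde\Gamma_0,\widetilde\Gamma_1$ are well defined; and exactly as in the proof of Theorem~\ref{theorem_triple} one checks the abstract Green identity, density of $\ker\widetilde\Gamma$ in $L^2(\mathbb R^3;\mathbb C)$ and of $\ran\widetilde\Gamma$ (via surjectivity of the trace and of $\widetilde\Gamma_0$), and $\widetilde T\upharpoonright\ker\widetilde\Gamma_0=-\frac{1}{2m}\Delta$. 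Theorem~\ref{theorem_guess} then yields that $\{L^2(\Sigma;\mathbb C),\widetilde\Gamma_0,\widetilde\Gamma_1\}$ is a quasi (indeed generalized, since $\ran\widetilde\Gamma_0=L^2(\Sigma;\mathbb C)$) boundary triple for $\overline{\widetilde T}=\widetilde S^*$, and computing as in Proposition~\ref{proposition_gamma_Weyl} identifies its $\gamma$-field and Weyl function with $\widetilde\gamma(\cdot)$ and $\widetilde M(\cdot)$.

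Next I would prove the bounded invertibility of $I+\eta\widetilde M(\lambda)$ for $\lambda\in\mathbb C\setminus\mathbb R$. Since $\ran\widetilde M(\lambda)\subset\ran\widetilde\Gamma_1$ is contained in a Sobolev space $H^s(\Sigma;\mathbb C)$ with $s>0$ -- equivalently, $K_\lambda$ has only a weak $|x-y|^{-1}$ singularity on the surface -- the operator $\widetilde M(\lambda)$ is compact, so $I+\eta\widetilde M(\lambda)$ is a compact perturbation of the identity. On the other hand, for $\eta\in\mathbb R$ the canonical extension $A_{[\eta]}:=\widetilde T\upharpoonright\ker(\widetilde\Gamma_0+\eta\widetilde\Gamma_1)$ is symmetric by the abstract Green identity (cf.\ \eqref{abab}), hence has no eigenvalue in $\mathbb C\setminus\mathbb R$; by the first part of Theorem~\ref{theorem_Krein_abstract} this forces $-1\notin\sigma_{\mathrm{p}}(\eta\widetilde M(\lambda))$, i.e.\ $I+\eta\widetilde M(\lambda)$ is injective. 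The Fredholm alternative now gives that $I+\eta\widetilde M(\lambda)$ has a bounded everywhere defined inverse. Applying the last part of Theorem~\ref{theorem_Krein_abstract} with the bounded self-adjoint parameter $B=\eta I$ and $\lambda_\pm=\pm i$, we conclude that $A_{[\eta]}$ is self-adjoint in $L^2(\mathbb R^3;\mathbb C)$ and that
\[
  (A_{[\eta]}-\lambda)^{-1}=\Bigl(-\tfrac{1}{2m}\Delta-\lambda\Bigr)^{-1}-\widetilde\gamma(\lambda)\bigl(I+\eta\widetilde M(\lambda)\bigr)^{-1}\eta\,\widetilde\gamma\bigl(\overline{\lambda}\bigr)^{*}
\]
for all $\lambda\in\rho(-\frac{1}{2m}\Delta)\cap\rho(A_{[\eta]})$, and in particular for all $\lambda\in\mathbb C\setminus\mathbb R$.

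It remains to identify $A_{[\eta]}$ with the form operator $-\Delta_\eta$, which I expect to be the main obstacle. Both are self-adjoint extensions of $\widetilde S$, so it suffices to compare domains: for $h=f+\widetilde\gamma(\lambda_0)\varphi\in\dom A_{[\eta]}$ the classical jump relations for the (Yukawa) single layer potential $\widetilde\gamma(\lambda_0)$ show that $h$ is continuous across $\Sigma$ with trace $\widetilde\Gamma_1 h=h|_\Sigma$, while the jump of its normal derivative equals $-2m\varphi$ up to the normalisation of $K_{\lambda_0}$; substituting the boundary condition $\varphi=-\eta\,\widetilde\Gamma_1 h=-\eta\,h|_\Sigma$ recovers precisely the jump condition (relating the normal-derivative jump to $\eta$ times the trace) that is encoded in $\dom(-\Delta_\eta)$ through the form $\mathfrak{b}_\eta$, and $A_{[\eta]}$ acts as $-\frac{1}{2m}\Delta$ off $\Sigma$. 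Hence $A_{[\eta]}=-\Delta_\eta$ and the resolvent formula above is the asserted one. Alternatively one may bypass the boundary triple and verify the identity directly: with $\psi:=(I+\eta\widetilde M(\lambda))^{-1}\eta\,\widetilde\gamma(\overline\lambda)^*f$ one checks that $u:=(-\frac{1}{2m}\Delta-\lambda)^{-1}f-\widetilde\gamma(\lambda)\psi$ lies in $H^1(\mathbb R^3;\mathbb C)$ and, using the same jump relations together with $\psi=-\eta\,u|_\Sigma$, that $\mathfrak{b}_\eta[u,v]-\lambda(u,v)_{L^2}=(f,v)_{L^2}$ for all $v\in H^1(\mathbb R^3;\mathbb C)$, i.e.\ $u=(-\Delta_\eta-\lambda)^{-1}f$. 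Either way the crux is the precise jump behaviour of single layer potentials on the $C^\infty$ surface $\Sigma$, which is worked out in \cite{BEKS94,BLL13}.
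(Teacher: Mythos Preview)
The paper does not actually prove Theorem~\ref{theorem_Schroedinger}; it is stated as a recalled result with the sentence ``With these notations we recall a resolvent formula for $-\Delta_\eta$; cf.\ \cite[Theorem~3.5]{BLL13} or \cite[Lemma~2.3]{BEKS94}'' and no further argument. So there is no in-paper proof to compare against.

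Your proposal is correct and is, in fact, a faithful sketch of what the cited reference \cite{BLL13} carries out: construct a (generalized) boundary triple for the free Schr\"odinger operator, identify $\widetilde\gamma(\cdot)$ and $\widetilde M(\cdot)$ as its $\gamma$-field and Weyl function, deduce invertibility of $I+\eta\widetilde M(\lambda)$ by compactness of $\widetilde M(\lambda)$ together with the absence of nonreal eigenvalues for the symmetric extension, and then invoke the abstract Krein formula. The identification step $A_{[\eta]}=-\Delta_\eta$ via the normal-derivative jump of the single layer potential is exactly the point handled in \cite[Theorem~3.5]{BLL13}. Your alternative route through the sesquilinear form $\mathfrak b_\eta$ is closer to the original approach in \cite{BEKS94}. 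Either way, what you have written is a legitimate proof outline that goes well beyond what the present paper supplies; for the purposes of this paper a bare citation suffices.
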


It will be shown that the resolvents of the Dirac operators $A_\eta$ 
with $\eta\in\mathbb R$ fixed converge in the nonrelativistic limit to the resolvent of the Schr\"odinger
operator with a $\delta$-potential times a projection to the upper components of the Dirac wave function, i.e. that
for any $\lambda \in \mathbb{C} \setminus \mathbb{R}$
\begin{equation*}
  \lim_{c \rightarrow \infty} \big( A_\eta - (\lambda + m c^2) \big)^{-1}
      = \big( -\Delta_{\eta} - \lambda \big)^{-1}  P_+,
\end{equation*}
where
\begin{equation*}
  P_+ := \begin{pmatrix} I_2 & 0 \\ 0 & 0 \end{pmatrix}.
\end{equation*}
Note that the Dirac operator $A_\eta$ is self-adjoint for all sufficiently large $c$ by Theorem~\ref{theorem_Krein}.
The resolvent formula in Theorem~\ref{theorem_Krein} indicates that it is sufficient to compute the limits of the operators 
$( A_0 - (\lambda + m c^2))^{-1}$, $\gamma(\lambda + m c^2), M(\lambda + m c^2)$ and
$\gamma\big( \overline{\lambda} + m c^2 \big)^*$.
This is done next in a preparatory proposition. The nonrelativistic limit of the free Dirac operator in \eqref{conv_res_free} is known from 
\cite[Theorem~6.1]{T92}.

\begin{prop} \label{proposition_limit_gamma_M}
  Let $\lambda \in \mathbb{C} \setminus \mathbb{R}$ and let $\gamma(\lambda + m c^2), M(\lambda + m c^2)$ and 
  $\gamma(\overline{\lambda} + m c^2)^*$
  be as in Proposition \ref{proposition_gamma_Weyl}. Moreover, let $\widetilde{\gamma}(\lambda), \widetilde{M}(\lambda)$
  and $\widetilde{\gamma}(\overline{\lambda})^*$ be as in 
  \eqref{def_gamma_Schroedinger}--\eqref{def_gamma_star_Schroedinger}.
  Then there exists a constant $\kappa = \kappa(m, \lambda)$ such that the following statements are true.
  \begin{subequations} \label{nonrelativistic_convergence}
    \begin{gather}
      \label{conv_res_free}
      \left\|\big( A_0 - (\lambda + m c^2) \big)^{-1} -
      \left( -\frac{1}{2 m} \Delta - \lambda \right)^{-1}  P_+ \right\| \leq \frac{\kappa}{c}; \\
      \label{conv_gamma}
      \big\| \gamma(\lambda + m c^2) - \widetilde{\gamma}(\lambda)  P_+ \big\| \leq \frac{\kappa}{c};\\
      \label{conv_gamma_star}
      \big\| \gamma\big(\overline{\lambda} + m c^2\big)^* 
          - \widetilde{\gamma}\big(\overline{\lambda}\big)^*  P_+ \big\| \leq \frac{\kappa}{c}; \\
      \label{conv_M}
      \big\| M(\lambda + m c^2) - \widetilde{M}(\lambda)  P_+ \big\| \leq \frac{\kappa}{c}. 
    \end{gather}
  \end{subequations}
\end{prop}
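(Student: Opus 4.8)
The plan is to reduce all four estimates to a single pointwise bound on a difference of integral kernels. Fix $\lambda\in\mathbb C\setminus\mathbb R$; then $\lambda+mc^2\in\rho(A_0)$ for every $c>0$, and from \eqref{def_G_lambda} one computes
\[
  \frac{(\lambda+mc^2)^2}{c^2}-(mc)^2=2m\lambda+\frac{\lambda^2}{c^2}=:z_c ,
\]
where $z_c\to 2m\lambda\in\mathbb C\setminus[0,\infty)$ and, crucially, $z_c$ stays in a fixed compact subset of $\mathbb C\setminus[0,\infty)$ for all $c\ge c_0$, so that $\operatorname{Im}\sqrt{z_c}\ge\delta>0$ and $|\sqrt{z_c}-\sqrt{2m\lambda}|\le\kappa/c^2$ uniformly in $c\ge c_0$. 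Substituting $\lambda+mc^2$ into \eqref{def_G_lambda} and using the elementary identity $m(I_4+\beta)=2mP_+$ (a consequence of the block structure of $\beta$, cf.\ \eqref{eq:commutation}), I would rewrite the Dirac kernel as
\[
  G_{\lambda+mc^2}(x)=\Bigl(2mP_+ \;+\;\tfrac{\lambda}{c^2}I_4\;+\;\bigl(1-i\sqrt{z_c}\,|x|\bigr)\tfrac{i(\alpha\cdot x)}{c|x|^2}\Bigr)\,\frac{e^{i\sqrt{z_c}\,|x|}}{4\pi|x|},
\]
whereas $K_\lambda(x)P_+=2mP_+\,\dfrac{e^{i\sqrt{2m\lambda}\,|x|}}{4\pi|x|}$ by \eqref{def_K_lambda}. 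Subtracting and estimating the three error contributions — the one created by $e^{i\sqrt{z_c}|x|}-e^{i\sqrt{2m\lambda}|x|}$, which is $O(|x|/c^2)$; the term $\tfrac{\lambda}{c^2}I_4$; and the term carrying the explicit factor $1/c$, where one uses $(\alpha\cdot x)^2=|x|^2I_4$, hence $|i(\alpha\cdot x)/|x|^2|=|x|^{-1}$ — together with $\operatorname{Im}\sqrt{z_c}\ge\delta$, yields
\[
  \bigl|G_{\lambda+mc^2}(x)-K_\lambda(x)P_+\bigr|\le\frac{\kappa}{c}
  \begin{cases} |x|^{-2}, & |x|<R,\\ e^{-\delta|x|}, & |x|\ge R,\end{cases}
\]
uniformly for $c\ge c_0$, with $R,\delta,\kappa$ independent of $c$; the right-hand side is, up to the factor $\kappa/c$, exactly of the form \eqref{asymptotics_G_0}.

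Granting this estimate, all four bounds follow by identifying the relevant operators as integral operators with kernel a restriction of $E_c:=G_{\lambda+mc^2}-K_\lambda P_+$. For \eqref{conv_res_free}: by \eqref{resolvent_A_0} and \eqref{resolvent_Laplace} the difference $(A_0-(\lambda+mc^2))^{-1}-(-\tfrac1{2m}\Delta-\lambda)^{-1}P_+$ is convolution on $\mathbb R^3$ with $E_c$, and the bounding kernel above lies in $L^1(\mathbb R^3)$ with norm $\le\kappa/c$ (as $|x|^{-2}$ is locally integrable in three dimensions and the tail decays exponentially), so Young's inequality gives \eqref{conv_res_free}. For \eqref{conv_gamma} and \eqref{conv_gamma_star}: by Proposition~\ref{proposition_gamma_Weyl} the operators $\gamma(\lambda+mc^2)$ and $\gamma(\overline\lambda+mc^2)^*$ act as integration against $G_{\lambda+mc^2}(x-y)$ (note $\overline{\overline\lambda+mc^2}=\lambda+mc^2$), while $\widetilde\gamma(\lambda)P_+$ and $\widetilde\gamma(\overline\lambda)^*P_+$ act as integration against $K_\lambda(x-y)P_+$ because $K_\lambda$ is scalar; hence both differences are integral operators between $L^2(\Sigma;\mathbb C^4)$ and $L^2(\mathbb R^3;\mathbb C^4)$ with kernel the restriction of $E_c$, and since $E_c$ is $\kappa/c$ times a kernel of the form \eqref{asymptotics_G_0}, Proposition~\ref{proposition_integral_operators2} bounds their norms by $\kappa/c$. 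For \eqref{conv_M}: $M(\lambda+mc^2)-\widetilde M(\lambda)P_+$ is the strongly singular integral operator on $\Sigma$ with kernel $E_c$ — the principal value is needed only for the $|x-y|^{-2}$ part, whose odd homogeneous structure $i(\alpha\cdot x)/|x|^3$ is exactly that of the strongly singular part of $G_0$ treated earlier — so Proposition~\ref{proposition_integral_operators3}, applied with the $\kappa/c$ bound, gives \eqref{conv_M}.

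The calculations are otherwise routine bookkeeping; the step requiring genuine care is the uniformity in $c$. One must verify that $z_c=2m\lambda+\lambda^2/c^2$ remains in a fixed compact subset of $\mathbb C\setminus[0,\infty)$ for all large $c$, so that $\sqrt{z_c}$, the lower bound $\operatorname{Im}\sqrt{z_c}\ge\delta$, and the Lipschitz estimate $|\sqrt{z_c}-\sqrt{2m\lambda}|\le\kappa/c^2$ hold with constants independent of $c$; only then is the bounding kernel in the displayed estimate a fixed ($c$-independent) function, which is what keeps the constants in Young's inequality and in Propositions~\ref{proposition_integral_operators2}--\ref{proposition_integral_operators3} from deteriorating as $c\to\infty$. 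The remaining ingredient is purely algebraic, namely $m(I_4+\beta)=2mP_+$ and $(\alpha\cdot x)^2=|x|^2 I_4$, which is precisely what makes the limiting kernel collapse onto $K_\lambda(x)$ times the projection $P_+$ onto the upper components of the Dirac spinor.
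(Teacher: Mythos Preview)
Your approach is essentially that of the paper for \eqref{conv_res_free}--\eqref{conv_gamma_star}: derive the pointwise bound on $E_c=G_{\lambda+mc^2}-K_\lambda P_+$ and feed it into the Schur-type estimates of the Appendix (you use Young's inequality for \eqref{conv_res_free} where the paper uses Proposition~\ref{proposition_integral_operators1}, which is harmless). The algebraic reductions and the uniformity-in-$c$ discussion are correct.

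There is, however, a genuine gap in your treatment of \eqref{conv_M}. Your pointwise bound gives $|E_c(x)|\le(\kappa/c)|x|^{-2}$ near $x=0$, and you then invoke Proposition~\ref{proposition_integral_operators3}. But that proposition requires $|t(x)|\le\kappa(1+|x|^{-1})$; an $|x|^{-2}$ singularity on a two-dimensional surface is not locally integrable, so the Schur test behind Proposition~\ref{proposition_integral_operators3} simply does not apply. You notice that the $|x|^{-2}$ contribution comes from the odd homogeneous term $\tfrac{i(\alpha\cdot x)}{c|x|^3}$ and that this ``is exactly the strongly singular part of $G_0$ treated earlier'', but you then conclude with ``so Proposition~\ref{proposition_integral_operators3} \dots\ gives \eqref{conv_M}'', which is a non sequitur: the earlier treatment of that singular part is Calder\'on--Zygmund theory (via \cite[Lemma~3.3]{AMV14}), not Proposition~\ref{proposition_integral_operators3}.

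What is actually needed --- and what the paper does --- is to split $E_c$ on $\Sigma\times\Sigma$ into the genuinely singular piece $u_4(x)=\dfrac{i(\alpha\cdot x)}{4\pi c\,|x|^3}$ and a remainder whose singularity is only $|x|^{-1}$. The remainder is then controlled by Proposition~\ref{proposition_integral_operators3} with a $\kappa/c$ constant, while the operator $U_4$ with kernel $u_4(x-y)$ is $\tfrac1c$ times a \emph{fixed} ($c$-independent) bounded singular integral operator on $L^2(\Sigma;\mathbb C^4)$, so $\|U_4\|\le\kappa/c$ directly. Once you articulate this split, your argument for \eqref{conv_M} goes through; as written, the appeal to Proposition~\ref{proposition_integral_operators3} for the full $E_c$ is incorrect.
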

\begin{proof}
  Since all differences that shall be estimated in the operator norm are integral operators 
  with the integral kernel $G_{\lambda + m c^2} - K_\lambda P_+$ we consider first this function.
  Let $K_\lambda$ be as in \eqref{def_K_lambda} and note that
  \begin{equation*}
      G_{\lambda + m c^2}(x) \!= \left(\frac{\lambda}{c^2} I_4 \!+ 2 m P_+ 
          + \left( 1 - i \sqrt{\frac{\lambda^2}{c^2} + 2 m \lambda} |x| \right) \frac{i(\alpha \cdot x )}{c |x|^2} \right)
     \frac{e^{i\sqrt{\lambda^2/c^2 + 2 m \lambda} |x|}}{4 \pi |x|}.
  \end{equation*}
  We use the decomposition
  \begin{equation} \label{T_1_T_2}
    G_{\lambda + m c^2}(x) - K_\lambda(x)  P_+ = t_1(x) + t_2(x),
  \end{equation}
  where the functions $t_1$ and $t_2$ are defined by  
  \begin{equation} \label{def_t_1_t_2}
    \begin{split}
      t_1(x) &= \frac{e^{i\sqrt{\lambda^2/c^2 + 2 m \lambda} |x|}}{4 \pi |x|} \left( \frac{\lambda}{c^2} I_4 + 
          \left( 1 - i \sqrt{\frac{\lambda^2}{c^2} + 2 m \lambda} |x| \right) \frac{i(\alpha\cdot x)}{c |x|^2} \right);\\
      t_2(x) &= \left( e^{i\sqrt{\lambda^2/c^2 + 2 m \lambda} |x|}
          - e^{i\sqrt{2 m \lambda} |x|} \right) \frac{2 m}{4 \pi |x|} P_+. \\
    \end{split}
  \end{equation}
  It is easy to see that there exist positive constants $\kappa_1(m, \lambda)$ and $\kappa_2(m, \lambda)$ independent of $c$ 
  and an $R > 0$ such that 
  \begin{equation} \label{estimate_T_1}
      |t_1(x)| \leq \frac{\kappa_1(m, \lambda)}{c} \begin{cases} |x|^{-2}, &~ |x| < R, \\
                   e^{-\kappa_2(m, \lambda) |x|}, &~|x| \geq R. \end{cases}
  \end{equation}
  In order to estimate $t_2$ note  that
  \begin{equation} \label{estimate_main_theorem} 
    \begin{split}
      \left|e^{i\sqrt{\lambda^2/c^2 + 2 m \lambda} |x|} - e^{i\sqrt{2 m \lambda} |x|} \right| &= 
          \left| \int_0^1 \frac{\mathrm{d}}{\mathrm{d} t} e^{i\sqrt{t \lambda^2/c^2 + 2 m \lambda} |x|} \mathrm{d} t \right| \\
      &\leq  \frac{|x|}{c} \int_0^1 \biggl| e^{i\sqrt{t \lambda^2/c^2 + 2 m \lambda} |x|} 
          \frac{i \lambda^2 }{2 c \sqrt{t \lambda^2/c^2 + 2 m \lambda}} \biggr| \mathrm{d} t.
    \end{split}
  \end{equation}
  Since $\lambda \in \mathbb{C} \setminus \mathbb{R}$ there exist constants 
  $\kappa_3(m, \lambda), \kappa_4(m, \lambda) > 0$ such that for all sufficiently large $c$
  \begin{equation*}
    \left| \frac{i \lambda^2}{2 c \sqrt{t \lambda^2/c^2 + 2 m \lambda}} \right| \leq \kappa_3(m, \lambda)
    \quad \text{and} \quad 
    \mathrm{Re} \left( i\sqrt{t \lambda^2/c^2 + 2 m \lambda} \right) \leq -\kappa_4(m, \lambda)
  \end{equation*}
  hold for all $t \in [0, 1]$. This implies
  \begin{equation} \label{estimate_T_2}
    \begin{split}
      |t_2(x)| &= \bigg|\frac{2 m}{4 \pi |x|} \left( e^{i\sqrt{\lambda^2/c^2 + 2 m \lambda} |x|}
          - e^{i\sqrt{2 m \lambda} |x|} \right) P_+ \bigg| \\
      &\leq \kappa_3(m, \lambda) \frac{2 m}{4 \pi c} e^{-\kappa_4(m, \lambda) |x|}.
    \end{split}
  \end{equation}
  Eventually, because of the estimates \eqref{T_1_T_2}, \eqref{estimate_T_1} and \eqref{estimate_T_2} 
  there exist constants $\kappa_5(m, \lambda), \kappa_6(m, \lambda) > 0$ such that
  \begin{equation} \label{estimate_T_1_T_2}
    \begin{split}
      |G_{\lambda + m c^2}(x) - K_\lambda(x)  P_+| &\leq |t_1(x)| + |t_2(x)| \\
      &\leq \frac{\kappa_5(m, \lambda)}{c} \begin{cases} |x|^{-2}, &~ |x| < R, \\
                   e^{-\kappa_6(m, \lambda) |x|}, &~|x| \geq R. \end{cases} \\
    \end{split}
  \end{equation}
  Now, we are prepared to prove \eqref{conv_res_free}--\eqref{conv_gamma_star}.
  By~\eqref{resolvent_A_0} and
   \eqref{resolvent_Laplace} we have
  \begin{equation*}
    \begin{split}
      \bigg(\big( A_0 - (\lambda + m c^2) \big)^{-1} &- 
          \left( -\frac{1}{2 m} \Delta - \lambda \right)^{-1}  P_+ \bigg) f(x) \\
      &= \int_{\mathbb{R}^3} \big( G_{\lambda + m c^2}(x - y) - K_\lambda(x - y)  P_+ \big) f(y) \mathrm{d} y
    \end{split}
  \end{equation*}
  for $x\in\mathbb R^3$ and $f \in L^2(\mathbb{R}^3; \mathbb{C}^4)$.
  Employing \eqref{estimate_T_1_T_2} and Proposition~\ref{proposition_integral_operators1} we find that
  \begin{equation*}
    \left\| \big( A_0 - (\lambda + m c^2) \big)^{-1} - 
          \left( -\frac{1}{2 m} \Delta - \lambda \right)^{-1}  P_+ \right\| 
          \leq \frac{\kappa_7(m, \lambda)}{c} 
  \end{equation*}
  for some constant $\kappa_7(m, \lambda)$ and hence \eqref{conv_res_free} holds.
  In order to prove \eqref{conv_gamma} recall from Proposition~\ref{proposition_gamma_Weyl}~(i) and \eqref{def_gamma_Schroedinger} that
  \begin{equation*}
    \big(\gamma(\lambda + m c^2) - \widetilde{\gamma}(\lambda)  P_+\big) \varphi(x)
        = \int_\Sigma \big( G_{\lambda + m c^2}(x-y) - K_\lambda(x-y) P_+ \big) \varphi(y) \mathrm{d} \sigma(y)
  \end{equation*}
  for $x \in \mathbb{R}^3$ and $\varphi \in L^2(\Sigma; \mathbb{C}^4)$.
  Here, the asymptotics in
  \eqref{estimate_T_1_T_2} and Proposition~\ref{proposition_integral_operators2} yield
  \begin{equation*}
    \begin{split}
      \big\| \gamma(\lambda + m c^2) 
          &- \widetilde{\gamma}(\lambda)  P_+ \big\| 
      \leq \frac{\kappa_8(m, \lambda, \Sigma)}{c},
    \end{split}
  \end{equation*}
  which is already the claimed estimate.
  Moreover, the relation~\eqref{conv_gamma_star} follows by taking adjoints.
  Finally, we prove $M(\lambda + m c^2) \rightarrow \widetilde{M}(\lambda)  P_+$.
  For that purpose, we use the decomposition
  \begin{equation*}
    \begin{split} 
      \big(M(\lambda + m c^2) &- \widetilde{M}(\lambda)  P_+ \big) \varphi(x) \\
        &= \lim_{\varepsilon \searrow 0} \int_{|x-y|>\varepsilon}
        \big( G_{\lambda + m c^2}(x-y) - K_\lambda(x-y) P_+ \big) \varphi(y) \mathrm{d} \sigma(y) \\
        &= (U_1 + U_2 + U_3 + U_4) \varphi(x),
        \quad x \in \Sigma, ~\varphi \in L^2(\Sigma; \mathbb{C}^4),
    \end{split}
  \end{equation*}
  where for $j\in \{ 1, 2, 3, 4 \}$ the operators
  $U_j: L^2(\Sigma; \mathbb{C}^4) \rightarrow L^2(\Sigma; \mathbb{C}^4)$ are integral operators of the form
  \begin{equation*}
    U_j \varphi(x) := \lim_{\varepsilon \searrow 0} \int_{|x-y|>\varepsilon}
    u_j(x-y)\varphi(y) \mathrm{d} \sigma(y),
    \quad x \in \Sigma,~\varphi \in L^2(\Sigma; \mathbb{C}^4),
  \end{equation*}
  and the functions $u_j$ are given by
  \begin{align*} 
      u_1(x) &:= \frac{e^{i\sqrt{\lambda^2/c^2 + 2 m \lambda} |x|}}{4 \pi |x|} \left( \frac{\lambda}{c^2} I_4 
          + \frac{\alpha \cdot x}{c |x|} \sqrt{\frac{\lambda^2}{c^2} + 2 m \lambda} \right),
      &u_2(x) &:= t_2(x), \\
      u_3(x) &:= \frac{i(\alpha \cdot x)}{4 c \pi |x|^3} \Big(e^{i\sqrt{\lambda^2/c^2 + 2 m \lambda} |x|} - 1\Big) , 
      &u_4(x) &:= \frac{i(\alpha \cdot x)}{4 c \pi |x|^3} ,
  \end{align*}
  with $t_2$ as in \eqref{def_t_1_t_2}. Note that $u_1 + u_3 + u_4 = t_1$
  with $t_1$ given by \eqref{def_t_1_t_2}.
  It is easy to see that $|u_1(x)| \leq \frac{\kappa_9(m, \lambda)}{c |x|}$ for some constant $\kappa_{9}(m, \lambda)$ and all $x \in \mathbb{R}^3\setminus \{0\}$, and
  $|u_2(x)| \leq \kappa_3(m, \lambda) \frac{2 m}{4 \pi c}$ for all $x \in \mathbb{R}^3$ by \eqref{estimate_T_2}. Next, we observe that
  \begin{equation*}
    \begin{split}
      \Big|e^{i\sqrt{\lambda^2/c^2 + 2 m \lambda} |x|} - 1\Big| &= 
        \left| \int_0^1 \frac{\mathrm{d}}{\mathrm{d} t} e^{i t\sqrt{\lambda^2/c^2 + 2 m \lambda} |x|} \mathrm{d} t \right| \\
      &\leq \vert x\vert \int_0^1 \bigg| e^{i t \sqrt{\lambda^2/c^2 + 2 m \lambda} |x|} 
          \cdot i\sqrt{\frac{\lambda^2}{c^2} + 2 m \lambda} \bigg| \mathrm{d} t,
    \end{split}
  \end{equation*}
  and hence there exists $\kappa_{10}(m, \lambda)$ such that $|u_3(x)| \leq \frac{\kappa_{10}(m, \lambda)}{c\vert x\vert}$ 
  for all $x \in \mathbb{R}^3\setminus \{0\}$.
  Therefore, we can apply Proposition~\ref{proposition_integral_operators3} and obtain
  \begin{equation*}
    \| U_j \| \leq \frac{\kappa_{11}(m, \lambda)}{c}, \quad j \in \{ 1, 2, 3 \},
  \end{equation*}
  for some constant $\kappa_{11}(m, \lambda)$.
  Eventually, we note that $U_4 = \frac{1}{c} C$, where $C$ is the integral operator with integral kernel
  $c u_4(x-y)=\frac{i(\alpha \cdot (x-y))}{4 \pi |x-y|^3} $; this operator is independent of $c$, everywhere defined and bounded,
  see the proof of \cite[Lemma~3.3]{AMV14}. Therefore, $\| U_4 \| \leq \frac{\kappa_{12}}{c}$.
  This yields finally that
  \begin{equation*}
    \left\| M(\lambda + m c^2) - \widetilde{M}(\lambda)  P_+ \right\| 
        \leq \| U_1 \| + \| U_2 \| + \| U_3 \| + \| U_4 \| \leq \frac{\kappa_{13}(m, \lambda)}{c}
  \end{equation*}
  and completes the proof of \eqref{conv_M}.
\end{proof}

The next theorem is the main result in this section and basically a consequence of the resolvent formulae for $A_\eta$ and 
$-\Delta_\eta$ from Theorem~\ref{theorem_Krein} and Theorem~\ref{theorem_Schroedinger}, respectively, 
and the estimates in  Proposition \ref{proposition_limit_gamma_M}.

\begin{thm} \label{theorem_nonrelativistic_limit}
  Let $\eta \in \mathbb{R}$ 
  and let $A_\eta$ be the Dirac operator with an electrostatic
  $\delta$-shell interaction in Definition~\ref{definition_delta_op}. 
  Furthermore, denote by $-\Delta_\eta$ the Schr\"odinger operator with a $\delta$-interaction of
  strength $\eta$. Then, for any $\lambda \in \mathbb{C} \setminus \mathbb{R}$ 
  there exists a constant $\kappa = \kappa(m, \lambda, \eta)$ such that
  \begin{equation*}
    \left\| \big( A_\eta - (\lambda + m c^2) \big)^{-1}
        - \big( -\Delta_{\eta} - \lambda \big)^{-1} P_+ \right\| \leq \frac{\kappa}{c}.
  \end{equation*}
\end{thm}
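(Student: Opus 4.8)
The plan is to combine the two Krein-type resolvent formulae from Theorem~\ref{theorem_Krein} and Theorem~\ref{theorem_Schroedinger} with the four estimates collected in Proposition~\ref{proposition_limit_gamma_M}. Write, for $\lambda\in\mathbb C\setminus\mathbb R$ and $c$ large enough that $A_\eta$ is self-adjoint with $\lambda+mc^2\in\rho(A_0)\cap\rho(A_\eta)$,
\begin{equation*}
  \big(A_\eta-(\lambda+mc^2)\big)^{-1}=\big(A_0-(\lambda+mc^2)\big)^{-1}-\gamma(\lambda+mc^2)\big(I_4+\eta M(\lambda+mc^2)\big)^{-1}\eta\,\gamma\big(\overline\lambda+mc^2\big)^*,
\end{equation*}
and compare this, term by term, with
\begin{equation*}
  \big(-\Delta_\eta-\lambda\big)^{-1}P_+=\Big(-\tfrac{1}{2m}\Delta-\lambda\Big)^{-1}P_+-\widetilde\gamma(\lambda)\big(I+\eta\widetilde M(\lambda)\big)^{-1}\eta\,\widetilde\gamma\big(\overline\lambda\big)^*P_+.
\end{equation*}
The free-resolvent parts differ by $O(1/c)$ directly by \eqref{conv_res_free}. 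For the second terms, I would use the telescoping identity: a difference $A_1B_1C_1-A_2B_2C_2$ equals $(A_1-A_2)B_1C_1+A_2(B_1-B_2)C_1+A_2B_2(C_1-C_2)$, so it suffices to bound each factor and each factor-difference. Here $\|\gamma(\lambda+mc^2)-\widetilde\gamma(\lambda)P_+\|\le\kappa/c$ and $\|\gamma(\overline\lambda+mc^2)^*-\widetilde\gamma(\overline\lambda)^*P_+\|\le\kappa/c$ by \eqref{conv_gamma}--\eqref{conv_gamma_star}; all the $\gamma$, $\widetilde\gamma$, $\gamma^*$, $\widetilde\gamma^*$ factors are uniformly bounded in $c$ (the Dirac ones because the estimate \eqref{conv_gamma} together with boundedness of $\widetilde\gamma(\lambda)P_+$ gives a $c$-uniform bound); and $P_+$ commutes with the Schrödinger-side operators so the extra projection in, e.g., $\widetilde\gamma(\lambda)P_+(\cdot)P_+\widetilde\gamma(\overline\lambda)^*P_+$ can be inserted harmlessly using $P_+^2=P_+$.

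The one genuinely delicate point — and the step I expect to be the main obstacle — is controlling the middle factor, i.e. showing
\begin{equation*}
  \big\|\big(I_4+\eta M(\lambda+mc^2)\big)^{-1}\eta-\big(I+\eta\widetilde M(\lambda)\big)^{-1}\eta\,P_+\big\|\le\frac{\kappa}{c}.
\end{equation*}
By \eqref{conv_M} we have $\|M(\lambda+mc^2)-\widetilde M(\lambda)P_+\|\le\kappa/c$, but passing from closeness of $M$ to closeness of the inverses of $I_4+\eta M$ requires two things: a bound $\|(I_4+\eta M(\lambda+mc^2))^{-1}\|\le C$ uniform in $c$, and care with the fact that $\widetilde M(\lambda)P_+$ is not invertible on all of $L^2(\Sigma;\mathbb C^4)$ (it annihilates the lower components), so the ``inverse'' on the Schrödinger side is really $(I+\eta\widetilde M(\lambda))^{-1}\eta$ acting on the upper two components and $0$ on the lower two. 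I would handle the invertibility bound by writing $I_4+\eta M(\lambda+mc^2)=\big(I_4+\eta\widetilde M(\lambda)P_+\big)+\eta\big(M(\lambda+mc^2)-\widetilde M(\lambda)P_+\big)$; the first summand, in the block decomposition $\mathbb C^4=\mathbb C^2\oplus\mathbb C^2$, is block-diagonal with blocks $I_2+\eta\widetilde M(\lambda)$ (invertible by Theorem~\ref{theorem_Schroedinger}) and $I_2$, hence boundedly invertible with $c$-independent inverse norm, and the second summand has norm $\le|\eta|\kappa/c$, so a Neumann series gives the uniform bound for all sufficiently large $c$ and, simultaneously, the estimate
\begin{equation*}
  \big\|\big(I_4+\eta M(\lambda+mc^2)\big)^{-1}-\big(I_4+\eta\widetilde M(\lambda)P_+\big)^{-1}\big\|\le\frac{\kappa}{c}.
\end{equation*}
Finally one checks $\big(I_4+\eta\widetilde M(\lambda)P_+\big)^{-1}\eta\,\widetilde\gamma(\overline\lambda)^*P_+=P_+\big(I+\eta\widetilde M(\lambda)\big)^{-1}\eta\,\widetilde\gamma(\overline\lambda)^*$ (using the block structure and $P_+$ commuting appropriately), so that after reassembling, $\gamma(\lambda+mc^2)(I_4+\eta M(\lambda+mc^2))^{-1}\eta\,\gamma(\overline\lambda+mc^2)^*$ is within $O(1/c)$ of $\widetilde\gamma(\lambda)(I+\eta\widetilde M(\lambda))^{-1}\eta\,\widetilde\gamma(\overline\lambda)^*P_+$. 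Adding the contribution of the free resolvents completes the proof, with $\kappa=\kappa(m,\lambda,\eta)$ obtained by collecting the finitely many constants above; all manipulations are valid for $c$ exceeding some threshold depending on $\eta$ and $\lambda$, which is harmless since the claim is an asymptotic statement as $c\to\infty$.
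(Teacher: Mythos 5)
Your proposal is correct and follows essentially the same route as the paper: combine the two Krein formulae, apply the four $O(1/c)$ estimates of Proposition~\ref{proposition_limit_gamma_M}, perturb the invertibility of the middle factor, and reassemble using the block structure coming from $P_+$. The only place you diverge from the paper is cosmetic: the paper invokes \cite[Theorem~IV~1.16]{K95} for the stability of bounded invertibility, whereas you spell this out directly via the decomposition $I_4+\eta M(\lambda+mc^2)=\bigl(I_4+\eta\widetilde M(\lambda)P_+\bigr)+\eta\bigl(M(\lambda+mc^2)-\widetilde M(\lambda)P_+\bigr)$ and a Neumann series — same content, and your version has the advantage of making explicit why $I_4+\eta\widetilde M(\lambda)P_+$ is invertible with a $c$-independent bound (the block-diagonal structure with blocks $I_2+\eta\widetilde M(\lambda)$ and $I_2$), which the paper leaves implicit in its references to Lemma~\ref{lemma_inv2} and Theorem~\ref{theorem_Schroedinger}. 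One small caveat on presentation: the displayed goal $\bigl\|\bigl(I_4+\eta M(\lambda+mc^2)\bigr)^{-1}\eta-\bigl(I+\eta\widetilde M(\lambda)\bigr)^{-1}\eta\,P_+\bigr\|\le\kappa/c$ is \emph{not} true as stated (the two operators differ by $\eta$ on the lower $2\times2$ block, an $O(1)$ discrepancy), but this does not affect your argument since what you actually prove and use is the correct estimate $\bigl\|\bigl(I_4+\eta M(\lambda+mc^2)\bigr)^{-1}-\bigl(I_4+\eta\widetilde M(\lambda)P_+\bigr)^{-1}\bigr\|\le\kappa/c$, and the harmless discrepancy on the lower block is killed by the $P_+$ in the surrounding factors, exactly as you note at the end.
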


\begin{remark}
  For the special case $\eta = 0$ the convergence of the free Dirac operator to the free Laplace operator
  in the nonrelativistic limit is well-known, see e.g. \cite[Theorem~6.1]{T92}, where it is shown that the order of 
  convergence is $\frac{1}{c}$. Hence, the order of convergence in Theorem~\ref{theorem_nonrelativistic_limit}
  is optimal for general interaction strengths $\eta \in \mathbb{R}$.
\end{remark}

\begin{proof}[Proof of Theorem~\ref{theorem_nonrelativistic_limit}]
  First, recall that by Theorem \ref{theorem_Krein} the resolvent of $A_\eta$ is given by
  \begin{equation*}
    \begin{split}
      \big( A_\eta - (\lambda + m c^2) \big)^{-1} &= \big( A_0 - (\lambda + m c^2) \big)^{-1}\\
        &\quad - \gamma(\lambda + m c^2) \big( 1 + \eta M(\lambda + m c^2) \big)^{-1} 
        \eta \big(\gamma\big( \overline{\lambda} +  m c^2 \big) \big)^*.
    \end{split}
  \end{equation*}
  From Proposition \ref{proposition_limit_gamma_M} we know that there exists a constant $\kappa_1 = \kappa_1(m, \lambda)$
  such that
  \begin{gather*}
    \left\|\big( A_0 - (\lambda + m c^2) \big)^{-1} -
      \left( -\frac{1}{2 m} \Delta - \lambda \right)^{-1}  P_+ \right\| \leq \frac{\kappa_1}{c}; \\
    \big\| \gamma(\lambda + m c^2) - \widetilde{\gamma}(\lambda)  P_+ \big\| \leq \frac{\kappa_1}{c};\\
    \big\| \gamma\big(\overline{\lambda} + m c^2\big)^* 
        - \widetilde{\gamma}\big(\overline{\lambda}\big)^*  P_+\big\| \leq \frac{\kappa_1}{c}; \\
    \big\| M(\lambda + m c^2) - \widetilde{M}(\lambda)  P_+\big\| \leq \frac{\kappa_1}{c}. 
  \end{gather*}
  Since the operators $I_4 + \eta M(\lambda + m c^2)$ and $I_4 + \eta \widetilde{M}(\lambda)  P_+$ are boundedly 
  invertible, see Lemma \ref{lemma_inv2} and Proposition \ref{theorem_Schroedinger}, it follows from 
  \cite[Theorem IV 1.16]{K95} that 
  \begin{equation*}
    \left\| \big( I_4 + \eta M(\lambda + m c^2) \big)^{-1}
        - \big( I_4 + \eta \widetilde{M}(\lambda)  P_+ \big)^{-1} \right\| \leq \frac{\kappa_2}{c}
  \end{equation*}
  holds
  for some constant $\kappa_2 = \kappa_2(m, \lambda, \eta)$. Therefore, by using the resolvent formula for $-\Delta_\eta$
  from Proposition \ref{theorem_Schroedinger} we obtain
  \begin{equation*}
    \begin{split}
      \lim_{c \rightarrow \infty} \big( A_\eta &- (\lambda + m c^2) \big)^{-1} = 
          \lim_{c \rightarrow \infty} \Big[\big( A_0 - (\lambda + m c^2) \big)^{-1}\\
      & \qquad \qquad \qquad \qquad - \gamma(\lambda + m c^2) \big( I_4 + \eta M(\lambda + m c^2) \big)^{-1} 
          \eta \gamma\big( \overline{\lambda} + m c^2 \big)^*\Big] \\
      &= \left( -\frac{1}{2 m}\Delta - \lambda\right)^{-1}  P_+
           - \widetilde{\gamma}(\lambda)  P_+ \big( I_4 + \eta \widetilde{M}(\lambda)  P_+ \big)^{-1} \eta
           \widetilde{\gamma}\big( \overline{\lambda} \big)^*  P_+ \\
      &= \big( -\Delta_\eta - \lambda \big)^{-1}  P_+
    \end{split}
  \end{equation*}
 and the order 
 of convergence in the operator norm can be estimated by~$\frac{1}{c}$. 
 This completes the proof of Theorem~\ref{theorem_nonrelativistic_limit}.
\end{proof}

Finally, we show that for large $c$ and $-\eta>0$ sufficiently large the number of eigenvalues of $A_\eta$ 
in the gap $(-m c^2, m c^2)$ of $\sigma_{\text{ess}}(A_\eta)$
is big. 
The proof is based on Theorem~\ref{theorem_nonrelativistic_limit}
and a result from \cite{E03} on the spectrum of $-\Delta_\eta$.
In a similar way, one can derive also other results on the spectrum of $A_\eta$ from the well-known properties of $-\Delta_\eta$.

\begin{cor}\label{cor_large_eta}
  For any fixed $j \in \mathbb{N}$ there exists $\eta<0$ such that $\sharp \sigma_{\textup{d}}(A_\eta) \geq j$ 
  for all sufficiently large $c$.
\end{cor}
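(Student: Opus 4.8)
The plan is to derive the corollary from the norm resolvent convergence in Theorem~\ref{theorem_nonrelativistic_limit} together with the strong-coupling behaviour of $-\Delta_\eta$. First I would invoke the result of \cite{E03}: since $\Sigma$ is a compact $C^\infty$-surface one has $\sigma_{\mathrm{ess}}(-\Delta_\eta)=[0,\infty)$, and the number of eigenvalues of $-\Delta_\eta$ below $0$, counted with multiplicity, tends to $\infty$ as $\eta\to-\infty$. Hence, given $j\in\mathbb N$, one fixes $\eta<0$ for which $-\Delta_\eta$ has eigenvalues $\lambda_1,\dots,\lambda_N<0$ with $\sum_{k=1}^N\dim\ker(-\Delta_\eta-\lambda_k)\ge j$; each $\lambda_k$ is isolated in $\sigma(-\Delta_\eta)$ and of finite multiplicity. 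For such a fixed $\eta$ the operator $A_\eta$ is self-adjoint for all sufficiently large $c$ by Theorem~\ref{theorem_Krein}.

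Next I would pass to bounded operators. Fix $\lambda_0\in\mathbb C\setminus\mathbb R$ and put $B_c:=(A_\eta-(\lambda_0+mc^2))^{-1}$ and $B_\infty:=(-\Delta_\eta-\lambda_0)^{-1}P_+$; by Theorem~\ref{theorem_nonrelativistic_limit} one has $\|B_c-B_\infty\|\to 0$ as $c\to\infty$. One checks that $\sigma(B_\infty)=\{0\}\cup\{(\mu-\lambda_0)^{-1}:\mu\in\sigma(-\Delta_\eta)\}$, so the points $\zeta_k:=(\lambda_k-\lambda_0)^{-1}$ are non-zero, isolated in $\sigma(B_\infty)$, and the Riesz spectral projection $Q_\infty$ of $B_\infty$ associated with $\{\zeta_1,\dots,\zeta_N\}$ has rank at least $\sum_{k=1}^N\dim\ker(-\Delta_\eta-\lambda_k)\ge j$. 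I would then choose a bounded open set $U$ with $\{\zeta_1,\dots,\zeta_N\}\subset U$, $\overline U\cap\sigma(B_\infty)=\{\zeta_1,\dots,\zeta_N\}$, $0\notin\overline U$, and $\overline U$ so small that $W:=\{\lambda_0+\zeta^{-1}:\zeta\in\overline U\}$ is contained in a bounded neighbourhood of $\{\lambda_1,\dots,\lambda_N\}$ lying in $\{z:\mathrm{Re}\,z<0\}$, and set $\Gamma:=\partial U$. Since $\Gamma$ is a compact subset of $\rho(B_\infty)$ and $\|B_c-B_\infty\|\to 0$, for all large $c$ one has $\Gamma\subset\rho(B_c)$, the Riesz projections $Q_c:=\frac{1}{2\pi i}\oint_\Gamma(B_c-\zeta)^{-1}\,d\zeta$ are well defined and converge in norm to $Q_\infty$, and since $Q_c$ and $Q_\infty$ are idempotents, $\mathrm{rank}\,Q_c=\mathrm{rank}\,Q_\infty\ge j$ once $\|Q_c-Q_\infty\|<1$.

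Finally I would transfer this back to $A_\eta$: by the spectral mapping theorem for resolvents, $\mathrm{ran}\,Q_c$ is the spectral subspace of $A_\eta$ for the part of $\sigma(A_\eta)$ lying in $mc^2+W$. Since $W$ is bounded and contained in $\{z:\mathrm{Re}\,z<0\}$, for $c$ sufficiently large $mc^2+W$ is contained in $\{z:-mc^2<\mathrm{Re}\,z<mc^2\}$, so it meets $\sigma(A_\eta)$ only inside the gap $(-mc^2,mc^2)$, where the spectrum of $A_\eta$ is purely discrete by assertion~(i) of Theorem~\ref{theorem_Krein}. Hence $\mathrm{ran}\,Q_c$ is a direct sum of eigenspaces of $A_\eta$ for discrete eigenvalues in $(-mc^2,mc^2)$, and therefore $\sharp\sigma_{\mathrm{d}}(A_\eta)\ge\dim\mathrm{ran}\,Q_c\ge j$ for all sufficiently large $c$. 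The step I expect to be the main obstacle is precisely this last translation: one must ensure that the spectral subspace cut out by $\Gamma$ really consists of genuine discrete eigenvalues of $A_\eta$ situated inside the spectral gap (so that they count towards $\sharp\sigma_{\mathrm{d}}$), and that the eigenvalue $0$ of $B_\infty$, whose eigenspace is the infinite-dimensional $\ker P_+$, stays bounded away from the $\zeta_k$ and therefore does not contaminate the projection argument.
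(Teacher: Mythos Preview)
Your proposal is correct and follows essentially the same route as the paper: invoke \cite{E03} to produce enough negative eigenvalues of $-\Delta_\eta$, then use the norm resolvent convergence of Theorem~\ref{theorem_nonrelativistic_limit} together with stability of spectral projections to push these into the gap $(-mc^2,mc^2)$ of $A_\eta$. The only difference is presentational: the paper works directly with the spectral projections $E_{-\Delta_\eta P_+}((a,b))$ and $E_{A_\eta-mc^2}((a,b))$ for a real interval $(a,b)\subset(-\infty,0)$ containing $\sigma_{\mathrm d}(-\Delta_\eta)$, citing Weidmann's book for the fact that norm resolvent convergence (even to the resolvent of a self-adjoint relation, which is what $(-\Delta_\eta-\lambda)^{-1}P_+$ is) forces the ranks of the spectral projections to agree for large $c$; you instead spell out the Riesz projection argument at the level of the bounded operators $B_c$ and $B_\infty$, which is the concrete content behind that citation. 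Your explicit handling of the infinite-dimensional kernel of $P_+$ (keeping $0\notin\overline U$) is exactly the point the paper addresses by treating the limit as a self-adjoint relation rather than an operator.
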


\begin{proof}
  Note first that $\sigma_{\text{ess}}(-\Delta_\eta P_+) = \sigma_{\text{ess}}(-\Delta_\eta) \cup \{ 0 \} = [0, \infty)$
  and recall from \cite[Theorem~3.14]{BLL13} that $\sigma_{\text{d}}(-\Delta_\eta P_+) = \sigma_{\text{d}}(-\Delta_\eta)$ is finite.
  For $j \in \mathbb{N}$ fixed \cite[Theorem~2.1]{E03} yields $\sharp \sigma_{\text{d}}(-\Delta_\eta P_+) \geq j$ for some 
  $\eta<0$.
  Next, choose $a < b < 0$ with $\sigma_{\text{d}}(-\Delta_\eta) \subset (a, b)$
  and denote by $E_{-\Delta_\eta P_+}((a, b))$ and $E_{A_\eta - m c^2}((a, b))$
  the spectral projections of $-\Delta_\eta P_+$ and $A_\eta - m c^2$,
  respectively, corresponding to $(a, b)$.
  For $c\rightarrow\infty$ and $\lambda\in\mathbb C\setminus\mathbb R$ Theorem~\ref{theorem_nonrelativistic_limit} yields that the operators $(A_\eta - (\lambda+ m c^2))^{-1}$
  converge 
  to $(-\Delta_\eta - \lambda)^{-1} P_+$. The latter operator is the resolvent of a self-adjoint relation (multivalued operator) and hence
  it follows in the same way as in  	
  \cite[Satz~9.24\,b)]{W00} together with \cite[Satz~2.58\,a)]{W00}
  that for all sufficiently large $c$ 
  the dimensions of the ranges of $E_{-\Delta_\eta P_+}((a, b))$ and $E_{A_\eta - m c^2}((a, b))$ 
  coincide, i.e.
  \begin{equation*}
    \dim \ran E_{A_\eta - m c^2}((a, b)) = \dim \ran E_{-\Delta_\eta P_+}((a, b)) \geq j.
  \end{equation*} 
  Hence, $A_\eta$ has at least $j$ eigenvalues (counted with multiplicities) in the interval 
  $(a + m c^2, b + m c^2) \subset (-m c^2, m c^2)$ for sufficiently large $c$.
\end{proof}

\begin{appendix}
\section{Criteria for the boundedness of integral operators} \label{appa}

In this appendix we discuss the boundedness of integral operators for 
some special
integral kernels. The results are presented such that they can be applied directly
in the main part of the paper.
First we recall the Schur test, which is the 
abstract tool to prove these
results; cf. \cite[Example~III~2.4]{K95} or \cite[Satz~6.9]{W00}
for the case of scalar integral kernels.

\begin{prop} \label{proposition_Schur_test}
   Let $(X, \mu)$ and $(Y, \nu)$ be $\sigma$-finite measure spaces and
   let $t: X \times Y \rightarrow \mathbb{C}^{n \times n}$ be $\mu 
\times \nu$-measurable.
   Assume that there exist measurable functions $t_1, t_2: X \times Y 
\rightarrow [0, \infty)$
   satisfying $|t|^2 \leq t_1 t_2$ almost everywhere and constants 
$\kappa_1, \kappa_2 > 0$ such that
   \begin{equation*}
     \int_{X} t_1(x, y) \mathrm{d} \mu(x) \leq \kappa_1, \quad y \in Y,
     \quad \text{and} \quad
     \int_{Y} t_2(x, y) \mathrm{d} \nu(y) \leq \kappa_2, \quad x \in X.
   \end{equation*}
   Then the operator $T: L^2(Y; \nu; \mathbb{C}^n) \rightarrow L^2(X; 
\mu; \mathbb{C}^n)$,
\begin{equation*}
     T f(x) = \int_{Y} t(x, y) f(y) \mathrm{d} \nu(y), \quad x \in X,~ f 
\in L^2(Y; \nu; \mathbb{C}^n),
   \end{equation*}
   is everywhere defined and bounded with $\| T \|^2 \leq \kappa_1 \kappa_2$.
   In particular, if $(X, \mu) = (Y, \nu)$ and $t_1(x, y) = t_2(y, x)$ 
for all almost $x, y \in X$, then $\| T \| \leq \kappa_1$.
\end{prop}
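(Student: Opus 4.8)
The plan is to run the classical Schur test argument, adapted to the matrix-valued setting; the only new feature compared with the scalar case is a little bookkeeping with matrix and vector norms. First I would record the pointwise estimate: for $f\in L^2(Y;\nu;\mathbb{C}^n)$ and $x\in X$, using $|t(x,y)f(y)|\le|t(x,y)|\,|f(y)|$ (consistency of the matrix norm appearing in $|t|$ with the Euclidean norm on $\mathbb{C}^n$) together with the hypothesis $|t|^2\le t_1 t_2$, one gets
\begin{equation*}
  |Tf(x)| \le \int_Y |t(x,y)|\,|f(y)|\,\mathrm{d}\nu(y) \le \int_Y t_1(x,y)^{1/2}\,t_2(x,y)^{1/2}\,|f(y)|\,\mathrm{d}\nu(y).
\end{equation*}

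The key step is a single application of the Cauchy--Schwarz inequality in $L^2(Y;\nu)$, splitting the product so that the factor with a controlled $\nu$-integral carries no $f$:
\begin{equation*}
  |Tf(x)|^2 \le \left(\int_Y t_2(x,y)\,\mathrm{d}\nu(y)\right)\left(\int_Y t_1(x,y)\,|f(y)|^2\,\mathrm{d}\nu(y)\right) \le \kappa_2 \int_Y t_1(x,y)\,|f(y)|^2\,\mathrm{d}\nu(y).
\end{equation*}
Integrating this over $x\in X$ and interchanging the order of integration by Tonelli's theorem (all integrands are nonnegative and $\mu\times\nu$-measurable), the inner integral over $x$ is then bounded by $\kappa_1$, giving
\begin{equation*}
  \|Tf\|_{L^2(X;\mu;\mathbb{C}^n)}^2 \le \kappa_2 \int_Y |f(y)|^2 \left(\int_X t_1(x,y)\,\mathrm{d}\mu(x)\right)\mathrm{d}\nu(y) \le \kappa_1\kappa_2\,\|f\|_{L^2(Y;\nu;\mathbb{C}^n)}^2.
\end{equation*}
Since the right-hand side is finite for every $f\in L^2(Y;\nu;\mathbb{C}^n)$, the same chain of estimates shows that the integral defining $Tf(x)$ converges absolutely for $\mu$-almost every $x$, so $T$ is everywhere defined; and the displayed inequality yields $\|T\|^2\le\kappa_1\kappa_2$.

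For the final assertion, suppose $(X,\mu)=(Y,\nu)$ and $t_1(x,y)=t_2(y,x)$. Then $\int_Y t_2(x,y)\,\mathrm{d}\nu(y)=\int_Y t_1(y,x)\,\mathrm{d}\nu(y)\le\kappa_1$ by the first hypothesis with the variables exchanged, so the bound on $\|T\|^2$ holds with $\kappa_2$ replaced by $\kappa_1$, i.e.\ $\|T\|\le\kappa_1$. I do not anticipate a genuine obstacle here: the proof is a one-line Cauchy--Schwarz estimate followed by Tonelli, and the only points requiring a modicum of care are the choice of a matrix norm consistent with the Euclidean vector norm in the first display and the measurability hypotheses needed to apply Tonelli's theorem, both of which are routine.
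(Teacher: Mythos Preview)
Your argument is correct and is exactly the standard Schur test proof via Cauchy--Schwarz followed by Tonelli. The paper itself does not supply a proof of this proposition at all; it merely cites \cite[Example~III~2.4]{K95} and \cite[Satz~6.9]{W00} for the scalar case, so your write-up is in fact more detailed than what the paper provides.
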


In the following  the Schur test will be applied
in the cases that $X$ and $Y$
are either $\mathbb{R}^3$ equipped with the Lebesgue measure or $\Sigma$ (the boundary of a $C^\infty$-smooth bounded domain in $\mathbb R^3$) 
equipped with the
associated Hausdorff measure $\sigma$ and where the integral kernels
satisfy
$\mathcal{O}\big( |x - y|^{-s} \big)$ for small $x - y$ and some
suitable $s > 0$.
For that, we need the following integral estimates.

\begin{lem} \label{lemma_integral_estimates}
The following assertions {\rm (i)--(ii)} hold.
    \begin{itemize}
      \item[\rm (i)] Let $\kappa, R > 0$ and $s \in (0, 3)$ and define
      the function
      \begin{equation*}
        \tau(x) := \begin{cases} |x|^{-s},& ~ |x| < R, \\
                                  e^{-\kappa |x|},& ~ |x| \geq R, 
\end{cases}
      \end{equation*}
      for $x \in \mathbb{R}^3\setminus \{0\}$. Then there is a constant 
$K> 0$ such that for all $x \in
\mathbb{R}^3$
      \begin{equation*}
        \int_{\mathbb{R}^3} \tau(x - y) \mathrm{d} y \leq K.
      \end{equation*}
      \item[\rm (ii)] Let $s \in (0, 2)$. Then there is a constant $K$
such that for all $x \in \mathbb{R}^3$
      \begin{equation*}
        \int_\Sigma \big( 1 + |x - y|^{-s} \big) \mathrm{d} \sigma(y) \leq K.
      \end{equation*}
    \end{itemize}
\end{lem}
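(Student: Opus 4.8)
The plan is to treat the two assertions separately: part (i) by translation invariance and an elementary polar-coordinate computation, and part (ii) by first reducing an arbitrary $x\in\mathbb R^3$ to a point on $\Sigma$ and then exploiting compactness and smoothness of $\Sigma$ in local graph coordinates.

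For (i), since the Lebesgue measure is translation invariant the substitution $z=x-y$ gives $\int_{\mathbb R^3}\tau(x-y)\,\mathrm dy=\int_{\mathbb R^3}\tau(z)\,\mathrm dz$, which does not depend on $x$; hence it suffices to show this integral is finite. Splitting the domain into $\{|z|<R\}$ and $\{|z|\geq R\}$ and passing to polar coordinates yields
\[
  \int_{\mathbb R^3}\tau(z)\,\mathrm dz=4\pi\int_0^R r^{2-s}\,\mathrm dr+4\pi\int_R^\infty r^2e^{-\kappa r}\,\mathrm dr .
\]
The first term is finite because $2-s>-1$, i.e. $s<3$, and the second is finite by the exponential decay; their sum is the claimed constant $K$.

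For (ii), since $\Sigma$ is compact we have $\sigma(\Sigma)<\infty$, so $\int_\Sigma 1\,\mathrm d\sigma(y)=\sigma(\Sigma)$ is already bounded and it remains to bound $\int_\Sigma|x-y|^{-s}\,\mathrm d\sigma(y)$ uniformly in $x\in\mathbb R^3$. Fix $x$, put $d:=\dist(x,\Sigma)$, and choose $x^\ast\in\Sigma$ with $|x-x^\ast|=d$ (such a point exists by compactness). For every $y\in\Sigma$ one has $|x-y|\geq d$, hence $|x^\ast-y|\leq|x^\ast-x|+|x-y|=d+|x-y|\leq 2|x-y|$, so $|x-y|^{-s}\leq 2^s|x^\ast-y|^{-s}$ and therefore
\[
  \int_\Sigma|x-y|^{-s}\,\mathrm d\sigma(y)\leq 2^s\int_\Sigma|x^\ast-y|^{-s}\,\mathrm d\sigma(y).
\]
Thus it suffices to bound the right-hand side uniformly for $x^\ast\in\Sigma$. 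To this end I would cover $\Sigma$ by finitely many coordinate charts and, near each $x^\ast$, represent $\Sigma$ as a $C^\infty$ graph over its tangent plane at $x^\ast$; in the corresponding parametrization the surface measure is comparable to the two-dimensional Lebesgue measure and $|x^\ast-y|$ is comparable to the distance of the parameters. On $\Sigma\cap B(x^\ast,\delta)$ the integral is then controlled by a constant times $\int_{|u|<\delta}|u|^{-s}\,\mathrm du=2\pi\int_0^\delta\rho^{1-s}\,\mathrm d\rho$, which is finite exactly because $1-s>-1$, i.e. $s<2$; on $\Sigma\setminus B(x^\ast,\delta)$ the integrand is at most $\delta^{-s}$, so that part is bounded by $\delta^{-s}\sigma(\Sigma)$.

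The main obstacle is the uniformity in part (ii): one must verify that the radius $\delta$ of the graph neighbourhoods and the comparability constants between the surface measure and the planar Lebesgue measure (and between $|x^\ast-y|$ and the parameter distance) can be chosen independently of $x^\ast\in\Sigma$. This follows from the compactness of $\Sigma$ together with its $C^\infty$-regularity (in fact $C^{1,1}$ would already suffice), using a Lebesgue-number argument for the finite atlas. Everything else in the proof is an elementary computation.
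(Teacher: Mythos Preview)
Your argument for part (i) is essentially identical to the paper's: translation invariance followed by a split at $|z|=R$ and the observation that both pieces are finite for $s\in(0,3)$.

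For part (ii) your route is genuinely different. The paper does not project $x$ onto $\Sigma$; instead it distinguishes two cases. If $x$ lies outside a large ball containing $\Sigma$, then $|x-y|\geq 1$ for all $y\in\Sigma$ and the bound is trivial. If $x$ lies inside that ball, the paper uses a dyadic decomposition $A_n=\{y\in\Sigma:2^{-n}\leq |x-y|/R_1<2^{-n+1}\}$ together with the Ahlfors-regularity estimate $\sigma(B(x,\rho)\cap\Sigma)\leq k\rho^2$ (with $k$ independent of $x$), which turns the sum into a convergent geometric series for $s<2$. Your approach---reducing an arbitrary $x$ to its nearest point $x^\ast\in\Sigma$ via the inequality $|x-y|^{-s}\leq 2^s|x^\ast-y|^{-s}$, and then working in local graph coordinates---is correct and self-contained, but it trades one black box for another: you avoid quoting the Ahlfors bound, yet you must run a compactness/Lebesgue-number argument to make $\delta$ and the metric-comparability constants uniform in $x^\ast$. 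The paper's dyadic argument gets uniformity for free from the single constant $k$, and in fact works for any $x\in\mathbb R^3$ near $\Sigma$ without first projecting onto the surface. Both arguments ultimately encode the same two-dimensionality of $\Sigma$; the paper's is shorter to write down, while yours makes the local geometry more explicit.
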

\begin{proof}
    (i) For $x \in \mathbb{R}^3$ fixed the translation
invariance of the Lebesgue measure shows
    \begin{equation*}
      \begin{split}
        \int_{\mathbb{R}^3} \tau(x - y) \mathrm{d} y
            = \int_{\mathbb{R}^3} \tau(-y) \mathrm{d} y
            =  \int_{B(0, R)} |y|^{-s} \mathrm{d} y
            +\int_{\mathbb{R}^3 \setminus B(0, R)} e^{-\kappa |y|}
\mathrm{d} y,
      \end{split}
    \end{equation*}
   where the integrals on the right hand side are independent of $x$ and finite
for $s \in (0, 3)$.

    In order to prove (ii) fix again some $x \in \mathbb{R}^3$.
    It is clear that $\int_\Sigma 1 \mathrm{d}\sigma(y) = \sigma(\Sigma)$
    is finite independent of $x$. 
    Furthermore, since $\Sigma$ is compact there exists $R_1 > 0$ such that
$\Sigma \subset B(0, R_1-1)$.
    If $|x| > R_1$, then $|x - y| > 1$ for all $y \in \Sigma$ and therefore
    \begin{equation*}
      \int_\Sigma |x - y|^{-s} \mathrm{d} \sigma(y) \leq \int_\Sigma
\mathrm{d} \sigma(y)
          = \sigma(\Sigma).
    \end{equation*}

    If $|x| \leq R_1$, we need a slightly more sophisticated estimate which
follows the ideas of \cite[Proposition~A.4]{BEHL16}.
    Define
    \begin{equation*}
      A_n = \left\{ y \in \Sigma: 2^{-n} \leq |x - y|/R_1 < 2^{-n+1}
\right\}, \qquad n = 0, 1, 2, \dots,
    \end{equation*}
    so that $\Sigma = \overline{\bigcup_{n=0}^\infty
A_n}$. Moreover, for $y \in A_n$ we have
    \begin{equation*}
      |x - y|^{-s} \leq R_1^{-s} 2^{s n}
    \end{equation*}
    and hence
    \begin{equation*}
      \begin{split}
        \int_{\Sigma} |x - y|^{-s} \mathrm{d} \sigma(y)
        &= \sum_{n=1}^{\infty} \int_{A_n} |x - y|^{-s} \mathrm{d} \sigma(y)
        \leq \sum_{n=1}^{\infty} R_1^{-s} 2^{s n} \int_{A_n} \mathrm{d}
\sigma(y).
      \end{split}
    \end{equation*}
    Since $\Sigma$ is a smooth and bounded surface there is a constant $k
= k(\Sigma)> 0$ such that
    \begin{equation*}
      \sigma(B(x, \rho) \cap \Sigma) \leq k \rho^2
    \end{equation*}
    independent of $x \in \mathbb{R}^3$ and $\rho > 0$,
    cf. \cite[Chapter II, Example 3]{JW84}.
    Using the fact that $A_n \subset B(x, R_1 \cdot 2^{-n+1})$ it follows
that
    \begin{equation*}
      \begin{split}
        \int_{\Sigma} |x - y|^{-s} \mathrm{d} \sigma(y)
        & \leq \sum_{n=1}^{\infty} k R_1^{-s} 2^{s n} (R_1 \cdot 2^{-n+1})^2
        = 4 k R_1^{2-s} \sum_{n=1}^{\infty}  2^{(s-2) n}.
      \end{split}
    \end{equation*}
    Since $s \in (0, 2)$ the last sum is finite. Therefore, the claim is also true
    in the case $|x| \leq R_1$. The proof of Lemma~\ref{lemma_integral_estimates}~(ii) is complete.
\end{proof}

Finally, by applying the Schur test and the estimates from the previous
lemma, we can show that
integral operators with suitable integral kernels are
bounded and everywhere defined and we get estimates for their operator
norms.
The results are formulated such that they can be applied
directly in the main part of the paper.

\begin{prop} \label{proposition_integral_operators1}
    Let $t: \mathbb{R}^3 \rightarrow \mathbb{C}^{n\times n}$ be measurable 
    and assume that there exist positive constants $\kappa_1, \kappa_2$
and $R$ such that
    \begin{equation*}
      |t(x)| \leq \kappa_1 \begin{cases} |x|^{-2}, &~ |x| < R, \\
                                            e^{-\kappa_2 |x|},& ~ |x|
\geq R, \end{cases}
    \end{equation*}
    for $x \in \mathbb{R}^3 \setminus \{ 0 \}$. Then the operator $T: 
L^2(\mathbb{R}^3; \mathbb{C}^n) \rightarrow
L^2(\mathbb{R}^3; \mathbb{C}^n)$,
    \begin{equation*}
      T f(x) := \int_{\mathbb{R}^3} t(x - y) f(y) \mathrm{d} y, \quad x
\in \mathbb{R}^3, ~f \in L^2(\mathbb{R}^3; \mathbb{C}^n),
    \end{equation*}
    is everywhere defined and bounded with  $\| T \| \leq \kappa_1 K$ for some $K>0$.
\end{prop}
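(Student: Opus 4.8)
The plan is to apply the Schur test in Proposition~\ref{proposition_Schur_test} with $(X,\mu) = (Y,\nu)$ both equal to $\mathbb{R}^3$ equipped with the Lebesgue measure, and with the matrix-valued kernel $(x,y) \mapsto t(x-y)$, which is $\mu\times\nu$-measurable because $t$ is measurable. Let $\tau$ be the function from Lemma~\ref{lemma_integral_estimates}~(i) associated with the parameters $s = 2$, $\kappa_2$ and $R$; since $2 \in (0,3)$ the lemma applies and provides a constant $K > 0$ with $\int_{\mathbb{R}^3} \tau(x-y)\,\mathrm{d}y \le K$ for all $x \in \mathbb{R}^3$. Then I would set
\[
  t_1(x,y) := t_2(x,y) := \kappa_1\, \tau(x-y), \qquad x,y \in \mathbb{R}^3.
\]

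First I would verify the pointwise domination required in Proposition~\ref{proposition_Schur_test}: for $x \neq y$ the assumed bound on $t$ gives $|t(x-y)| \le \kappa_1 \tau(x-y)$, hence $|t(x-y)|^2 \le \kappa_1^2\, \tau(x-y)^2 = t_1(x,y)\,t_2(x,y)$ for almost all $(x,y)$. Next, using the translation invariance of the Lebesgue measure together with Lemma~\ref{lemma_integral_estimates}~(i),
\[
  \int_{\mathbb{R}^3} t_1(x,y)\,\mathrm{d}x = \kappa_1 \int_{\mathbb{R}^3} \tau(x-y)\,\mathrm{d}x \le \kappa_1 K, \qquad y \in \mathbb{R}^3,
\]
and symmetrically $\int_{\mathbb{R}^3} t_2(x,y)\,\mathrm{d}y \le \kappa_1 K$ for all $x \in \mathbb{R}^3$. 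Since $\tau$ is radial we have $t_1(x,y) = \kappa_1\tau(x-y) = \kappa_1\tau(y-x) = t_2(y,x)$, so the final assertion of Proposition~\ref{proposition_Schur_test} yields that $T$ is everywhere defined and bounded with $\|T\| \le \kappa_1 K$.

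There is essentially no genuine obstacle here; the only point that deserves a moment of care is that the singularity $|x|^{-2}$ of $t$ must be split evenly between $t_1$ and $t_2$, so that each of the two one-variable integrals involves only an $|x-y|^{-2}$ weight, which is integrable over $\mathbb{R}^3$ near the diagonal, rather than attempting to integrate $|t|$ directly against an $|x-y|^{-4}$ weight. This is precisely why the Schur test in Proposition~\ref{proposition_Schur_test} is stated with the product bound $|t|^2 \le t_1 t_2$, and it makes the argument go through with the choice $s = 2$ in Lemma~\ref{lemma_integral_estimates}~(i).
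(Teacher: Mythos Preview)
Your proof is correct and follows essentially the same route as the paper: define the majorant $\tau$ with the $|x|^{-2}$ and $e^{-\kappa_2|x|}$ pieces, set $t_1(x,y)=t_2(x,y)=\kappa_1\tau(x-y)$, invoke Lemma~\ref{lemma_integral_estimates}~(i) to bound the one-variable integrals by $\kappa_1 K$, and conclude via the symmetric case of the Schur test. The only cosmetic difference is that the paper absorbs $\kappa_1$ into $\tau$ rather than multiplying it in afterwards.
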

\begin{proof}
    We define for $x \in \mathbb{R}^3 \setminus \{ 0 \}$
    \begin{equation*}
      \tau(x) := \kappa_1 \begin{cases} |x|^{-2}, &~ |x| < R, \\
                                        e^{-\kappa_2 |x|},& ~ |x| \geq R,
\end{cases}
    \end{equation*}
    and $t_1(x, y) = t_2(x, y) := \tau(x - y)$ for $x, y \in \mathbb{R}^3$.
    Then, it follows from Lemma~\ref{lemma_integral_estimates}~(i) that
there exists a constant $K$ such that
    \begin{equation*}
      \int_{\mathbb{R}^3} t_1(x, y) \mathrm{d} x
          = \int_{\mathbb{R}^3} \tau(x - y) \mathrm{d} x \leq \kappa_1 K
    \end{equation*}
    for almost every $y \in \mathbb{R}^3$. Hence, the Schur test
(Proposition~\ref{proposition_Schur_test})
    implies that $T$ is bounded and everywhere defined and that $\| T \|
\leq \kappa_1 K$ holds.
\end{proof}

\begin{prop} \label{proposition_integral_operators2}
    Let $t: \mathbb{R}^3 \rightarrow \mathbb{C}^{n \times n}$
    be measurable and assume that there exist positive constants $\kappa_1, \kappa_2$
and $R$ such that
    \begin{equation*}
      |t(x)| \leq \kappa_1 \begin{cases} |x|^{-2},& ~ |x| < R, \\
                                         e^{-\kappa_2 |x|},& ~ |x| \geq
R, \end{cases}
    \end{equation*}
    for $x \in \mathbb{R}^3 \setminus \{ 0 \}$. Then the operators
    $T_1: L^2(\mathbb{R}^3; \mathbb{C}^n) \rightarrow L^2(\Sigma; 
\mathbb{C}^n)$,
    \begin{equation*}
      T_1 f(x) := \int_{\mathbb{R}^3} t(x - y) f(y) \mathrm{d} y, \quad x
\in \Sigma, ~f \in L^2(\mathbb{R}^3; \mathbb{C}^n),
    \end{equation*}
    and $T_2: L^2(\Sigma; \mathbb{C}^n) \rightarrow L^2(\mathbb{R}^3; 
\mathbb{C}^n)$,
    \begin{equation*}
      T_2 \varphi(x) := \int_\Sigma t(x - y) \varphi(y) \mathrm{d}
\sigma(y), \quad x \in \mathbb{R}^3, ~\varphi \in L^2(\Sigma; \mathbb{C}^n),
    \end{equation*}
    are everywhere defined and bounded with 
    $\| T_1 \|, \| T_2 \| \leq \kappa_1 K$ for some $K>0$.
\end{prop}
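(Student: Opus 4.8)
The plan is to deduce both bounds from the Schur test in Proposition~\ref{proposition_Schur_test}, combined with the integral estimates of Lemma~\ref{lemma_integral_estimates}. The essential---and essentially only---difficulty is that near the diagonal one only has $|t(x-y)|^2\leq\kappa_1^2|x-y|^{-4}$, and although $|x-y|^{-4}$ factors trivially as $|x-y|^{-2}\cdot|x-y|^{-2}$, the function $z\mapsto|z|^{-2}$ fails to be integrable over the two-dimensional surface $\Sigma$ near $z=0$, this being the excluded endpoint $s=2$ in Lemma~\ref{lemma_integral_estimates}~(ii). A symmetric Schur splitting is therefore unavailable, and the singularity has to be distributed unevenly between the two factors, placing the milder singularity on the side that is integrated over $\Sigma$ and the stronger one on the side integrated over $\mathbb{R}^3$.

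Concretely, I would set $s_1:=\tfrac{3}{2}$ and $s_2:=\tfrac{5}{2}$, so that $s_1\in(0,2)$, $s_2\in(0,3)$ and $s_1+s_2=4$, and for $s\in(0,3)$ introduce the auxiliary function
\begin{equation*}
  \tau_s(x):=\begin{cases}|x|^{-s},&|x|<R,\\ e^{-\kappa_2|x|},&|x|\geq R,\end{cases}\qquad x\in\mathbb{R}^3\setminus\{0\}.
\end{equation*}
Since the hypothesis reads $|t(x)|\leq\kappa_1\tau_2(x)$ and $\tau_2^2=\tau_{s_1}\tau_{s_2}$ identically, one obtains the pointwise estimate
\begin{equation*}
  |t(x-y)|^2\leq\kappa_1^2\,\tau_{s_1}(x-y)\,\tau_{s_2}(x-y),\qquad x\neq y.
\end{equation*}

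To treat $T_1$ I would apply Proposition~\ref{proposition_Schur_test} with $(X,\mu)=(\Sigma,\sigma)$, $(Y,\nu)=(\mathbb{R}^3,\mathrm{d}y)$, and with $t_1(x,y):=\kappa_1\tau_{s_1}(x-y)$, $t_2(x,y):=\kappa_1\tau_{s_2}(x-y)$, so that $|t|^2\leq t_1t_2$. Since $s_1\in(0,2)$, Lemma~\ref{lemma_integral_estimates}~(ii) provides a constant $K_1$ with $\int_\Sigma t_1(x,y)\,\mathrm{d}\sigma(x)\leq\kappa_1 K_1$ for every $y\in\mathbb{R}^3$; since $s_2\in(0,3)$, Lemma~\ref{lemma_integral_estimates}~(i) provides a constant $K_2$ with $\int_{\mathbb{R}^3}t_2(x,y)\,\mathrm{d}y\leq\kappa_1 K_2$ for every $x\in\Sigma$. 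The Schur test then shows that $T_1$ is everywhere defined and bounded with $\|T_1\|\leq\kappa_1\sqrt{K_1 K_2}$. For $T_2$ I would argue identically after exchanging the two measure spaces, i.e. with $(X,\mu)=(\mathbb{R}^3,\mathrm{d}x)$, $(Y,\nu)=(\Sigma,\sigma)$, $t_1(x,y):=\kappa_1\tau_{s_2}(x-y)$ and $t_2(x,y):=\kappa_1\tau_{s_1}(x-y)$; here $\int_{\mathbb{R}^3}t_1(x,y)\,\mathrm{d}x\leq\kappa_1 K_2$ holds uniformly in $y\in\Sigma$ by Lemma~\ref{lemma_integral_estimates}~(i), and $\int_\Sigma t_2(x,y)\,\mathrm{d}\sigma(y)\leq\kappa_1 K_1$ holds uniformly in $x\in\mathbb{R}^3$ by Lemma~\ref{lemma_integral_estimates}~(ii), whence $\|T_2\|\leq\kappa_1\sqrt{K_1 K_2}$ as well. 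Setting $K:=\sqrt{K_1 K_2}$ then gives the claim. Apart from the asymmetric choice of the exponents $s_1,s_2$, every step is a direct citation of the Schur test and of Lemma~\ref{lemma_integral_estimates}, so I do not anticipate any further obstacle.
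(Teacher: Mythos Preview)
Your proof is correct and follows essentially the same route as the paper's: an asymmetric Schur splitting $|t|^2\leq t_1t_2$ with the exponent $<2$ on the $\Sigma$-side and the exponent $<3$ on the $\mathbb{R}^3$-side, then Lemma~\ref{lemma_integral_estimates} and Proposition~\ref{proposition_Schur_test}. The only cosmetic differences are that the paper writes the exponents as $2\mp s$ for a generic $s\in(0,1)$ rather than fixing $s_1=\tfrac32$, $s_2=\tfrac52$, replaces the piecewise $\tau_{s_1}$ by the pure power $\kappa_3|x|^{-(2-s)}$ (absorbing the exponential tail into a constant), and deduces the bound for $T_2$ from that for $T_1$ by duality instead of rerunning the argument.
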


\begin{proof}
    We prove the statement for the operator $T_1$, the claim for $T_2$
follows then by taking adjoints.
    Let us define for an $s \in (0, 1)$ and $x \in \mathbb{R}^3 
\setminus \{ 0 \}$
    \begin{equation*}
      \tau_1(x) := \kappa_1 \kappa_3 |x|^{-2+s}
    \end{equation*}
    and
    \begin{equation*}
      \tau_2(x) := \kappa_1 \begin{cases} |x|^{-2-s},&~ |x| < R, \\
                                          e^{-\kappa_2 |x|},&~ |x| \geq R,
                   \end{cases}
    \end{equation*}
    where the constant $\kappa_3$ is chosen such that $e^{-\kappa_2 |x|}
\leq \kappa_3 |x|^{-2+s}$
    for $|x| \geq R$. Set $t_j(x, y) := \tau_j(x - y)$ for
$j \in \{ 1, 2 \}$ and $x \in \Sigma,~ y \in \mathbb{R}^3$,
    and note that the estimate 
    $|t(x-y)|^2 \leq t_1(x, y) t_2(x,y)$ holds for almost all $x, y$.
    By applying Lemma~\ref{lemma_integral_estimates}~(ii) we
see that there is a constant $K_1$ such that 
    \begin{equation*}
      \int_\Sigma t_1(x, y) \mathrm{d} \sigma(x) = \int_\Sigma \tau_1(x -
y) \mathrm{d} \sigma(x) \leq \kappa_1 K_1
    \end{equation*}
    for almost all $y \in \mathbb{R}^3$. Similarly,
Lemma~\ref{lemma_integral_estimates}~(i) implies that 
    \begin{equation*}
      \int_{\mathbb{R}^3} t_2(x, y) \mathrm{d} y = \int_{\mathbb{R}^3}
\tau_2(x - y) \mathrm{d} y \leq \kappa_1 K_2
    \end{equation*}
    is true for almost all $x \in \Sigma$ and a constant $K_2$.
    Therefore, Proposition~\ref{proposition_Schur_test}
    yields the assertions for $T_1$.
\end{proof}

\begin{prop} \label{proposition_integral_operators3}
    Let $t: \mathbb{R}^3 \rightarrow \mathbb{C}^{n \times n}$ be measurable
    and assume that there exists a constant $\kappa > 0$ such that
    \begin{equation*}
      |t(x)| \leq \kappa \big( 1 + |x|^{-1} \big)
    \end{equation*}
    for $x \in \mathbb{R}^3 \setminus \{ 0 \}$. Then, the operator $T: 
L^2(\Sigma; \mathbb{C}^n) \rightarrow L^2(\Sigma; \mathbb{C}^n)$,
    \begin{equation*}
      T \varphi(x) := \int_{\Sigma} t(x - y) \varphi(y) \mathrm{d}
\sigma(y), \quad x \in \Sigma, ~\varphi \in L^2(\Sigma; \mathbb{C}^n),
    \end{equation*}
    is everywhere defined and bounded with 
    $\| T \| \leq \kappa K$ for some $K>0$.
\end{prop}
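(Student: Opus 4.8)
The plan is to deduce this from the Schur test (Proposition~\ref{proposition_Schur_test}) applied on $X=Y=\Sigma$ equipped with the Hausdorff measure $\sigma$, with the symmetric splitting obtained from the radial majorant of $|t|$. First I would introduce $\tau(x):=\kappa\bigl(1+|x|^{-1}\bigr)$ for $x\in\mathbb R^3\setminus\{0\}$, so that the hypothesis on $t$ gives $|t(x-y)|\le\tau(x-y)$ and hence $|t(x-y)|^2\le\tau(x-y)^2$ for $\sigma\times\sigma$-almost all $x,y\in\Sigma$. Setting $t_1(x,y):=\tau(x-y)=:t_2(x,y)$ for $x,y\in\Sigma$, the pointwise domination hypothesis $|t|^2\le t_1t_2$ required by Proposition~\ref{proposition_Schur_test} is satisfied.

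Next I would estimate the row and column integrals. Since $\tau$ depends only on $|x-y|$ and $\Sigma$ is a compact $C^\infty$-smooth surface, Lemma~\ref{lemma_integral_estimates}~(ii) with the choice $s=1\in(0,2)$ provides a constant $K>0$, independent of the base point, such that
\begin{equation*}
  \int_\Sigma t_1(x,y)\,\mathrm d\sigma(x)=\kappa\int_\Sigma\bigl(1+|x-y|^{-1}\bigr)\,\mathrm d\sigma(x)\le\kappa K,\qquad y\in\Sigma,
\end{equation*}
and, by the same computation with the roles of $x$ and $y$ interchanged, $\int_\Sigma t_2(x,y)\,\mathrm d\sigma(y)\le\kappa K$ for all $x\in\Sigma$. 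Thus the assumptions of the Schur test hold with $\kappa_1=\kappa_2=\kappa K$, and it already follows that $T$ is everywhere defined and bounded with $\|T\|^2\le(\kappa K)^2$.

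To obtain the sharper bound stated in the proposition I would finally invoke the last sentence of Proposition~\ref{proposition_Schur_test}: here $(X,\mu)=(Y,\nu)=(\Sigma,\sigma)$ and, because $\tau$ is radial, $t_1(x,y)=\tau(x-y)=\tau(y-x)=t_2(y,x)$ for all $x,y\in\Sigma$, so the Schur test yields $\|T\|\le\kappa_1=\kappa K$, which is exactly the asserted estimate. I do not expect any genuine obstacle in this argument; the only two points deserving a moment of care are verifying that the singularity exponent $s=1$ lies in the admissible range $(0,2)$ of Lemma~\ref{lemma_integral_estimates}~(ii) (this is where the weak singularity of the kernel is absorbed by the surface measure), and using the radial symmetry of the majorant to pass from the generic bound $\sqrt{\kappa_1\kappa_2}$ to the clean constant $\kappa K$.
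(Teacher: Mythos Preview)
Your proposal is correct and follows essentially the same approach as the paper: define the radial majorant $\tau(x)=\kappa(1+|x|^{-1})$, set $t_1=t_2=\tau(x-y)$, invoke Lemma~\ref{lemma_integral_estimates}~(ii) with $s=1$ to bound the Schur integrals by $\kappa K$, and conclude via the symmetric case of Proposition~\ref{proposition_Schur_test}. The paper's proof is slightly terser (it does not spell out the radial-symmetry check $t_1(x,y)=t_2(y,x)$ or the choice $s=1$), but the argument is the same.
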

\begin{proof}
    We define the functions
    \begin{equation*}
      \tau(x) := \kappa \big( 1 + |x|^{-1}\big), \quad x \in \mathbb{R}^3 \setminus \{ 0 \},
    \end{equation*}
    and $t_1(x, y) = t_2(x, y) := \tau(x - y)$ for $x, y \in \Sigma$.
    Lemma~\ref{lemma_integral_estimates}~(ii) shows that there is a
constant $K>0$ such that
    \begin{equation*}
      \int_{\Sigma} t_1(x, y) \mathrm{d} \sigma(x)
      = \int_{\Sigma} \tau(x - y) \mathrm{d} \sigma(x) \leq \kappa K
    \end{equation*}
    for almost every $y \in \Sigma$. Hence
Proposition~\ref{proposition_Schur_test} implies the statement. 
\end{proof}

\end{appendix}

\vskip 0.8cm
\noindent {\bf Acknowledgments.}  
Jussi Behrndt and Markus Holzmann gratefully
acknowledge financial support
by the Austrian Science Fund (FWF): Project P~25162-N26.
Pavel Exner and Vladimir Lotoreichik gratefully
acknowledge financial support
by the Czech Science Foundation (GA\v{C}R): Project 14-06818S.




\end{document}